\documentclass{article}
\usepackage{amsmath}
\usepackage{amssymb}
\usepackage{amsthm}
\usepackage{tikz}
\usepackage{hyperref}

\numberwithin{equation}{section}
\newtheorem{theorem}[equation]{Theorem}
\newtheorem{lemma}[equation]{Lemma}
\newtheorem{corollary}[equation]{Corollary}

\newcommand{\ds}{\displaystyle}

\title{The relative isoperimetric profile of a unit square with a central square removed}
\author{Kyle Byassee, Hunter Johnson, Brandon Jones, Ezra Nance}
\date{}

\begin{document}
\maketitle
\begin{abstract}
    We calculate the exact values of the relative isoperimetric profile for a unit square with a central square removed. In other words, given a removed central square of a fixed size, we provide a function which produces the smallest relative perimeter which bounds any possible area within this shape.
\end{abstract}

\section{Introduction}
Let $Q\subseteq \mathbb{R}^2$ be a measurable subset with non-zero measure, $|Q|$. For any measurable subset $S\subseteq Q$, we let $P(S)$ denote the \textit{relative perimeter} of $S$, that is, the perimeter of $S$ which belongs to the interior of $Q$.  For each non-negative number $A\leq |Q|$, we define $f_{Q}(A) := \inf \{P(S): S\subseteq Q \text{ and } |S| = A\}$.  The function $f_Q$ is called the \textit{relative isoperimetric profile} of $Q$, and any subset $S$ which achieves this infimum is called an \textit{isoperimetric minimizer}. Note for any minimizer $S \subseteq Q$, $f_Q(|S|) = f_Q(|S'|)$, where $S' \subseteq Q$ is the complement of $S$. Since $f_Q$ has this symmetry, we will only ever consider the profile on half of its domain.

In \cite{BB} the relative isoperimetric profile was determined for $Q = [0,1]^2$. In \cite{DDNN}, this result was generalized, and $f_Q(A)$ was determined for the 1-parameter family $Q_a = [0,1]^2 \setminus [0,a)^2$. In this work, we generalize \cite{BB} in a different way by determining the relative isoperimetric profile for the 1-parameter family $Q_a~=~[0,1]^2 \setminus (\frac{1}{2}(1-a),\frac{1}{2}(1+a))^2$, see Figure \ref{Q_a}. In Section \ref{sec:MainTheorem} we provide the relative isoperimetric profile of $Q_a$ in Theorems \ref{theorem:Region_5}, \ref{theorem:Region_4}, \ref{theorem:Region_3}, \ref{theorem:Region_2}, and \ref{theorem:Region_1}. The profile is split up based on the value of $a \in (0,1)$.
\begin{figure}[ht]
    \centering
    \begin{tikzpicture}[scale=2]
        \draw[ultra thick] (0,0) rectangle (1,1);
        \filldraw[ultra thick,fill=gray] (0.3,0.3) rectangle (0.7,0.7);
        \draw[dashed] (0.8, 0.7) -- (1.3,0.7);
        \draw[dashed] (0.8, 0.3) -- (1.3,0.3);
        \draw (1.2,0.5) node {$a$};
        \draw[dashed] (0.3, 0.2) -- (0.3,-0.3);
        \draw[dashed] (0.7, 0.2) -- (0.7,-0.3);
        \draw (0.5,-0.2) node {$a$};
    \end{tikzpicture}
    \caption{The 1-parameter family of shapes considered, $Q_a$}
    \label{Q_a}
\end{figure}

To find these isoperimetric minimizers, we must consider all possible subsets, $S \subseteq Q_a$. That is to say, our search space is infinite. Much work has been done to address this issue, both for general shapes in any dimension and for polygonal shapes in two dimensions. We have combined several results into the following theorem. The first bullet was proven by Gonzalez, Massari, and Tamanini \cite[Theorem 2, pg. 29]{GMT}, the second two were proven by Grüter \cite[Theorem (iii) pg. 264]{Gr}, and the last three were proven by DeVito, DeYeso, Nance, and Niedzialomski \cite[Theorem 1.1 and Proposition 2.2]{DDNN}.

\begin{theorem}\label{thm:backgroundProperties}
    Let $Q \subseteq \mathbb{R}^2$ be a bounded polygonal domain, and let $S \subseteq Q$ be a relative isoperimetric minimizer. Then
    \begin{itemize}
        \item $\partial S \cap \operatorname{int}(Q)$ is an embedded analytic submanifold of $Q$,
        \item $\partial S \cap \operatorname{int}(Q)$ is a disjoint union of curves, all of which have the same constant mean curvature,
        \item $\partial S$ intersects the non-corner points of $Q$ orthogonally,
        \item $\partial S$ never intersects a convex corner of $Q$,
        \item $\partial S$ intersects any non-convex corner of $Q$ at most one time,
        \item $\partial S$ has finitely many components.
    \end{itemize}
\end{theorem}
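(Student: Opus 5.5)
This theorem collects results from the literature, so the plan is to reduce each bullet to the cited source and check that its hypotheses hold for a bounded polygonal domain $Q\subseteq\mathbb{R}^2$. We also need a minimizer to exist, so we would begin there. We take a minimizing sequence $S_k$ with $|S_k|=A$ and $P(S_k)\to f_Q(A)$. Since $Q$ is bounded and Lipschitz, the relative perimeters and the $BV$ norms of the characteristic functions are bounded. Compactness in $BV$ then gives an $L^1$-convergent subsequence. The area constraint passes to the limit because $Q$ is bounded, and lower semicontinuity of relative perimeter shows the limit is a minimizer.

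For the first bullet we would invoke Gonzalez--Massari--Tamanini \cite[Theorem 2]{GMT}. In dimension $2$ it says that the interior boundary of a volume-constrained perimeter minimizer is analytic, with no singular set. This rests on the almost-minimality of $S$ in $\operatorname{int}(Q)$: away from $\partial Q$, a local variation of area can be compensated at a far-away regular point at linear cost in perimeter. A first-variation argument with area-preserving deformations then shows each component has the same constant curvature, equal to the Lagrange multiplier, which gives the second bullet. For the third bullet we would apply Grüter's boundary regularity \cite{Gr} at each flat, non-corner boundary point. The closure of the free boundary is $C^{1,\alpha}$ up to the edge there, and the first variation with tangential boundary vector fields forces a $90^\circ$ contact angle. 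The curves are arcs or segments of equal curvature, which with finiteness gives the structure.

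The last three bullets are the polygon-specific statements of \cite[Theorem 1.1, Proposition 2.2]{DDNN}. The convex-corner exclusion would go by comparison. Suppose $\partial S$ hits a corner of interior angle $\theta<\pi$. Since the boundary is a constant-curvature arc, cutting a small arc of length $\varepsilon$ near the corner changes perimeter at first order with a coefficient depending on the angle mismatch. Rebalancing the area elsewhere costs only $O(\varepsilon^2)$, so the original curve could not have been minimal. At a non-convex corner, two boundary arcs meeting there could be merged or smoothed to decrease perimeter, which gives at most one intersection. Finiteness of the components would come from a lower bound on each component's length, or enclosed area, in terms of the curvature, together with the finite total perimeter. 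Arcs too short near the boundary can be removed, with the lost area reinserted, while lowering perimeter.

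The main obstacle is the corner analysis. Regularity theory degenerates at non-smooth boundary points, so we must first obtain a blow-up or tangent-cone description of $S$ near a corner before the comparison arguments apply. We would first try to show the blow-up is a cone in the sector that minimizes relative perimeter, and then classify such cones by angle.
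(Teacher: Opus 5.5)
Your plan matches the paper, which does not prove this theorem but assembles it from \cite{GMT} (first bullet), \cite{Gr} (second and third bullets), and \cite{DDNN} (last three bullets); these are the sources you invoke for the same items, apart from your first-variation derivation of the second bullet. Your existence step is superfluous, since $S$ is assumed to be a minimizer, and the blow-up analysis you call the main obstacle is not needed in the plane: each interior component is a circular arc or segment of the common curvature, so it has a tangent at any corner it reaches, and the elementary cut-the-corner comparisons apply directly.
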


Throughout Section \ref{sec:region_elimination}, we utilize these results to reduce our search space down to something manageable. In so doing, we are able to eliminate all possible subsets of $Q_a$ from being minimizers other than six collections of subsets, see Figure \ref{fig:sixMinRegions}. In Section \ref{sec:MinRegions}, we discuss the properties of these six collections of regions, with special attention given to two region types which are bound by two arcs. Then in Section \ref{sec:MainTheorem}, we provide the minimums of the six region types.

\section{Region Elimination}\label{sec:region_elimination}
In this section we focus on eliminating region types from being isoperimetric minimizers. We begin with a proof that for any region $S \subseteq Q_a$, the boundary $\partial S$ is composed of at most $2$ components. 

\subsection{Easy Bounds}
\begin{lemma}\label{lem:boundedBy1}
    For all $a \in (0,1)$, the isoperimetric profile function $f_a$ is bounded above by $1$.
\end{lemma}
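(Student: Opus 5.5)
To prove Lemma~\ref{lem:boundedBy1}, we will exhibit, for each $A\in[0,|Q_a|/2]$, an explicit subset $S\subseteq Q_a$ with $|S|=A$ and $P(S)\le 1$. Since $f_a$ is an infimum, this gives $f_a(A)\le 1$. The symmetry $f_a(A)=f_a(|Q_a|-A)$ then extends the bound to the whole domain. Alternatively, the construction below covers all $A\in[0,|Q_a|]$ directly, which avoids invoking the symmetry at all.

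The natural family is the vertical strips $S_t=Q_a\cap([0,t]\times[0,1])$ for $t\in[0,1]$. Their relative perimeter is the part of the segment $\{t\}\times[0,1]$ lying in $\operatorname{int}(Q_a)$. The steps are as follows.

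\begin{enumerate}
\item Compute $P(S_t)$:
\begin{itemize}
\item If $t\notin[\tfrac12(1-a),\tfrac12(1+a)]$, the whole vertical segment of length $1$ lies in $Q_a$, so $P(S_t)=1$.
\item If $t$ lies in the open interval $(\tfrac12(1-a),\tfrac12(1+a))$, the middle portion of length $a$ of the segment lies in the removed square, so $P(S_t)=1-a<1$.
\item At the two endpoint values of $t$, part of the segment runs along the edge of the hole. That part is boundary of $Q_a$, not interior, so again $P(S_t)\le 1$.
\end{itemize}
\item Show the area is continuous and increasing. The map $t\mapsto |S_t|=\int_0^t \ell(s)\,ds$, where $\ell(s)$ is the length of $Q_a\cap(\{s\}\times[0,1])$, is continuous and nondecreasing. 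It runs from $0$ at $t=0$ to $|Q_a|=1-a^2$ at $t=1$, and it is strictly increasing because $\ell\ge 1-a>0$.
\item Apply the intermediate value theorem: every $A\in[0,|Q_a|]$ equals $|S_t|$ for some $t$, hence $f_a(A)\le P(S_t)\le 1$.
\end{enumerate}

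There is no real obstacle here. The only point needing care is the convention that boundary of $S$ lying along $\partial Q_a$, including the edges of the central hole, is not counted in $P(S)$. With that convention, the strip perimeters are exactly $1$ or $1-a$, as computed above.
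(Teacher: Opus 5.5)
Your proof is correct and is essentially the paper's argument. For $t<\frac12(1-a)$ your strip $S_t$ is the Full Line region, and for $t\in[\frac12(1-a),\frac12(1+a)]$ it is the Opposite Lines region with two segments of length $\frac12(1-a)$; the paper uses exactly these on $[0,\frac12(1-a))$ and $[\frac12(1-a),\frac12(1-a^2)]$. The only difference is that you handle the area bookkeeping through continuity and the intermediate value theorem, and the degenerate strips at $t=0,1$ have relative perimeter $0$ rather than $1$, which is harmless.
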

\begin{proof}
    Consider an area $A \in \left[0,\frac{1}{2}(1-a)\right)$.
    Then we can bound this area with a vertical line segment of length 1.
    Now consider an area,
    \[A \in \left[\frac{1}{2}(1-a), \frac{1}{2}(1-a^2)\right].\]
    This area can be bound by two vertical line segments, each with a length of $\frac{1}{2}(1-a)$.
    Thus, the area can be bound with a perimeter of $1-a$, which is less than $1$.
\end{proof}

\begin{lemma}\label{lem:boundedBy1MinusA}
    For all $a \in [\frac{1}{3},1)$, the isoperimetric profile function $f_a$ is bounded above by $1-a$.
\end{lemma}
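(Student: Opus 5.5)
We need $f_a(A)\le 1-a$ for all $A\in[0,\tfrac12(1-a^2)]$ when $a\ge\tfrac13$. By the complement symmetry noted in the introduction, this half of the domain suffices. As in the proof of Lemma \ref{lem:boundedBy1}, the plan is to exhibit, for each area, an explicit region whose relative perimeter is at most $1-a$.

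For small areas we use a quarter-circle around a convex corner instead of a full vertical segment. A quarter disk of radius $r$ centered at $(0,0)$ has area $\tfrac{\pi}{4}r^2$ and relative perimeter $\tfrac{\pi}{2}r$. It lies in $Q_a$ as long as $r\le \tfrac12(1-a)$, since then it stays inside the corner square $[0,\tfrac12(1-a)]^2$ and so away from the hole. At the largest such radius the perimeter is $\tfrac{\pi}{4}(1-a)<1-a$, and the perimeter is increasing in $r$. So every area $A\in[0,\tfrac{\pi}{16}(1-a)^2]$ is bounded with perimeter less than $1-a$.

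For larger areas we use horizontal strips. For $A\in[0,\tfrac12(1-a)]$, the strip $[0,1]\times[0,A]$ is bounded by one horizontal segment of length $1$, which is too long. Instead we take the two-segment region from the proof of Lemma \ref{lem:boundedBy1}: the set $\{y\le t\}$ for $t\in[\tfrac12(1-a),\tfrac12(1+a)]$. Its boundary is the two horizontal segments from the sides of the hole to the outer walls, of total length $1-a$. Its area runs from $\tfrac12(1-a)$ to $\tfrac12(1-a)+(1-a)\cdot\tfrac{a}{2}\cdot 2/2$. More precisely, each unit of height adds $1-a$ of area, so the area at $t=\tfrac12$ is $\tfrac12(1-a)+(1-a)\tfrac a2=\tfrac12(1-a^2)$. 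This covers $[\tfrac12(1-a),\tfrac12(1-a^2)]$ with perimeter exactly $1-a$.

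The remaining gap is $A\in(\tfrac{\pi}{16}(1-a)^2,\tfrac12(1-a))$. This is the main obstacle, and it is where $a\ge\tfrac13$ must enter. We fill it with a region bounded by a single horizontal segment of length $1$ strictly below the hole; its perimeter is $1$, which is not good enough. The better construction is a corner rectangle $[0,x]\times[0,y]$ with $x,y\le\tfrac12(1-a)$, or better, the corner square region $\{x\le s\}\cap\{y\le s\}$. The cleanest option is a quarter disk centered at a convex corner, allowed to grow with radius up to $1-a$ while using the hole's walls as free boundary.

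Concretely, take $S=\{(x,y)\in Q_a : x+y\ge\dots\}$; simpler still is the L-shaped corner region $[0,s]^2$ with $s\le\tfrac12(1+a)$. Its relative perimeter is $2s$ minus the part lying along the hole. For $s\in[\tfrac12(1-a),\tfrac12(1+a)]$, the region $[0,s]^2\cap Q_a$ has area $s^2-(s-\tfrac12(1-a))^2$, and its boundary within $\operatorname{int}Q_a$ consists of two segments of length $\tfrac12(1-a)$, totaling $1-a$. At $s=\tfrac12(1-a)$ the area is $\tfrac14(1-a)^2$, and it increases continuously to $\tfrac14(1-a)^2+\tfrac12(1-a)a=\tfrac14(1-a)(1+a)$. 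The rectangle corner region $[0,s]^2$ with $s\le\tfrac12(1-a)$ has perimeter $2s\le1-a$ and covers $[0,\tfrac14(1-a)^2]$.

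So the argument chains three families: corner squares, then corner L-regions up to area $\tfrac14(1-a^2)$, then half-strips from $\tfrac12(1-a)$ on. The remaining interval to close is $[\tfrac14(1-a^2),\tfrac12(1-a)]$. It is empty exactly when $\tfrac14(1-a^2)\ge\tfrac12(1-a)$, i.e. $1+a\ge2$, which fails for $a<1$. To close it we use unions: a half-strip type region $\{y\le t\}\cap\{x\le u\}$ bounded by one vertical segment from the bottom wall to the hole, of length $\tfrac12(1-a)$, plus one horizontal segment from the hole to the right wall. Sliding $u$ and $t$ connects the areas continuously with perimeter $1-a$. Verifying that this sweep covers the gap is precisely where $a\ge\tfrac13$ should be needed, and checking that bookkeeping is the step I expect to be delicate.
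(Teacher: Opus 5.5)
\paragraph{Where the argument breaks.} Your three families are the right ones and are essentially the paper's: small corner regions, the corner L-regions $[0,s]^2\cap Q_a$ (the paper's Adjacent Lines regions), and the half-strips $\{y\le t\}\cap Q_a$ (Opposite Lines regions). The gap is that you never prove the one inequality joining the second family to the third. The leftover interval you end up worrying about comes from an arithmetic slip. By your own formula the L-region has area $s^2-(s-\tfrac12(1-a))^2=(1-a)s-\tfrac14(1-a)^2$, so at $s=\tfrac12(1+a)$ it equals
\[
\tfrac14(1-a)(1+3a)=\tfrac14(1-a)^2+a(1-a),
\]
not $\tfrac14(1-a^2)$. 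As $s$ grows by $a$, each of the two arms of the L gains area $\tfrac12(1-a)\cdot a$, and you counted only one arm. So the L-regions realize every area in $[\tfrac14(1-a)^2,\tfrac14(1-a)(1+3a)]$ with perimeter $1-a$. This interval reaches $\tfrac12(1-a)$, where the half-strips take over, if and only if $1+3a\ge 2$, i.e.\ $a\ge\tfrac13$. That is exactly where the hypothesis is used, and it is the key line of the paper's proof. Your proposal replaces it with an unverified \emph{sweep}.

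\paragraph{Why the sweep adds nothing, and other cleanup.} The sweep could not have closed anything beyond this anyway. The region $\{x\le u,\ y\le t\}\cap Q_a$ with $u,t\in[\tfrac12(1-a),\tfrac12(1+a)]$, or its mirror in another corner, is again an Adjacent Lines region. Its area is $\tfrac12(1-a)(u+t)-\tfrac14(1-a)^2$, whose maximum is the same $\tfrac14(1-a)(1+3a)$. So delete the last paragraph and insert the inequality above. Also discard the quarter disk of radius up to $1-a$: once $r>(1-a)/\sqrt2$ it meets the hole, and its relative perimeter $2r\arcsin(\tfrac{1-a}{2r})$ is then strictly larger than $1-a$. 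Your quarter disks of radius at most $\tfrac12(1-a)$ are valid but redundant. The corner squares $[0,s]^2$ with $s\le\tfrac12(1-a)$ already cover $[0,\tfrac14(1-a)^2]$ with perimeter $2s\le 1-a$. The paper instead uses quarter circles up to area $\tfrac1\pi(1-a)^2$, which overlaps the Adjacent Lines range because $\tfrac1\pi>\tfrac14$.
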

\begin{proof}
    Note that the largest quarter circle that can fit in the corner of $Q_a$ has an area of $\frac{\pi}{8}(1-a)^2$. Now consider an area $A \in \left[0,\frac{1}{\pi}(1-a)^2\right]$. Then bounding this area with a quarter circle gives us a perimeter of $\sqrt{\pi A}$. In other words, the perimeter is at most $\sqrt{\pi \cdot \frac{1}{\pi}(1-a)^2} = 1-a$. 

    Consider regions whose boundary consists of two line segments which orthogonally connect adjacent sides of the central square to the edges of the outer square. These regions can have an area $\frac{1}{4}(1-a)^2 \leq A \leq \frac{1}{4}(1-a)^2 + a(1-a)$. Now consider an area $A \in \left(\frac{1}{\pi}(1-a)^2, \frac{1}{4}(1-a)^2 + a(1-a)\right]$. Since $\frac{1}{\pi}(1-a)^2 > \frac{1}{4}(1-a)^2$, we know that an area of $A$ can be realized by an adjacent line region, which has a perimeter of $1-a$.

    Consider regions whose boundary consists of two line segments which orthogonally connect opposite sides of the central square to the edges of the outer square. These regions can have an area $\frac{1}{2}(1-a) \leq A \leq \frac{1}{2}|Q_a|$.
    Let $A \in \left(\frac{1}{4}(1-a)^2 + a(1-a),\frac{1}{2}|Q_a|\right]$. Note that $\frac{1}{4}(1-a)^2 + a(1-a) \geq \frac{1}{2}(1-a)$ if and only if $a \geq \frac{1}{3}$. Thus, the area $A$ can be realized by an opposite lines region, which has a perimeter of $1-a$.
\end{proof}

\subsection{Regions with 1 Boundary Component}
We will now consider all possible regions which are bound by a single component. Regions whose boundary is a single line segment will not by eliminated, and thus will appear as a possible minimizer. Therefore, we will restrict our attention to regions whose boundary is a single arc of a circle. To help with this we will first establish the following bound on the relative perimeter.

\begin{lemma}\label{lem:quarterCircleBound}
    Let $S \subseteq Q_a$ be a region whose boundary is a single arc of a circle which follows the conditions of Theorem \ref{thm:backgroundProperties}. Then $P(S) \geq \sqrt{\pi|S|}$, with equality if and only if $S$ is a quarter-circle region.
\end{lemma}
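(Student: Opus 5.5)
The plan is a case analysis on where the two endpoints of the arc lie, using the constraints of Theorem \ref{thm:backgroundProperties}. In every case we reduce to an elementary inequality for a circular arc meeting straight lines orthogonally.

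Let the arc $\gamma = \partial S \cap \operatorname{int}(Q_a)$ have radius $r$ and subtend angle $\theta$. Its length is then $L = r\theta$. The endpoints of $\gamma$ lie on $\partial Q_a$. By Theorem \ref{thm:backgroundProperties}, at a non-corner point the arc meets the boundary orthogonally. So the tangent line to the boundary edge there passes through the center of the circle, and the edge is a radial line. The arc also cannot end at a convex corner (the four outer corners). It may end at a non-convex corner (a corner of the central square), but at most once.

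\textbf{Step 1: a closed arc.} If $\gamma$ has no endpoints, it is a full circle in the interior. If $S$ is the disk, $L = 2\pi r$ and $|S| = \pi r^2$, so $L = 2\sqrt{\pi|S|} > \sqrt{\pi|S|}$. If $S$ is the outside of a circle, we use the symmetry convention and argue with its complement, or note that the complement is a disk and the same bound holds.

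\textbf{Step 2: the endpoints lie on two edges.} Since both edge lines pass through the center, the two edges are either collinear through the center or meet at the center.
\begin{itemize}
\item If they are collinear, $\theta = \pi$ and the region is a half-disk. Then $L = \pi r$ and $|S| = \pi r^2/2$, so $L = \sqrt{2\pi|S|} > \sqrt{\pi|S|}$.
\item If the edge lines meet at the center, the center is a vertex where two edges of $Q_a$ meet at angle $\phi$. In $Q_a$ every such vertex is a convex corner with $\phi = \pi/2$, or a reflex corner of the central square with interior angle $3\pi/2$. The region $S$ is then a sector: $L = r\phi$ and $|S| = r^2\phi/2$. This gives $L = \sqrt{2\phi |S|}$, and $2\phi \ge \pi$ with equality iff $\phi = \pi/2$. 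That is exactly the quarter circle.
\item We must also allow $S$ to be the other side of the arc. This is a region containing almost all of $Q_a$. It either has area bounded below so that the inequality is trivially checked, or it is handled by noting that the sector picture is forced. I would handle it by computing the complement area and showing $L$ is still large. This is likely the delicate part, and Lemma \ref{lem:boundedBy1} may help to bound the areas involved.
\end{itemize}

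\textbf{Step 3: one endpoint at a non-convex corner.} Here orthogonality holds only at the other endpoint. The center lies on the line of the edge containing that endpoint, and the arc passes through the corner of the central square. I would parametrize by the center's position $c$ along that edge line and by the radius. Then I would show that $L^2/|S|$ is minimized at a configuration degenerating to the sector case, or directly compare with a quarter circle of the same area by a reflection argument. Reflecting $S$ across the edge containing the orthogonal endpoint produces a region bounded by a single arc, of area $2|S|$ and length $2L$, sitting against boundary lines. The classical isoperimetric inequality in a wedge or half-plane then gives $(2L)^2 \ge c\cdot 2|S|$ with the needed constant.

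I expect this non-convex-corner case to be the main obstacle, because the free boundary condition fails at the corner and $\theta$ is no longer determined by the geometry. What I would try first is the relative isoperimetric inequality for a region in a convex cone of angle at most $\pi/2$. That inequality says $P \ge \sqrt{2\alpha |S|}$ for a cone of angle $\alpha$. The key is to show that the region cut off lies in such a cone with apex at a relevant corner; otherwise its perimeter exceeds $\sqrt{\pi|S|}$ strictly. Equality is traced back to the sector case with $\phi = \pi/2$.
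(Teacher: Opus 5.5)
Your case split misreads Theorem~\ref{thm:backgroundProperties}. The theorem says $\partial S$ meets \emph{each} non-convex corner at most once; it does not say the arc may touch only one corner in total. An arc running between two different corners of the central square is therefore admissible. These are the paper's Cases VIII--X: arcs through opposite or adjacent corners, possibly wrapping around the whole square. They appear nowhere in your Steps 1--3, and since no orthogonality condition locates the center, your sector computation does not reach them.

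Step 3 also fails as proposed. When the arc wraps around a corner (the paper's Case VII), the center lies on the line of the side containing the orthogonal endpoint. The disk is then symmetric about that line, and $S$ (the disk minus the square) lies on both sides of it. So $S$ overlaps its reflection, and their union is essentially the whole disk. You obtain neither a region of area $2|S|$ bounded by length $2L$, nor a region lying in a half-plane. The cone bound $P\ge\sqrt{2\alpha|S|}$ is no substitute. It gives $\sqrt{\pi|S|}$ only for $\alpha\ge\pi/2$, and it requires $S$ to sit in a convex cone with the arc as its only free boundary, whereas here $S$ rests against the reflex corner of the square.

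The missing idea is the paper's elementary estimate for all corner cases. $S$ lies in the disk of the circle, so $|S|<\pi r^2$, while the arc omits an angle $\theta<\pi$ (indeed $\theta<\pi/2$ in Cases VII, VIII, X). Hence $P(S)=(2\pi-\theta)r>\pi r>\sqrt{\pi|S|}$. One configuration needs separate handling, and the paper's figure omits it too: a circular segment standing on a side of the central square with endpoints at its two corners. There, with arc angle $\varphi\in(0,\pi]$, one has $P^2/|S|=2\varphi^2/(\varphi-\sin\varphi)\ge 2\pi$.

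Your plan to treat the other side of the arc cannot be carried out, because the inequality is false there. This covers both the complement remark in Step 1 and the last bullet of Step 2. Take $S=Q_a$ minus a quarter disk of small radius $\rho$ at an outer corner. It satisfies every hypothesis, yet $P(S)=\pi\rho/2<\sqrt{\pi|S|}$. Arguing with the complement does not help, since the bound involves $|S|$ and not $|Q_a\setminus S|$. The lemma must be read, as the paper's figures tacitly do, with $S$ the region on the concave side of the arc.

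Within Steps 1--2 there are smaller omissions. The circle may enclose the central square (Cases V and VI), so $|S|=\pi r^2-a^2$ or $\frac{\pi}{4}r^2-a^2$ rather than the disk or sector area. The inequality then only becomes strict, but your claim that $\phi=\pi/2$ is ``exactly the quarter circle'' must exclude Case VI. Your assertion that the two edge lines meet at a vertex of $Q_a$ also needs a justification you did not give. An arc joining the outer boundary to the inner boundary does not separate the annulus $Q_a$, so both endpoints lie on the same square.
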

\begin{proof}
    Since $S \subseteq Q_a$ follows the conditions set in Theorem \ref{thm:backgroundProperties}, the proof reduces to checking the following cases.
    \begin{center}
        \begin{tikzpicture}[scale=2]
            \begin{scope}[shift={(0,0)}]
                \draw[ultra thick] (0,0) rectangle (1,1);
                \filldraw[ultra thick,fill=gray] (0.4,0.4) rectangle (0.6,0.6);
                \draw[thick] (0.7,0) arc (180:90:0.3);
                \draw (0.5, 1) node[anchor=south] {\textbf{Case I}};
            \end{scope}
            \begin{scope}[shift={(1.2,0)}]
                \draw[ultra thick] (0,0) rectangle (1,1);
                \filldraw[ultra thick,fill=gray] (0.4,0.4) rectangle (0.6,0.6);
                \draw[thick] (0.8,0) arc (0:180:.3);
                \draw (0.5, 1) node[anchor=south] {\textbf{Case II}};
            \end{scope}
            \begin{scope}[shift={(2.4,0)}]
                \draw[ultra thick] (0,0) rectangle (1,1);
                \filldraw[ultra thick,fill=gray] (0.4,0.4) rectangle (0.6,0.6);
                \draw[thick] (.6,.45) arc (-90:180:.15);
                \draw (0.5, 1) node[anchor=south] {\textbf{Case III}};
            \end{scope}
            \begin{scope}[shift={(3.6,0)}]
                \draw[ultra thick] (0,0) rectangle (1,1);
                \filldraw[ultra thick,fill=gray] (0.4,0.4) rectangle (0.6,0.6);
                \draw[thick] (0.75,0.25) circle (.125);
                \draw (0.5, 1) node[anchor=south] {\textbf{Case IV}};
            \end{scope}
            \begin{scope}[shift={(4.8,0)}]
                \draw[ultra thick] (0,0) rectangle (1,1);
                \filldraw[ultra thick,fill=gray] (0.4,0.4) rectangle (0.6,0.6);
                \draw[thick] (0.5,0.5) circle (0.3);
                \draw (0.5, 1) node[anchor=south] {\textbf{Case V}};
            \end{scope}
            \begin{scope}[shift={(0,-1.4)}]
                \draw[ultra thick] (0,0) rectangle (1,1);
                \filldraw[ultra thick,fill=gray] (0.4,0.4) rectangle (0.6,0.6);
                \draw[thick] (0.1,0) arc (180:90:0.9);
                \draw (0.5, 1) node[anchor=south] {\textbf{Case VI}};
            \end{scope}
            \begin{scope}[shift={(1.2,-1.4)}]
                \draw[ultra thick] (0,0) rectangle (1,1);
                \filldraw[ultra thick,fill=gray] (0.4,0.4) rectangle (0.6,0.6);
                \draw[thick] (.6,.4) arc (-120:180:.22);
                \draw (0.5, 1) node[anchor=south] {\textbf{Case VII}};
            \end{scope}
            \begin{scope}[shift={(2.4,-1.4)}]
                \draw[ultra thick] (0,0) rectangle (1,1);
                \draw[thick] (0.65,0.65) circle (0.26);
                \filldraw[ultra thick,fill=gray] (0.4,0.4) rectangle (0.6,0.6);
                \draw (0.5, 1) node[anchor=south] {\textbf{Case VIII}};
            \end{scope}
            \begin{scope}[shift={(3.6,-1.4)}]
                \draw[ultra thick] (0,0) rectangle (1,1);
                \draw[thick] (0.275,0.5) circle (0.16);
                \filldraw[ultra thick,fill=gray] (0.4,0.4) rectangle (0.6,0.6);
                \draw (0.5, 1) node[anchor=south] {\textbf{Case IX}};
            \end{scope}
            \begin{scope}[shift={(4.8,-1.4)}]
                \draw[ultra thick] (0,0) rectangle (1,1);
                \draw[thick] (0.608,0.61) arc (22:338:0.295);
                \filldraw[ultra thick,fill=gray] (0.4,0.4) rectangle (0.6,0.6);
                \draw (0.5, 1) node[anchor=south] {\textbf{Case X}};
            \end{scope}
        \end{tikzpicture}
    \end{center}
    For cases I-VI, a simple computation gives us the desired result.
    \begin{itemize}
        \item \textbf{Case I:}
        \[
            P(S) = \sqrt{\pi |S|}.
        \]
        \item \textbf{Case II:} 
        \[
            P(S) = \sqrt{2\pi |S|} > \sqrt{\pi|S|}.
        \]
        \item \textbf{Case III:}
        \[
            P(S) = \sqrt{3\pi |S|} > \sqrt{\pi |S|}.
        \]
        \item \textbf{Case IV:}
        \[
            P(S) = 2\sqrt{\pi |S|} > \sqrt{\pi |S|}.
        \]
        \item \textbf{Case V:}
        \[
            P(S) = 2\sqrt{\pi \left(|S| + a^2\right)} > \sqrt{\pi |S|}.
        \]
        \item \textbf{Case VI:}
        \[
            P(S) = \sqrt{\pi \left(|S|+a^2\right)} > \sqrt{\pi |S|}.
        \]
    \end{itemize}
    We can describe cases VII, VIII, and IX with the following figure.
    \begin{center}
        \begin{tikzpicture}[scale=5]
            \filldraw[ultra thick,fill=gray] (0.45,0.45) rectangle (0.55,0.55);
            \draw[thick] (0.55,0.45) arc (-135+15:225-15:0.27);
            \draw[thick,dashed] (0.45,0.55) -- (0.7,0.7) -- (0.55,0.45);
            \draw (0.55,0.6) node[anchor=south east] {$r$};
            \draw (0.63,0.63) node[anchor=north east] {$\theta$};
        \end{tikzpicture}
    \end{center}
    \begin{itemize}
        \item \textbf{Cases VII/VIII:} Note that $|S| < \pi r^2$, which implies $r > \sqrt{\frac{1}{\pi}|S|}$. Also, in both cases $\theta < \frac{\pi}{2}$. Hence, $2\pi -\theta > \frac{3\pi}{2}$. Putting this together, we have
        \[
            P(S) = (2\pi - \theta)r > \frac{3\pi}{2}\sqrt{\frac{1}{\pi}|S|} > \sqrt{\pi |S|}.
        \]
        \item \textbf{Case IX:} We still have that $|S| < \pi r^2$, which implies $r > \sqrt{\frac{1}{\pi}|S|}$. However, in this case $\theta < \pi$. Hence, $2\pi - \theta > \pi$. Thus,
        \[
            P(S) = (2\pi - \theta)r > \pi\sqrt{\frac{1}{\pi}|S|} = \sqrt{\pi|S|}.
        \]
    \end{itemize}
    We can describe case X with a similar figure, below.
    \begin{center}
        \begin{tikzpicture}[scale=5]
        \draw[thick] (0.6035,0.6035) arc (22:338:0.275);
            \filldraw[ultra thick,fill=gray] (0.4,0.4) rectangle (0.6,0.6);
            \draw[thick,dashed] (0.6,0.6) -- (0.35,0.5) -- (0.6,0.4);
            \draw (0.55,0.6) node[anchor=south east] {$r$};
            \draw (0.5,0.5) node[anchor=west] {$\theta$};
        \end{tikzpicture}
    \end{center}
    \begin{itemize}
        \item \textbf{Case X:} In this case we still have that $r > \sqrt{\frac{1}{\pi}|S|}$. Also, the max value of $\theta$ occurs when the center of the circle aligns with the center of the square. Hence, $\theta < \frac{\pi}{2}$. Thus, we have
        \[
            P(S) = (2\pi -\theta)r > \frac{3\pi}{2}\sqrt{\frac{1}{\pi}|S|} > \sqrt{\pi |S|}.
        \]
    \end{itemize}
\end{proof}

With this bound in place, we are now ready to say that minimizing regions bound by a single curve have a boundary which is either a line segment or a quarter of a circle. Since we have not eliminated the possibility that a line segment bounded region is possible, it suffices to show the following lemma.

\begin{lemma}\label{lem:QCirclesMinimizers}
    Let $S \subseteq Q_a$ be a minimizing region whose boundary is a single arc of a circle. Then $S$ must be a quarter-circle region.
\end{lemma}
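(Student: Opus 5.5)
We argue by contradiction. Suppose $S \subseteq Q_a$ is a minimizer whose boundary is a single circular arc, but $S$ is not a quarter-circle region. By Lemma \ref{lem:quarterCircleBound}, $P(S) > \sqrt{\pi |S|}$. We will exhibit a competitor of the same area with relative perimeter at most $\sqrt{\pi|S|}$, which contradicts minimality. Complementation preserves relative perimeter and $f_a$ is symmetric, so we may assume $|S| \le \frac{1}{2}|Q_a|$.

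\textbf{Small areas.} If $|S| \le \frac{\pi}{8}(1-a)^2$, a quarter circle of area $|S|$ fits in a corner of $Q_a$. Its relative perimeter is exactly $\sqrt{\pi|S|}$, which is strictly less than $P(S)$, so we are done in this range.

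\textbf{Larger areas.} Suppose $|S| > \frac{\pi}{8}(1-a)^2$. Then $\sqrt{\pi|S|} > \frac{\pi}{2\sqrt 2}(1-a) > 1-a$. We want a competitor of perimeter at most $1-a$.
\begin{itemize}
\item For $a \ge \frac13$, Lemma \ref{lem:boundedBy1MinusA} gives $f_a(|S|) \le 1-a < \sqrt{\pi|S|} < P(S)$.
\item For $a < \frac13$, use the competitors from the proof of Lemma \ref{lem:boundedBy1MinusA} directly.
  \begin{itemize}
  \item The adjacent-lines regions realize every area in $\left[\frac14(1-a)^2,\ \frac14(1-a)^2 + a(1-a)\right]$ with perimeter $1-a$.
  \item The opposite-lines regions realize every area in $\left[\frac12(1-a),\ \frac12|Q_a|\right]$ with perimeter $1-a$.
  \item The remaining gap $\left(\frac14(1-a)^2 + a(1-a),\ \frac12(1-a)\right)$ is covered by a single vertical segment of length $1$, using Lemma \ref{lem:boundedBy1}.
  \end{itemize}
\end{itemize}

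\textbf{Main obstacle: the gap range for $a<\frac13$.} There the competitor has perimeter $1$, so we need $\sqrt{\pi|S|} \ge 1$, i.e. $|S| \ge \frac1\pi$. This fails when $|S| < \frac1\pi$, so in that case we must instead compare $1$ with $P(S)$ directly.

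My first attempt would be to return to the case analysis of Lemma \ref{lem:quarterCircleBound}. In Cases II--X the proof actually gives the stronger bounds
\[
P(S) \ge \sqrt{2\pi|S|} \quad\text{or}\quad P(S) \ge \tfrac{3\pi}{2}\sqrt{|S|/\pi} = \tfrac32\sqrt{\pi|S|}.
\]
These exceed $1$ once $|S| > \frac{1}{2\pi}$. Since every area in the gap is at least $\frac14(1-a)^2 + a(1-a) \ge \frac14$, and $\frac14 > \frac{1}{2\pi}$, the stronger bound suffices there.

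Case V (a circle around the hole) and Case VI give bounds involving $|S|+a^2$, and these also dominate $\sqrt{\pi|S|}$. The one remaining possibility is a non-quarter arc whose perimeter is close to $\sqrt{\pi|S|}$, namely Case VI. There $P(S) = \sqrt{\pi(|S|+a^2)}$. For $|S|\ge \frac14$ we also have $\frac{\pi}{8}(1-a)^2 < |S|$, and combining these shows the line competitors still win.

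The delicate part is making these numerical inequalities hold uniformly in $a$. I expect this to require splitting at $a=\frac13$ and checking the gap interval endpoints carefully.
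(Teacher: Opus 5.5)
Your outline matches the paper's up to the last step: a quarter-circle competitor when $|S|\le\frac{\pi}{8}(1-a)^2$, Lemma~\ref{lem:boundedBy1MinusA} when $a\ge\frac13$, and for $a<\frac13$ an argument that $P(S)>1\ge f_a(|S|)$. That last step is never completed, since you end with ``I expect this to require\dots''. It also leans on two claims that fail as stated.

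\textbf{Case IX.} The proof of Lemma~\ref{lem:quarterCircleBound} uses only $\theta<\pi$, i.e.\ $2\pi-\theta>\pi$. So there it gives only $P(S)>\sqrt{\pi|S|}$, not $\sqrt{2\pi|S|}$ or $\frac32\sqrt{\pi|S|}$, and your assertion that ``Cases II--X'' supply the stronger bound is false for IX. Write $K=\frac14(1-a)^2+a(1-a)$. For areas in $\bigl(\max\{K,\frac{\pi}{8}(1-a)^2\},\frac1\pi\bigr)$ (nonempty, e.g.\ at $a=0.15$), your argument then gives nothing forcing $P(S)>1$ in this case.

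You need an extra computation, which is exactly what the paper does in this lemma. With $|S|=\frac{r^2}{2}(2\pi-\theta+\sin\theta)$, the inequality $P(S)^2=(2\pi-\theta)^2r^2>2\pi|S|$ is equivalent to $(2\pi-\theta)^2>\pi(2\pi-\theta+\sin\theta)$. Substituting $x=\pi-\theta\in(0,\pi)$, this reads $x(\pi+x)-\pi\sin x>x^2>0$. Alternatively, a Case IX disk must fit inside the strip of width $\frac{1-a}{2}$. That forces $r\le\frac{(1-a)^2+a^2}{4(1-a)}<\frac{1-a}{2\sqrt2}$, hence $|S|<\frac{\pi}{8}(1-a)^2$ when $a<\frac13$, but this too must be argued.

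\textbf{Case VI.} Your sketch feeds $|S|\ge\frac14$ and $|S|>\frac{\pi}{8}(1-a)^2$ into $P(S)=\sqrt{\pi(|S|+a^2)}$, and this does not close. The best area information in the gap is $|S|>\max\{K,\frac{\pi}{8}(1-a)^2\}$. Since $K+a^2=\frac14(1+a)^2$, at $a=\frac18$ this yields only $\pi(|S|+a^2)>\frac{81\pi}{256}$, which is less than $1$. Checking gap endpoints or splitting at $a=\frac13$ cannot repair this, because the missing input is geometric. A quarter arc centered at an outer corner that encloses the hole must pass beyond the hole's far corner. Hence $r>\frac{1+a}{\sqrt2}$ and $P(S)=\frac{\pi}{2}r>\frac{\pi}{2\sqrt2}>1$ directly, as in the paper.

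\textbf{A simplification.} Once IX and VI are handled this way, your detour through adjacent-line and opposite-line competitors is unnecessary. For $a<\frac13$ and $|S|>\frac{\pi}{8}(1-a)^2$ one already has $|S|>\frac{\pi}{18}>\frac{1}{2\pi}$. So every remaining case bound exceeds $1\ge f_a(|S|)$ by Lemma~\ref{lem:boundedBy1}, which is the paper's uniform argument.
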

\begin{proof}
    Let us assume that $S$ is not a quarter-circle region. Since $S$ is a minimizer, we know that it satisfies Theorem \ref{thm:backgroundProperties}. Thus,
    from Lemma \ref{lem:quarterCircleBound} we have that
    \[
        P(S) > \sqrt{\pi |S|}.
    \]
    
    If $|S| \leq \frac{\pi}{8}(1-a)^2$, then we can fit a quarter-circle region $S' \subseteq Q_a$ such that $|S'| = |S|$ and $P(S') = \sqrt{\pi|S|}$. However, this contradicts $S$ being an isoperimetric minimizer. Thus, for the remainder of the proof we will assume that $|S| > \frac{\pi}{8}(1-a)^2$. From this lower bound on area we have
    \[
        P(S) > \sqrt{\pi |S|} > \frac{\pi}{2\sqrt{2}}(1-a) > 1-a.
    \]
    If $a \geq \frac{1}{3}$, then by Lemma $\ref{lem:boundedBy1MinusA}$ $P(S) \leq 1-a$, which is a contradiction.

    We will now split into the same cases as in Lemma \ref{lem:quarterCircleBound}. Note that by Lemma \ref{lem:boundedBy1} we have $P(S) \leq 1$. Thus, if we can show that $P(S) > 1$, then we are done. By our previous arguments, it suffices to show this result under the assumption that $|S| > \frac{\pi}{8}(1-a)^2$ and $a < \frac{1}{3}$.
    \begin{itemize}
        \item \textbf{Case I:} By assumption, $S$ is not a quarter-circle region.
        \item \textbf{Case II:}
        \[
            P(S) = \sqrt{2\pi|S|} > \frac{\pi}{2}(1-a) > \frac{\pi}{3} > 1.
        \]
        \item \textbf{Case III:}
        \[
            P(S) = \sqrt{3\pi|S|} > \frac{\pi\sqrt{3}}{2\sqrt{2}}(1-a) > \frac{\pi}{\sqrt{6}} > 1.
        \]
        \item \textbf{Case IV:}
        \[
            P(S) = 2\sqrt{\pi|S|} > \frac{\pi}{\sqrt{2}}(1-a) > \frac{2\pi}{3\sqrt{2}} > 1.
        \]
        \item \textbf{Case V:}
        \[
            P(S) = 2\sqrt{\pi\left(|S|+a^2\right)} > 2\sqrt{\pi|S|}> \frac{\pi}{\sqrt{2}}(1-a) > \frac{2\pi}{3\sqrt{2}} > 1.
        \]
    \end{itemize}
    For the remainder of the cases, we must consider the radius of the arc.
    \begin{itemize}
        \item \textbf{Case VI:} From the geometry, in this case we have that $r > \frac{1+a}{\sqrt{2}}$. Thus, we have
        \[
            P(S) = \frac{1}{2}\pi r > \frac{\pi(1+a)}{2\sqrt{2}} > \frac{\pi}{2\sqrt{2}} > 1.
        \]
        \item \textbf{Cases VII/VIII:} As in Lemma \ref{lem:quarterCircleBound}, we have $r > \sqrt{\frac{1}{\pi}|S|}$ and $2\pi - \theta > \frac{3\pi}{2}$. Hence,
        \[
            P(S) = (2\pi -\theta)r > \frac{3\pi}{2}\sqrt{\frac{1}{\pi}|S|} > \frac{3\pi}{4\sqrt{2}}(1-a) > \frac{\pi}{2\sqrt{2}} > 1.
        \]
        \item \textbf{Case IX:} Note that $|S| = \pi r^2 - \frac{1}{2}r^2(\theta-\sin\theta)$. Now solving for the radius and using our bounds on $|S|$ and $a$, we have
        \[
            r = \sqrt{\frac{2|S|}{2\pi - \theta + \sin\theta}} > \frac{1-a}{2}\sqrt{\frac{\pi}{2\pi - \theta + \sin\theta}} > \frac{1}{3}\sqrt{\frac{\pi}{2\pi - \theta + \sin\theta}}.
        \]
        Thus,
        \[
            P(S) = (2\pi - \theta)r > \frac{1}{3}\sqrt{\frac{\pi(2\pi -\theta)^2}{2\pi - \theta + \sin\theta}}.
        \]
        Hence, this case is finished if
        \[
            \sqrt{\frac{\pi(2\pi -\theta)^2}{2\pi - \theta + \sin\theta}} > 3.
        \]
        Note that in this case $0 < \theta < \pi$. Thus, $2\pi -\theta +\sin\theta > 0$. Therefore, we have
        \[
            \sqrt{\frac{\pi(2\pi -\theta)^2}{2\pi - \theta + \sin\theta}} > \pi \hspace{1em} \iff \hspace{1em} (\pi - \theta)(2\pi - \theta) - \pi \sin\theta > 0.
        \]
        To see that the right inequality is true, consider the change of variables to $x = \pi - \theta$ with $0 < x < \pi$, and note that $x > \sin x$ in this interval. Putting these together, we get
        \[
            (\pi - \theta)(2\pi - \theta) - \pi \sin\theta = x(\pi + x) - \pi\sin x > x(\pi + x) - \pi x = x^2 > 0.
        \]
        \item \textbf{Case X:} In this case we have $r > \sqrt{\frac{1}{\pi}|S|}$ and $2\pi -\theta > \frac{3\pi}{2}$. Hence,
        \[
            P(S) = (2\pi -\theta)r > \frac{3\pi}{2}\sqrt{\frac{1}{\pi}|S|} > \frac{3\pi}{4\sqrt{2}}(1-a) > \frac{\pi}{2\sqrt{2}} > 1.
        \]
    \end{itemize}
\end{proof}

\subsection{Regions with 2 Boundary Components}
We now consider all possible minimizing regions which are bound by two components. We begin by considering regions bound by line segments. Note that by Lemma \ref{lem:boundedBy1}, the boundary of a minimizing region cannot contain two lines of length $1$. Hence, any possible two-line region will consist of segments of length $\frac{1}{2}(1-a)$. The following lemma characterizes these minimizers.

\begin{lemma}\label{lemma:SPE}
    Let $S \subseteq Q_a$ be a minimizer with $\partial S$ consisting of two line segments. Then $S$ can only be an Adjacent Lines region or an Opposite Lines region.
\end{lemma}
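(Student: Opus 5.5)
Since $\partial S$ consists of two line segments, each segment lies in $\operatorname{int}(Q_a)$ with endpoints on $\partial Q_a$, and by Theorem \ref{thm:backgroundProperties} it meets non-corner boundary points orthogonally and avoids convex corners. A straight segment meeting two boundary edges orthogonally at both ends must connect two parallel edges. Hence each segment is either a full-length segment of length $1$ running from one side of the outer square to the opposite side (avoiding the hole), or a segment of length $\frac{1}{2}(1-a)$ joining a side of the central square to the parallel side of the outer square. Segments joining opposite sides of the central square do not occur, since the hole blocks them.

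The first step is to rule out full-length segments. As the remark before the lemma notes, Lemma \ref{lem:boundedBy1} excludes two segments of length $1$, since then $P(S)=2>1$. It also excludes one segment of length $1$ together with one of length $\frac{1}{2}(1-a)$: then $P(S)=1+\frac{1}{2}(1-a)>1$, contradicting $P(S)\le f_a(|S|)\le 1$. So both segments are short and run from the central square to the outer boundary. Note also that by Theorem \ref{thm:backgroundProperties} a segment cannot end at a non-convex corner of the hole orthogonally, since at such a point the boundary has no well-defined normal. I would take segment endpoints on the hole to be interior points of its sides.

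Next I classify configurations of two short segments by which sides of the central square they touch. If they leave from adjacent sides, the region is (up to complementation) an Adjacent Lines region. If they leave from opposite sides, it is an Opposite Lines region. The remaining case is that both segments leave from the same side of the central square. They then cut off a rectangular strip of width $w$ and height $\frac{1}{2}(1-a)$ between them, or its complement. This is the case I expect to be the main obstacle, because it must be eliminated by comparison rather than by the regularity conditions.

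For the strip I would compare with a competitor. Its perimeter is $1-a$ and its area is at most $\frac{a}{2}(1-a)$. The region with the same area bounded by a single vertical full line has perimeter $1$, which is not good enough when $a$ is small. A better choice is a rectangle of the same area cut off by one short segment, adjacent to the outer wall: a single segment from a side of the hole to the outer boundary is not possible, but a single segment of length $\frac{1}{2}(1-a)$ placed at the outer square side near a corner does not enclose area either. So I would instead use a quarter circle: the area is at most $\frac{a(1-a)}{2}$, giving perimeter $\sqrt{\pi |S|}$. Failing that, I would slide the strip to the corner of the hole so that one segment becomes part of the hole's extended edge. That gives an Adjacent-type or one-segment-plus-corner region with the same area and perimeter, contradicting analyticity and the equal-curvature structure.

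I also need to handle the complement. In the complement case the same reduction applies, since $S$ and its complement share a boundary.
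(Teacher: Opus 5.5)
Your reduction to three configurations is correct: both short segments leave the same, adjacent, or opposite sides of the hole. Your exclusion of length-$1$ segments via Lemma \ref{lem:boundedBy1} also works. Comparing the same-side strip with a quarter circle is exactly the paper's first step. But you never determine when that comparison actually wins. The strip has $P(S)=1-a$ and $|S|\le\frac{a}{2}(1-a)$. The inequality $\sqrt{\pi|S|}<1-a$ holds for every such strip only if $\frac{a}{2}(1-a)<\frac{(1-a)^2}{\pi}$, i.e.\ $a<\frac{2}{2+\pi}$. In that range the quarter circle also fits, since $\frac{1}{\pi}<\frac{\pi}{8}$. The paper's proof splits on precisely this threshold.

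For $a\ge\frac{2}{2+\pi}$ your fallback fails. Sliding the strip until one segment starts at a corner of the hole just produces another same-side strip with the same area and perimeter. It meets one non-convex corner once, which the fifth bullet of Theorem \ref{thm:backgroundProperties} explicitly allows. Your claim that a segment cannot end at a corner of the hole is a misreading, and the competitors in Lemma \ref{lemma:DARE} use exactly such segments. Both components are straight, so nothing contradicts analyticity or constant curvature.

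In fact no contradiction is available. For $a\ge\frac{2}{2+\pi}$ and $|S|\in\left[\frac{(1-a)^2}{\pi},\frac{a}{2}(1-a)\right]$, the strip has perimeter $1-a=f_a(|S|)$ by the profile in Theorem \ref{theorem:Region_1}. So it is a tied minimizer, and the statement cannot hold literally there.

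The idea you are missing is the paper's second case, for $a>\frac{1}{3}$. This overlaps the first case because $\frac{1}{3}<\frac{2}{2+\pi}$. Lemma \ref{lem:boundedBy1MinusA} shows that every area is enclosed with perimeter at most $1-a$ by a Quarter Circle, Adjacent Lines, or Opposite Lines region. Hence same-side strips are never \emph{needed} to compute $f_a$. For large $a$ you can only prove this weaker ``may be discarded'' version, and your write-up should say so explicitly.
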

\begin{proof}
    Since $S$ is a minimizer, its boundary must intersect $Q_a$ orthogonally. This leaves three possibilities for $\partial S$: either the two line segments are on the same side, on adjacent sides, or on opposite sides. Assume $\partial S$ consists of two line segments on the same side, see Figure~\ref{sameSidePair}.
    \begin{figure}[ht]
        \centering
        \begin{tikzpicture}[scale=2]
            \draw[ultra thick] (0,0) rectangle (1,1);
            \filldraw[ultra thick,fill=gray] (0.4,0.4) rectangle (0.6,0.6);
            \draw[thick] (0.45,0.6) -- (0.45,1);
            \draw[thick] (0.55,0.6) -- (0.55,1);
        \end{tikzpicture}
        \caption{Same Side Lines}
        \label{sameSidePair}
    \end{figure}
    
    Then $0 \leq |S| \leq \frac{1}{2}a(1-a)$. From here we will split into cases. First, assume that $a < \frac{2}{2+\pi}$. Then $\frac{1}{2}a(1-a) < \frac{1}{\pi}(1-a)^2$. Now consider a quarter-circle region $S'$ with $|S| = |S'|$. Then
    \[
        P(S') = \sqrt{\pi |S'|} = \sqrt{\pi |S|} < 1-a = P(S).
    \]
    This contradicts $S$ being a minimizer.

    Now assume that $a > \frac{1}{3}$. Within the proof of Lemma \ref{lem:boundedBy1MinusA} we achieve a relative perimeter of $1-a$ using only adjacent line regions and opposite line regions. Hence, we never have to consider regions like Figure~\ref{sameSidePair}.
\end{proof} 

Next we will consider all regions which are bound by two arcs of circles. Figure~\ref{fig:typesofarcs} displays these regions. The following lemmas will characterize minimizing regions of this form.
\begin{figure}[ht]
    \centering
    \begin{tikzpicture}[scale=2]
        \draw[ultra thick] (0,0) rectangle (1,1);
        \filldraw[ultra thick,fill=gray] (0.4,0.4) rectangle (0.6,0.6);
        \draw[thick] (0.2,1) arc (180:235:0.5);
        \draw[thick] (0.8,1) arc (0:-55:0.5);
        \draw (0.5,-0.05) node[anchor=north] {Type 1};
    \end{tikzpicture}
    \hfill
    \begin{tikzpicture}[scale=2]
        \draw[ultra thick] (0,0) rectangle (1,1);
        \filldraw[ultra thick,fill=gray] (0.4,0.4) rectangle (0.6,0.6);
        \draw[thick] (0,0.8) arc (90:35:0.5);
        \draw[thick] (0.8,1) arc (0:-55:0.5);
        \draw (0.5,-0.05) node[anchor=north] {Type 2};
    \end{tikzpicture}
    \hfill
    \begin{tikzpicture}[scale=2]
        \draw[ultra thick] (0,0) rectangle (1,1);
        \filldraw[ultra thick,fill=gray] (0.4,0.4) rectangle (0.6,0.6);
        \draw[thick] (0.2,1) arc (180:235:0.5);
        \draw[thick] (0.2,0) arc (180:125:0.5);
        \draw (0.5,-0.05) node[anchor=north] {Type 3};
    \end{tikzpicture}
    \hfill
    \begin{tikzpicture}[scale=2]
        \draw[ultra thick] (0,0) rectangle (1,1);
        \filldraw[ultra thick,fill=gray] (0.4,0.4) rectangle (0.6,0.6);
        \draw[thick] (0,0.8) arc (90:35:0.5);
        \draw[thick] (0.8,0) arc (0:55:0.5);
        \draw (0.5,-0.05) node[anchor=north] {Type 4};
    \end{tikzpicture}
    \hfill
    \begin{tikzpicture}[scale=2]
        \draw[ultra thick] (0,0) rectangle (1,1);
        \filldraw[ultra thick,fill=gray] (0.4,0.4) rectangle (0.6,0.6);
        \draw[thick] (0.2,1) arc (180:235:0.5);
        \draw[thick] (0.8,0) arc (0:55:0.5);
        \draw (0.5,-0.05) node[anchor=north] {Type 5};
    \end{tikzpicture}
    \caption{Types of regions bounded by two circular arcs}
    \label{fig:typesofarcs}
\end{figure}

\begin{lemma}\label{lemma:DARE}
    Let $S\subseteq Q_a$ be a Type 2 Region or a Type 5 Region.
    Then $S$ is not a minimizer.
\end{lemma}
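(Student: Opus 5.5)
The plan is to use the second bullet of Theorem~\ref{thm:backgroundProperties}: all components of $\partial S\cap\operatorname{int}(Q_a)$ have the same constant curvature. This must hold \emph{with sign}, measured with respect to a fixed normal, say the outward normal of $S$. I will show that in a Type 2 or Type 5 configuration the two arcs are forced to bend in opposite directions relative to $S$. Then either the common curvature is $0$, so both components are line segments and we are in the setting of Lemma~\ref{lemma:SPE}, not two arcs; or the curvatures have opposite signs, which is a contradiction.

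\emph{Locating the centers.} Each arc meets two sides of $Q_a$ orthogonally: one side of the outer square and one side of the central square. A circle meeting a line orthogonally has its center on that line. So each arc's center is the intersection of the two lines containing the sides it meets.
\begin{itemize}
\item If the two sides are perpendicular, the center is a single point. For the Type 2 arc from the left outer side to the top of the central square, the center is $(0,\tfrac{1}{2}(1+a))$. The symmetric arc from the top outer side has center $(\tfrac{1}{2}(1-a),1)$.
\item If the two sides are parallel (an arc from the top outer side down to the top of the central square), no circle is orthogonal to both lines unless it degenerates. I would rule this case out separately, or note that it forces a segment.
\end{itemize}
For Type 5 the same computation applies to the two arcs. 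Their centers lie on $\partial[0,1]^2$ at points exchanged by the rotation by $\pi$ about $(\tfrac12,\tfrac12)$, or by the appropriate reflection.

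\emph{Comparing the signs.} With the centers known, I read off on which side of each arc its center lies, and on which side $S$ lies. In Type 2, $S$ is the region near the top-left corner region of $Q_a$ between the two arcs, or its complement. One center lies on the left outer side and the other on the top outer side. I expect the top-left outer corner to be on the concave side of one arc and on the convex side of the other, so the signed curvatures with respect to $S$ are $+1/r_1$ and $-1/r_2$. The Type 5 configuration should be handled the same way, using the point symmetry between the two arcs to transport the orientation from one arc to the other.

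The main obstacle is making this sign bookkeeping rigorous for every sub-configuration hidden inside ``Type 2'' and ``Type 5''. For instance, the arcs may land on different sides of the inner square, or $S$ may be the component containing the inner square's neighborhood rather than the corner. If the direct sign argument becomes messy, my fallback is a cut-and-reflect argument. I would reflect the piece of $S$ (or of its complement) cut off by one arc across a symmetry axis of $Q_a$. This preserves both area and relative perimeter, so it produces another minimizer. I would then check that the new minimizer violates Theorem~\ref{thm:backgroundProperties}: either its curvatures now visibly disagree, or its boundary has a non-smooth point or meets a non-convex corner twice.
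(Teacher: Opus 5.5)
Your strategy is sound: show the two arcs bend in opposite directions relative to $S$, contradicting constant signed curvature. But the step meant to establish it rests on the wrong geometry, and as written it fails.

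In Type 2 and Type 5 regions (Figure~\ref{fig:typesofarcs}), each arc runs from a side of the outer square to a \emph{corner} of the central square. That corner is a non-convex corner of $Q_a$, where Theorem~\ref{thm:backgroundProperties} imposes no angle condition. So each arc is orthogonal only to the outer side. Its center lies on that side's line, at distance $r$ from the inner corner; it is not the intersection of two side lines. Indeed, an arc orthogonal to both the left side and the top edge of the central square would be centered at $(0,\frac12(1+a))$. It would need $r=\frac12(1-a)$ and would end at the convex corner $(0,1)$, which is forbidden.

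Worse, the Type 2 you actually describe has two arcs: one from the left side centered at $(0,\frac12(1+a))$, and its mirror from the top side centered at $(\frac12(1-a),1)$. This pair is symmetric under reflection in the line $x+y=1$, and that reflection maps the top-left region to itself. So in your model the two arcs have the \emph{same} signed curvature with respect to $S$, and the mismatch you ``expect'' does not occur. Likewise, the Type 5 arcs run from the top (resp.\ bottom) outer side to a corner on the parallel top (resp.\ bottom) edge of the central square. That is exactly the ``parallel'' case you propose to discard as degenerate. The cut-and-reflect fallback is too loosely specified to fill this hole.

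With the correct picture, your idea works and gives a genuinely different proof from the paper's. In Type 2 as drawn, the left arc meets $x=0$ above the level $y=\frac12(1+a)$, and its center $(0,c)$ lies below that endpoint. So $S$, the piece containing $(0,1)$, lies locally outside that disk. The top arc meets $y=1$ to the right of $x=\frac12(1+a)$, with center $(c',1)$ to its left, so $S$ lies locally inside that disk. In Type 5, the half-turn about $(\frac12,\frac12)$ swaps the two arcs and also swaps $S$ with its complement, so it flips the sign. An area-preserving variation pushing one arc in and the other out then lowers perimeter to first order.

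The paper uses only equal radii. Equal radii make the lens removed by one arc congruent to the lens added by the other. This forces $|S|=\frac14(1-a)(1+a)$ in Type 2, or $|S|=\frac12|Q_a|$ in Type 5. Two segments of total length $1-a<(1-a)\theta\csc\theta=P(S)$ enclose that same area.

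Your route needs no comparison region and no area computation. However, it relies on the \emph{signed} form of the constant-curvature condition. That form is standard from the first variation, but Theorem~\ref{thm:backgroundProperties} does not spell it out. The paper's route needs only $|\kappa_1|=|\kappa_2|$, at the cost of the explicit comparison.
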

\begin{proof}
    Suppose that $S\subseteq Q_a$ with $\partial S$ containing arcs of circles. Recall from Theorem~\ref{thm:backgroundProperties} that if $S$ is a minimizer, then $\partial S$ must have constant mean curvature.
    We now consider the following cases.
    \begin{itemize}
        \item \textbf{Case 1:} Suppose that $S$ is a Type 2 Region.
        Then the region $S$ can only bound an area, $A=\frac{1}{2}(1-a)\cdot\left(\frac{1}{2}(1-a)+a\right)$ and this area can be bounded by line segments extending orthogonally from the corners of the central square to the outer square. See the below figure.
        \begin{center}
            \begin{tikzpicture}[scale=2]
                \draw[ultra thick] (0,0) rectangle (1,1);
                \filldraw[ultra thick,fill=gray] (0.4,0.4) rectangle (0.6,0.6);
                \draw[thick] (0,0.8) arc (90:35:0.5);
                \draw[thick] (0.8,1) arc (0:-55:0.5);
                \draw[thick,dashed] (0.4,0.6) -- (0,0.6);
                \draw[thick,dashed] (0.6,0.6) -- (0.6,1);
            \end{tikzpicture}
        \end{center}
        
        Additionally, we have that the rectangular region has a perimeter of $1-a$ and the perimeter of the Type 2 Region is greater than $1-a$.
        Thus Type 2 Regions are not minimizers.
        
        \item \textbf{Case 2:} Suppose that $S$ is a Type 5 region.
        We can make a similar argument as in Case 1. Since a Type 5 Region can only bound an area, $A=\frac{1}{2}|Q_a|$ then we can bound this area by parallel line segments extending from the corners of the central square to the outer square. See the below figure.
        \begin{center}
            \begin{tikzpicture}[scale=2]
                \draw[ultra thick] (0,0) rectangle (1,1);
                \filldraw[ultra thick,fill=gray] (0.4,0.4) rectangle (0.6,0.6);
                \draw[thick] (0.2,1) arc (180:235:0.5);
                \draw[thick] (0.8,0) arc (0:55:0.5);
                \draw[thick,dashed] (0.4,0.6) -- (0.4,1);
                \draw[thick,dashed] (0.6,0.4) -- (0.6,0);
            \end{tikzpicture}
        \end{center}
        
        Since the perimeter of the Type 5 Region is greater than that of the region bounded by lines, then Type 5 Regions are not minimizers.
    \end{itemize}
\end{proof}

\begin{lemma}\label{lem:no_horns}
    Let $S \subseteq Q_a$ be a minimizer with $\partial S$ two circular arcs. Then $S$ cannot be a Type 1 Region.
\end{lemma}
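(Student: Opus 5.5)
We argue by contradiction, following the pattern of Lemmas \ref{lem:QCirclesMinimizers} and \ref{lemma:DARE}. Suppose $S$ is a Type 1 minimizer. Its boundary consists of two circular arcs, each starting orthogonally on the same side of the outer square and ending orthogonally on the adjacent face of the central square; one arc lies on each side of that face. By Theorem \ref{thm:backgroundProperties} both arcs have the same curvature, so they have a common radius $r$. The orthogonality conditions then fix each center at the intersection of the line containing the outer side with the line containing the face of the central square. By symmetry the two arcs are mirror images across the vertical line $x=\tfrac12$, and each is a quarter circle of radius $r$ centered at a corner of the strip $[\tfrac12(1-a),\tfrac12(1+a)]\times[\tfrac12(1+a),1]$. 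That strip has height $h=\tfrac12(1-a)$, so orthogonality at both endpoints forces $r=h$.

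Thus a Type 1 region is rigid, up to which of the two complementary pieces we call $S$. Its relative perimeter is
\[
P(S)=2\cdot\tfrac{\pi}{2}\cdot\tfrac{1-a}{2}=\tfrac{\pi}{2}(1-a)>1-a .
\]
Its area is one of two values: the strip $a h$ plus two quarter disks, namely $a\cdot\tfrac{1-a}{2}+\tfrac{\pi}{8}(1-a)^2$, or the complement of this in the strip if the arcs bow the other way. Because the area is pinned, the argument reduces to finding, for this specific area $A$, a competitor with relative perimeter at most $1-a$ (or at most $\sqrt{\pi A}$, or at most $1$).

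For $a\ge\tfrac13$, Lemma \ref{lem:boundedBy1MinusA} gives $f_a(A)\le 1-a<\tfrac{\pi}{2}(1-a)=P(S)$, which is the desired contradiction. For $a<\tfrac13$ we instead build competitors by hand. If $A\le\tfrac{\pi}{8}(1-a)^2$, a quarter circle of perimeter $\sqrt{\pi A}\le\tfrac{\pi}{2\sqrt2}(1-a)<P(S)$ fits in a corner. Otherwise, whenever $A\in[\tfrac14(1-a)^2,\tfrac14(1-a)^2+a(1-a)]$, an adjacent-lines region has perimeter $1-a$, and whenever $A\in[\tfrac12(1-a),\tfrac12|Q_a|]$, an opposite-lines region does too. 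Since $A\le a(1-a)/2+\tfrac{\pi}{8}(1-a)^2$, which is small, the remaining possibility is handled by a single vertical segment of length $1$: for $A<\tfrac12(1-a)$ this has perimeter $1$, and $P(S)=\tfrac{\pi}{2}(1-a)>1$ precisely when $a<1-\tfrac{2}{\pi}\approx0.363$, which covers every $a<\tfrac13$.

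The main obstacle is the rigidity step. We must verify that the equal-curvature and orthogonality conditions really force both centers onto the corners of the strip with $r=\tfrac12(1-a)$, ruling out arcs of larger radius that meet the central square's face at an interior point. If this fails, we fall back on bounding $P(S)\ge\sqrt{\pi|S|}$ via a reflection or comparison argument as in Lemma \ref{lem:quarterCircleBound}, and rerun the case split above.
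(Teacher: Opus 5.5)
Your rigidity step is false, and the rest of the argument rests on it. The arcs of a Type 1 region do not meet a face of the central square orthogonally. They end at the upper corners $\bigl(\tfrac{1-a}{2},\tfrac{1+a}{2}\bigr)$ and $\bigl(\tfrac{1+a}{2},\tfrac{1+a}{2}\bigr)$ of the central square, which are non-convex corners of $Q_a$, and there Theorem~\ref{thm:backgroundProperties} imposes no angle condition. Only the outer side forces orthogonality. So each center lies on the line $y=1$, and the common radius is $r=\tfrac{1-a}{2}\csc\theta$ with $\theta\in(0,\tfrac{\pi}{2})$ free. Type 1 regions therefore form a one-parameter family. Their perimeter $P=(1-a)\theta\csc\theta$ takes every value in $\bigl(1-a,\tfrac{\pi}{2}(1-a)\bigr)$, and their area sweeps $\bigl(\tfrac{a(1-a)}{2},\tfrac{a(1-a)}{2}+\tfrac{\pi}{8}(1-a)^2\bigr)$. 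Your rigid configuration is the degenerate endpoint $\theta=\tfrac{\pi}{2}$. Its arcs start at the convex corners $(0,1)$ and $(1,1)$, so it is excluded anyway. Since $P$ can be arbitrarily close to $1-a<1$, neither $P(S)>1$ nor $P(S)>\sqrt{\pi|S|}$ comes for free.

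Your case $a\ge\tfrac13$ survives unchanged, because $P>1-a\ge f_a(|S|)$ for every member of the family. For $a<\tfrac13$ the fallback does not close the argument, for two reasons.

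First, the bound $P\ge\sqrt{\pi|S|}$ genuinely depends on $a$. The quantity $F(\theta)=P^2-\pi|S|$ is minimized at $\theta=\tfrac{\pi}{4}$ with value $\tfrac{\pi}{4}(1-a)(1-3a)$, which is negative for $a>\tfrac13$. So no argument modeled on Lemma~\ref{lem:quarterCircleBound} can give the bound without using $a<\tfrac13$ explicitly. This computation is exactly what the paper uses when $|S|\le\tfrac{\pi}{8}(1-a)^2$.

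Second, and more seriously, when $|S|>\tfrac{\pi}{8}(1-a)^2$ that bound only gives $P>\tfrac{\pi}{2\sqrt2}(1-a)$. This is below $1$ once $a>1-\tfrac{2\sqrt2}{\pi}\approx 0.0997$. For instance, at $a=0.11$ Type 1 regions realize every area in $\bigl(\tfrac{\pi}{8}(1-a)^2,\tfrac{1}{\pi}\bigr]\approx(0.311,0.318]$. These areas lie outside $\mathcal{D}_{AL}$ and $\mathcal{D}_{OL}$, a quarter circle no longer fits, and $\sqrt{\pi|S|}\le 1$ does not beat the full line.

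The missing idea is the paper's comparison:
\begin{enumerate}
\item Replace the strip above the central square by the outer corner square $\bigl[\tfrac{1+a}{2},1\bigr]^2$, and attach the two lunes to its two inner sides, so both arcs emanate from the corner $\bigl(\tfrac{1+a}{2},\tfrac{1+a}{2}\bigr)$.
\item This keeps $P$ unchanged and raises the area by $\bigl(\tfrac{1-a}{2}\bigr)^2-a\,\tfrac{1-a}{2}>0$, which uses $a<\tfrac13$.
\item Now decrease $\theta$. The perimeter strictly decreases, while the area falls continuously to $\bigl(\tfrac{1-a}{2}\bigr)^2<|S|$.
\item By the intermediate value theorem there is an equal-area competitor with strictly smaller perimeter.
\end{enumerate}
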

\begin{proof}
    Consider the case when $\frac{1}{3}\leq a<1$.
    Let $S \subseteq Q_a$ be a Type 1 Region as in the figure below.
    \begin{center}
        \begin{tikzpicture}[scale=2]
            \draw[ultra thick] (0,0) rectangle (1,1);
            \filldraw[ultra thick,fill=gray] (0.4,0.4) rectangle (0.6,0.6);
            \draw[thick] (0.2,1) arc (180:235:0.5);
            \draw[thick] (0.8,1) arc (0:-55:0.5);
            \draw[dashed] (0.4,0.6) -- (0.4,1);
            \draw[dashed] (0.6,0.6) -- (0.6,1);
            \draw[dashed] (0.4,0.6) -- (0,0.6);
            \draw[dashed] (0.6,0.6) -- (1,0.6);
        \end{tikzpicture}
    \end{center}
    Then we have the following bounds on the area of $S$,
    \[\frac{a}{2}(1-a)<|S|<\frac{1}{2}(1-a).\]
    Now consider a region bounded by line segments extending orthogonally from adjacent sides of the central square to the outer square. See the below figure.
    \begin{center}
        \begin{tikzpicture}[scale=2]
             \draw[ultra thick] (0,0) rectangle (1,1);
            \filldraw[ultra thick,fill=gray] (0.4,0.4) rectangle (0.6,0.6);
            \draw[dashed] (0.5,0) -- (0.5,0.4);
            \draw[dashed] (0,0.5) -- (0.4,0.5);
        \end{tikzpicture}
    \end{center}
    We can determine that the perimeter of this region is $1-a$.
    Additionally the area enclosed in this region, $|S'|$ is bounded by
    \[\left(\frac{1}{2}(1-a)\right)^2\leq |S'|\leq\left(\frac{1}{2}(1-a)\right)^2+a(1-a).\]
    Then given our assumption on $a$, we can determine that
    \[\left(\frac{1}{2}(1-a)\right)^2\leq\frac{a}{2}(1-a)<|S|<\frac{1}{2}(1-a)\leq\left(\frac{1}{2}(1-a)\right)^2+a(1-a).\]
    Thus when $\frac{1}{3}\leq a<1$, we can replace $S$ with $S'$ such that $|S| = |S'|$. Furthermore, $P(S) > P(S')$. Hence, $S$ is not a minimizer.

    Based on the geometry of Type 1 regions, we can express their area and perimeter as follows:
    \[
        A_1(\theta,a) = \left(\frac{1-a}{2}\right)^2\left[\theta\csc^2\theta - \cot\theta\right] + a\left(\frac{1-a}{2}\right),
    \]
    and
    \[
        P_1(\theta,a) = (1-a)\theta\csc\theta,
    \]
    where $\theta \in (0,\frac{\pi}{2})$ is the angle shown in the figure below.

    \begin{center}
        \begin{tikzpicture}[scale=4]
            \draw[ultra thick] (0,0) rectangle (1,1);
            \filldraw[ultra thick,fill=gray] (0.4,0.4) rectangle (0.6,0.6);
            \draw[thick] (0.2,1) arc (180:234:0.5);
            \draw[thick] (0.8,1) arc (0:-54:0.5);
            \draw[thick,dashed] (0.4,0.6) -- (0.7,1);
            \draw[thick,dashed] (0.6,0.6) -- (0.3,1);
            \draw (0.55,1) arc (180:235:0.15);
            \draw (0.45,1) arc (360:305:0.15);
            \draw (0.13,1) -- (0.13,0.925) -- (0.205,0.925);
            \draw (0.87,1) -- (0.87,0.925) -- (0.795,0.925);
            \draw (0.5,0.95) node[anchor=north] {$\theta$};
            \draw (0.5,-0.05) node[anchor=north] {Type 1 Region};
        \end{tikzpicture}
    \end{center}
    Note that for all fixed $a \in (0,1)$, the function $A_1: (0,\frac{\pi}{2}) \to \mathbb{R}$ is injective. We can see this by taking its derivative:
    \[
        \frac{dA_1}{d\theta} = \frac{(1-a)^2\csc^2\theta(\theta\cot\theta-1)}{2}.
    \]
    Since $\tan\theta > \theta$ when $\theta \in (0,\frac{\pi}{2})$, we have that $\theta\cot\theta - 1 < 0$, making the derivative always negative. Hence, $A_1$ as a function of $\theta$ is monotonic. Thus, the perimeter of a Type 1 region is implicitly a function of its enclosed area.

    Let $a$ be a fixed value in $(0,\frac{1}{3})$, and $|S| \leq \frac{\pi}{8}(1-a)^2$. With this amount of area, we can replace $S$ with a quarter circle region $S'$ such that $|S| = |S'|$. It suffices to show that $P(S') \leq P(S)$. This inequality is true if and only if $\sqrt{\pi |S'|} \leq P_1(S)$. After squaring both sides and rearranging, we need
    \begin{equation}\label{eq:QC-better}
        P_1(S)^2 - \pi |S| \geq 0
    \end{equation}
    to be true. Expressing these quantities in terms of $\theta$ we need
    \[
        (1-a)^2\theta^2\csc^2\theta - \pi\left(\frac{1-a}{2}\right)^2\left[\theta\csc^2\theta - \cot\theta\right] + \pi a\left(\frac{1-a}{2}\right) \geq 0.
    \]
    to be true. For ease of notation, we define $F(\theta)$ to be the left-hand side of this inequality. Then taking its derivative, we have
    \[
        \frac{dF}{d\theta} = \frac{(\pi-4\theta)(1-a)^2\csc^2\theta(\theta\cot\theta-1)}{2}.
    \]
    From the analysis of $\frac{dA_1}{d\theta}$, the last three factors will result in a negative sign. Thus, the sign of $\frac{dF}{d\theta}$ is determined by $(\pi - 4\theta)$. Hence, for all $\theta \in (0,\frac{\pi}{4})$, $F$ is decreasing, and for all $\theta \in (\frac{\pi}{4},\frac{\pi}{2})$, $F$ in increasing. Therefore, the minimum value of $F$ occurs at $\theta = \frac{\pi}{4}$. Substituting into $F$ gives us $F(\frac{\pi}{4}) = \frac{\pi}{4}(1-a)(1-3a)$. Since $a \in (0,\frac{1}{3})$, $F(\frac{\pi}{4}) \geq 0$, and thus the inequality \eqref{eq:QC-better} is achieved.
    \begin{figure}[ht]
        \begin{tikzpicture}[scale=3]
            \filldraw[thick,fill=gray!50] (0.05,1) arc (180:263:0.445) -- (0.44,1);
            \filldraw[thick,fill=gray!50] (0.95,1) arc (0:-83:0.445) -- (0.56,1);
            \draw[ultra thick] (0,0) rectangle (1,1);
            \filldraw[ultra thick,fill=gray] (0.45,0.45) rectangle (0.55,0.55);
            \draw (0.5,-0.05) node[anchor=north] {$S \subseteq Q_a$};
        \end{tikzpicture}
        \hfill
        \begin{tikzpicture}[scale=3]
            \filldraw[thick,fill=gray!50] (0.165,1) arc (180:263.5:0.439) -- (0.557,1);
            \filldraw[thick,fill=gray!50] (1,0.165) arc (-90:-173.5:0.439) -- (1,0.557);
            \draw[dashed,thick] (0.3736,1) arc (180:270:0.6263);
            \draw[ultra thick] (0,0) rectangle (1,1);
            \filldraw[ultra thick,fill=gray] (0.45,0.45) rectangle (0.55,0.55);
            \draw (0.5,-0.05) node[anchor=north] {$S_B \subseteq Q_a$};
        \end{tikzpicture}
        \hfill
        \begin{tikzpicture}[scale=3]
            \filldraw[thick,fill=gray!50] (0.3,1) arc (180:240:0.51) -- (0.557,1);
            \filldraw[thick,fill=gray!50] (1,0.3) arc (-90:-150:0.51) -- (1,0.557);
            \draw[dashed,thick] (0.3736,1) arc (180:270:0.6263);
            \draw[ultra thick] (0,0) rectangle (1,1);
            \filldraw[ultra thick,fill=gray] (0.45,0.45) rectangle (0.55,0.55);
            \draw (0.5,-0.05) node[anchor=north] {$S_E \subseteq Q_a$};
        \end{tikzpicture}
        \caption{Equivalent area of Type 1 region with smaller perimeter}
        \label{fig:type1elimination}
    \end{figure}
    
    Finally, let $a$ be a fixed value in $(0,\frac{1}{3})$, and $|S| > \frac{\pi}{8}(1-a)^2$. Consider a region $S_B \subseteq Q_a$, where the two `arc regions' are positioned such that the arcs emanate from the same corner, see Figure \ref{fig:type1elimination}. Note that since $a < \frac{1}{3}$, we have that $a\left(\frac{1-a}{2}\right) < \left(\frac{1-a}{2}\right)^2$. Hence, $|S| < |S_B|$, while $P(S) = P(S_B)$. Now, reduce the arc length of each of the `arc regions.' This results in a smaller area and smaller perimeter. Reducing the arc length as much as possible results in two line segments. This would result in an area less than $\frac{\pi}{8}(1-a)^2$. Hence, there exists some region $S_E \subseteq Q_a$ such that $|S| = |S_E|$ and $P(S) > P(S_E)$, see Figure \ref{fig:type1elimination}. Thus, $S$ cannot be a minimizer.
\end{proof}

Therefore, by Lemmas \ref{lemma:DARE} and \ref{lem:no_horns}, if $S \subseteq Q_a$ is a minimizer with $\partial S$ consisting of two circular arcs, then $S$ can only be of Type 3 or Type 4.

\subsection{Number of Boundary Components}
To begin we need a result which is shown within the proof of Lemma \ref{lem:QCirclesMinimizers}.
\begin{corollary}\label{cor:bigArcArea}
    Let $a < \frac{1}{3}$ and $S \subseteq Q_a$ such that $|S| > \frac{\pi}{8}(1-a)^2$ and $\partial S$ is a single arc of a circle. Then $P(S) > 1$.
\end{corollary}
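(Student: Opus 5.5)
The plan is to extract the corollary directly from the case analysis in the proof of Lemma \ref{lem:QCirclesMinimizers}, checking that no step there used minimality beyond the regularity guaranteed by Theorem \ref{thm:backgroundProperties}. First I will fix the setting. Since $\partial S$ is a single circular arc that meets $\partial Q_a$ orthogonally, avoids convex corners and meets each non-convex corner at most once, $S$ falls into exactly one of Cases I--X from Lemma \ref{lem:quarterCircleBound}. The corollary implicitly carries this standing assumption. I will state it explicitly, since an arc meeting the boundary at arbitrary angles would not be covered.

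Next I will go through the cases in turn, using only the hypotheses $|S|>\frac{\pi}{8}(1-a)^2$ and $a<\frac13$. These give $1-a>\frac23$ and $\sqrt{|S|}>\frac{\sqrt{\pi}}{2\sqrt2}(1-a)$.

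\textbf{Cases II--V.} The closed-form expressions $P(S)=\sqrt{k\pi|S|}$ with $k=2,3,4$ (and Case V, where $P(S)>2\sqrt{\pi|S|}$) give $P(S)>1$ exactly as computed before.

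\textbf{Case VI.} The radius satisfies $r>\frac{1+a}{\sqrt2}$, so $P(S)>\frac{\pi}{2\sqrt2}>1$.

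\textbf{Cases VII, VIII, X.} Here $r>\sqrt{|S|/\pi}$ and the exterior angle exceeds $\frac{3\pi}{2}$, which gives $P(S)>\frac{3\pi}{4\sqrt2}(1-a)>1$.

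\textbf{Case IX.} I will use the explicit formula for $r$ in terms of $\theta$ together with the inequality $x(\pi+x)-\pi\sin x>0$. This gives $P(S)>\frac{\pi}{3}>1$.

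The one genuinely new point is Case I, the quarter circle. In the lemma this case was dismissed by the assumption that $S$ was not a quarter circle, so it was never actually estimated. Here I must handle it directly. A quarter-circle region in a corner of $Q_a$ has radius at most $\frac12(1-a)$, so its area is at most $\frac{\pi}{8}(1-a)^2$. The hypothesis $|S|>\frac{\pi}{8}(1-a)^2$ therefore makes Case I impossible. I expect this to be the only obstacle, and the main care is checking that a quarter arc centered at an outer corner cannot have a larger radius. Such an arc would have to pass through the central square, so this checking is likewise only a matter of verifying the geometry.

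Combining the cases yields $P(S)>1$ in every admissible configuration, which proves the corollary.
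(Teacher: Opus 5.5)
Your route is the paper's. Its proof of the corollary is only a pointer to the case analysis inside Lemma \ref{lem:QCirclesMinimizers}. Two of your remarks are correct and make that pointer honest: the corollary tacitly assumes the regularity of Theorem \ref{thm:backgroundProperties}, and Case I must be excluded by the area hypothesis rather than by assumption, as it is in the lemma. However, the argument you give for Case I, the one step you flag as new, is wrong as written. A quarter arc centered at an outer corner, say $(0,0)$, is not limited to radius $\frac{1}{2}(1-a)$. Every point of the hole $\left(\frac{1-a}{2},\frac{1+a}{2}\right)^2$ is at distance at least $\frac{1-a}{\sqrt2}$ from $(0,0)$. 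So for $\frac{1-a}{2}<r<\frac{1-a}{\sqrt2}$ the arc passes outside the hole's near corner, and the quarter disk lies in $Q_a$; your claim that such an arc ``would have to pass through the central square'' fails. Separately, a radius of $\frac{1}{2}(1-a)$ would give area $\frac{\pi}{4}r^2=\frac{\pi}{16}(1-a)^2$, not $\frac{\pi}{8}(1-a)^2$. Your two errors happen to land on the right number.

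The correct bound is $r\le\frac{1-a}{\sqrt2}$, with equality for the arc through the hole's corner $\left(\frac{1-a}{2},\frac{1-a}{2}\right)$. For $\frac{1-a}{\sqrt2}<r<\frac{1+a}{\sqrt2}$ the arc's midpoint $\left(\frac{r}{\sqrt2},\frac{r}{\sqrt2}\right)$ lies in the open hole, and for larger radii the arc goes around the hole, which is Case VI (already handled). Hence a Case I region has $|S|=\frac{\pi}{4}r^2\le\frac{\pi}{8}(1-a)^2$. This is exactly the ``largest quarter circle'' bound used in Lemma \ref{lem:boundedBy1MinusA}, and the strict hypothesis $|S|>\frac{\pi}{8}(1-a)^2$ rules Case I out, including the equality case. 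With that repair, the rest of your case-by-case transcription of Cases II--X goes through exactly as in the paper.
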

\begin{proof}
    Within the proof of Lemma \ref{lem:QCirclesMinimizers} we have exactly this result.
\end{proof}

\begin{lemma}\label{lem:replaceArcRegions}
    Let $a < \frac{1}{3}$ and $S \subseteq Q_a$ such that $\frac{\pi}{8}(1-a)^2 < |S| \leq \frac{1}{2}|Q_a|$. Additionally, let $\partial S$ consist of arcs of circles with the area of each component $|S_i| \leq \frac{\pi}{8}(1-a)^2$. Then there exists a region $S' \subseteq Q_a$ with the following properties:
    \begin{itemize}
        \item $|S| = |S'|$,
        \item $P(S) \geq P(S')$,
        \item $\partial S'$ consists of equally sized quarter circles.
    \end{itemize}
\end{lemma}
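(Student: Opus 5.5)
Let $S_1,\dots,S_k$ denote the components of $S$, each bounded by a single circular arc with $|S_i| \le \frac{\pi}{8}(1-a)^2$. By the hypothesis on areas, each $S_i$ fits as a quarter-circle region in a corner of $Q_a$. The plan has three steps: replace every component by a quarter circle of the same area, then equalize the radii by concavity, then check that the result fits in $Q_a$.

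\textbf{Step 1 (replacement).} Apply Lemma \ref{lem:quarterCircleBound} to each component. Every admissible single-arc region satisfies $P(S_i) \ge \sqrt{\pi |S_i|}$, so
\[
  P(S) = \sum_i P(S_i) \ge \sum_i \sqrt{\pi|S_i|}.
\]
If some component does not satisfy the hypotheses of Theorem \ref{thm:backgroundProperties}, the case computations in that lemma still bound it from below, or we simply assume these hypotheses as the lemma does.

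\textbf{Step 2 (equalizing).} The function $x \mapsto \sqrt{x}$ is concave, and $|S| > \frac{\pi}{8}(1-a)^2$ forces $k \ge 2$. Among all configurations of $k$ quarter circles with total area $|S|$, we want the one with the smallest sum of square roots. Since concavity pushes toward the boundary of the region, that minimum is attained when the areas are as unequal as the constraints $0 \le x_i \le \frac{\pi}{8}(1-a)^2$ allow. This is the wrong direction for producing \emph{equal} sizes, so the statement's ``equally sized'' has to be read differently.

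The natural fix is to fix the number of arcs $m = \lceil |S| / (\frac{\pi}{8}(1-a)^2) \rceil$ and use $m$ equal quarter circles of area $|S|/m$. Then
\[
  \sum_{i=1}^{m} \sqrt{\pi |S|/m} = \sqrt{\pi m |S|},
\]
and we must compare this with $\sum_{i=1}^{k} \sqrt{\pi |S_i|}$. Since each $|S_i| \le M := \frac{\pi}{8}(1-a)^2$, we have $\sqrt{|S_i|} \ge |S_i|/\sqrt{M}$, so
\[
  \sum_i \sqrt{\pi|S_i|} \ge \sqrt{\pi}\,|S|/\sqrt{M}.
\]
This must be compared with $\sqrt{\pi m |S|}$, i.e.\ we need $m \le |S|/M$, which holds only when $|S|/M$ is an integer. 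So this equal-$m$ comparison does not close in general. I would instead use $m' = k$ equal pieces, which Jensen gives in the wrong direction, or else show that at most $4$ pieces are ever needed.

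\textbf{Step 3 (fitting and the main obstacle).} There are only four convex corners, so at most four quarter circles fit without overlap. The key tool is Corollary \ref{cor:bigArcArea} together with Lemma \ref{lem:boundedBy1}: the total perimeter is at most $1$, and each quarter circle of area at least roughly $M$ has perimeter about $\frac{\pi}{2\sqrt{2}}(1-a)$, which bounds $k$. This lets me reduce to $k \le 4$ pieces placed in the four corners. The distribution of area among the corners is then the main obstacle: reconciling ``equally sized'' with concavity. I expect the authors intend either equal radii after merging (two pieces merging into one larger quarter circle when allowed), or that the boundary constraint forces all pieces to the maximal size except one. I would first try proving that any minimizer among corner quarter-circle configurations has all non-maximal pieces equal, using a Lagrange-multiplier/equal-curvature argument. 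Constant mean curvature from Theorem \ref{thm:backgroundProperties} forces equal radii among the arcs that actually appear, and that yields the equal sizes.
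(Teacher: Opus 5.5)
\textbf{Your proposal does not prove the lemma, and it cannot be completed, because the statement as written is false.} Steps 2 and 3 never construct $S'$. Both closing tools are also unavailable, because the lemma does not assume $S$ is a minimizer. Lemma \ref{lem:boundedBy1} bounds the profile $f_a$, not $P(S)$, so it cannot cap $k$. Theorem \ref{thm:backgroundProperties} lists properties that minimizers have, and even equal radii for the arcs of $S$ would say nothing about the quarter circles of $S'$.

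The obstruction you found in Step 2 is real, and it defeats the statement itself. With $M=\frac{\pi}{8}(1-a)^2$, let $S$ be a quarter disk of area $M-\delta$ in one corner of $Q_a$ together with a quarter disk of area $\epsilon$ in another corner, where $0<\delta<\epsilon\le M/8$. Every hypothesis of the lemma holds, and $P(S)<\sqrt{\pi M}+\sqrt{\pi M/8}<\sqrt{2\pi M}$. A region bounded by $m$ equal quarter circles of total area $|S|>M$ needs $m\ge 2$, since quarter-circle regions sit only at the four outer corners and hold area at most $M$. Its perimeter is therefore $\sqrt{m\pi|S|}>\sqrt{2\pi M}>P(S)$, so no such $S'$ exists.

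\textbf{How the paper argues.} For $a\le\frac{\pi-2}{\pi+2}$ one has $2M\ge\frac12|Q_a|$, and for the remaining $a<\frac13$ one has $3M\ge\frac12|Q_a|$. The paper groups the components into two (resp.\ three) collections $T_j$ with $|T_j|\le M$. It replaces each collection by a single corner quarter circle, using Lemma \ref{lem:quarterCircleBound} and the subadditivity $\sum_i\sqrt{\pi|S_i|}\ge\sqrt{\pi\sum_i|S_i|}$. That merging step is what your Step 1 lacks: combining pieces only lowers perimeter, so there is no need to control $k$ through a perimeter budget. The grouping itself is not always possible, e.g.\ three components of area $0.6M$ cannot be split into two groups of area at most $M$.

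The paper then equalizes the radii and asserts, citing Theorem \ref{thm:backgroundProperties}, that the perimeter does not increase. This is exactly the step your concavity remark refutes. At fixed total area, $\sum_j\sqrt{\pi x_j}$ is \emph{maximized} when the $x_j$ are equal, and Theorem \ref{thm:backgroundProperties} cannot show that a given modification lowers perimeter. A correct version therefore needs an extra hypothesis. The lemma's only use, in Lemma \ref{lem:numberOfBoundaryComponents}, concerns a minimizer, where Theorem \ref{thm:backgroundProperties} does give all arcs a common radius. Any repair must exploit that information, or establish the bound $P(S)>1$ needed there by other means.
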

\begin{proof}
    Let $a \leq \frac{\pi - 2}{\pi+2}$. Then we have
    \[
        2\cdot \frac{\pi}{8}(1-a)^2 \geq \frac{1}{2}|Q_a| = \frac{1}{2}(1-a^2).
    \]
    In other words, the area $|S|$ will fit into two quarter circles. We will now partition $S$ into two areas:
    \[
        S = S_1 \cup S_2 \cup \cdots \cup S_n = T_1 \cup T_2,
    \]
    where $T_1 = S_1 \cup \cdots \cup S_k$ and $T_2 = S_{k+1} \cup \cdots \cup S_n$ such that $|T_1|, |T_2| \leq \frac{\pi}{8}(1-a)^2$. Now let $T_1'$ be a quarter-circle region with the same area as $T_1$, and let $T_2'$ be the same for $T_2$ in the opposite corner. Now by Lemma \ref{lem:quarterCircleBound} we have
    \[
        P(T_1) = \sum_{i=1}^k P(S_i) \geq \sum_{i=1}^k \sqrt{\pi |S_i|} \geq \sqrt{\pi \sum_{i=1}^k|S_i|} = \sqrt{\pi |T_1|} = P(T_1').
    \]
    The same holds true for $T_2'$. Let $S'$ be the region $T_1' \cup T_2'$ with the two quarter-circle regions adjusted so that they have the same radius while maintaining the same overall area. By Theorem \ref{thm:backgroundProperties}, this results in a perimeter which is equal to or smaller than the original. Thus,
    \[
        P(S) \geq P(T_1' \cup T_2') \geq P(S').
    \]
    Let $\frac{\pi-2}{\pi+2} < a < \frac{1}{3}$. Then we have
    \[
        3\cdot \frac{\pi}{8}(1-a)^2 \geq \frac{1}{2}|Q_a| = \frac{1}{2}(1-a^2).
    \]
    In other words, the area $|S|$ will fit into three quarter circles. We will now partition $S$ into three areas:
    \[
        S = S_1 \cup S_2 \cup \cdots \cup S_n = T_1 \cup T_2 \cup T_3,
    \]
    where $T_1 = S_1 \cup \cdots \cup S_k$, $T_2 = S_{k+1} \cup \cdots \cup S_\ell$, and $T_3 = S_{\ell+1} \cup \cdots \cup S_n$ such that $|T_1|, |T_2|, |T_3| \leq \frac{\pi}{8}(1-a)^2$. Now let $T_1'$ be a quarter-circle region with the same area as $T_1$, and let $T_2'$ and $T_3'$ be the same for $T_2$ and $T_3$, respectively. Now by Lemma \ref{lem:quarterCircleBound} we have
    \[
        P(T_1) = \sum_{i=1}^k P(S_i) \geq \sum_{i=1}^k \sqrt{\pi |S_i|} \geq \sqrt{\pi \sum_{i=1}^k|S_i|} = \sqrt{\pi |T_1|} = P(T_1').
    \]
    The same holds true for $T_2'$ and $T_3'$. Let $S'$ be the region $T_1' \cup T_2' \cup T_3'$ with the three quarter-circle regions adjusted so that they have the same radius while maintaining the same overall area. By Theorem \ref{thm:backgroundProperties}, this results in a perimeter which is equal to or smaller than the original. Thus,
    \[
        P(S) \geq P(T_1' \cup T_2' \cup T_3') \geq P(S').
    \]
\end{proof}

With Lemma \ref{lem:replaceArcRegions} in hand, we can now completely characterize the number of boundary components that minimizing regions have.

\begin{lemma}\label{lem:numberOfBoundaryComponents}
     Let $S \subseteq Q_a$ be a minimizer. Then we have the following:
    \begin{itemize}
        \item if $\partial S$ touches the inner and outer boundary of $Q_a$, then $\partial S$ consists of exactly 2 components,
        \item if $\partial S$ touches only the inner or outer boundary of $Q_a$, then $\partial S$ consists of exactly 1 component.
    \end{itemize}
\end{lemma}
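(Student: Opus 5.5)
We need to show that a minimizer has one or two boundary components, and we split according to whether $\partial S$ touches the inner boundary (the central square), the outer boundary, or both. Throughout we use the bounds $f_a\le 1$ (Lemma~\ref{lem:boundedBy1}) and, for $a\ge\frac13$, $f_a\le 1-a$ (Lemma~\ref{lem:boundedBy1MinusA}). We also use Theorem~\ref{thm:backgroundProperties}: every component of $\partial S\cap\operatorname{int}(Q_a)$ is a line segment or circular arc of a common curvature, meets $\partial Q_a$ orthogonally, avoids convex corners, and there are finitely many components. By the symmetry $f_a(A)=f_a(|Q_a|-A)$ we may assume $|S|\le\frac12|Q_a|$.

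\textbf{Step 1: at most two components.} Suppose $\partial S$ has $n\ge 3$ components.

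If any component is a segment, all curvatures vanish, so every component is a segment. Each segment has length at least $\frac12(1-a)$, and the total is at most $1$. This forces $a\ge\frac13$ when $n=3$, and then $P(S)\ge\frac32(1-a)>1-a$, contradicting Lemma~\ref{lem:boundedBy1MinusA}. For $n\ge4$ we get $P(S)\ge 2(1-a)$, which exceeds $1$ when $a<\frac12$, and exceeds $1-a$ otherwise; either way a contradiction.

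If all components are arcs, first suppose some component encloses area more than $\frac{\pi}{8}(1-a)^2$.
\begin{itemize}
\item For $a<\frac13$, Corollary~\ref{cor:bigArcArea} gives that this component alone has perimeter greater than $1$, contradicting Lemma~\ref{lem:boundedBy1}.
\item For $a\ge\frac13$, the estimate $P(S)>\sqrt{\pi|S|}>1-a$ from the proof of Lemma~\ref{lem:QCirclesMinimizers} contradicts Lemma~\ref{lem:boundedBy1MinusA}.
\end{itemize}
Otherwise every component is small. If moreover $|S|\le\frac{\pi}{8}(1-a)^2$, concavity of $\sqrt{\cdot}$ together with Lemma~\ref{lem:quarterCircleBound} gives $P(S)\ge\sqrt{\pi|S|}$. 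That value is achieved by a single quarter circle, so $S$ is not a minimizer, or in the equality case $S$ is a single quarter circle. If instead $|S|>\frac{\pi}{8}(1-a)^2$, Lemma~\ref{lem:replaceArcRegions} (for $a<\frac13$) replaces $S$ by at most three equal quarter circles with no larger perimeter. We then compare $k$ equal quarter circles with the line regions (perimeter $1-a$ or $1$) to exclude $k=3$. I expect this comparison to be the main computational obstacle, since it requires tracking $a$ near $\frac{\pi-2}{\pi+2}$ and $\frac13$.

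\textbf{Step 2: touching both boundaries.} Now suppose $\partial S$ meets both the inner and outer boundary. Then some component runs from the outer square to the central square. If $\partial S$ had exactly one component, that component would be a single segment or arc from outer to inner. Removing it from $Q_a$ does not disconnect the annulus-like domain $Q_a$, so it cannot bound a region $S$, a contradiction. By Step 1 there are then exactly two components. Two generic arcs would each have to separate as well, which is the mechanism behind Types 3 and 4 and the line pairs.

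\textbf{Step 3: touching only one boundary.} Finally, suppose $\partial S$ touches only the inner boundary, or only the outer one. Then each component separately cuts off a piece of $Q_a$; here I should check that every component is itself separating. Two such components are then a disjoint union of single-component regions of the types already analyzed in Lemma~\ref{lem:QCirclesMinimizers}: quarter circles, or the Case V/VII/VIII/IX-type arcs. Merging them reduces perimeter in the same way as in Step 1. Concretely, two quarter circles of areas $A_1,A_2$ have total perimeter $\sqrt{\pi A_1}+\sqrt{\pi A_2}>\sqrt{\pi(A_1+A_2)}$, so a single quarter circle does strictly better when it fits. When it does not fit, we fall back on the large-area bounds above, showing perimeter exceeding $1$ or $1-a$. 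Hence exactly one component remains.
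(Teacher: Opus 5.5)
\textbf{Gap in Step 1: arcs running from the central square to the outer square.} In the arc case, every tool you invoke presupposes that each component of $\partial S$ cuts off its own piece $S_i$ with $S=\bigsqcup S_i$. This holds for Corollary~\ref{cor:bigArcArea}, for Lemma~\ref{lem:quarterCircleBound} combined with concavity, and for Lemma~\ref{lem:replaceArcRegions}. It fails as soon as some arc is a cross-cut from the central square to the outer square. Your Step~2 leans on Step~1 for ``at most two components'' precisely when such cross-cuts are present, so the first bullet is left unproved.

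The dangerous configuration is two cross-cut arcs (as in Types 3/4) together with a third small arc, say a tiny quarter circle in a corner. Counting lengths only gives $P(S)>2\cdot\frac12(1-a)=1-a$, and for $a<\frac13$ this does not contradict $f_a\le 1$.

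The missing idea, which is how the paper argues, is to use the common curvature:
\begin{itemize}
\item An arc meeting an outer side and an inner side orthogonally has its center on both lines. So these sides are perpendicular, and reaching the central square forces radius $R\ge\frac12(1-a)$. The same bound holds if the arc ends at an inner corner.
\item All components share this $R$.
\item An arc touching only one boundary turns through at least a right angle.
\end{itemize}
Hence, for $a<\frac13$,
\[
P(S)\ge\frac{\pi}{2}R+(1-a)>\Bigl(1+\frac{\pi}{4}\Bigr)(1-a)>\frac{4+\pi}{6}>1 .
\]
The remaining cases close quickly. The same radius estimate handles one cross-cut plus two further arcs. Three or more cross-cuts give $P(S)\ge\frac32(1-a)$, which exceeds $1$ for $a<\frac13$ and exceeds $1-a$ for every $a$. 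For $a\ge\frac13$, any third component on top of two cross-cuts already violates $f_a\le 1-a$.

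\textbf{Two smaller issues.} First, the comparison you call the main computational obstacle is immediate, but it must exclude $k=2$ as well as $k=3$. Your Step~3 needs exactly this: two small separating arcs of large total area are what Lemma~\ref{lem:replaceArcRegions} turns into two quarter circles when $a\le\frac{\pi-2}{\pi+2}$. For $a<\frac13$ and $|S|>\frac{\pi}{8}(1-a)^2$, both $k=2$ and $k=3$ give
\[
\sqrt{k\pi|S|}\ge\sqrt{2\pi|S|}>\frac{\pi}{2}(1-a)>\frac{\pi}{3}>1 .
\]
For $a\ge\frac13$, concavity alone gives $P(S)\ge\sqrt{\pi|S|}>1-a$.

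Second, in Step~2 the hypothesis has to be read as ``some component touches both boundaries,'' as the paper does. Read literally, your claim that some component runs from the outer square to the inner square is false. For example, a corner quarter circle together with an arc around an inner corner touches both boundaries with no cross-cut. Such configurations have to be handled by your Step~3 argument instead.
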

\begin{proof}
    Let $S \subseteq Q_a$ be a minimizer. We will begin with the assumption that a component of $\partial S$ touches both the inner square and the outer boundary of $Q_a$. In order for $\partial S$ to bound a region, $\partial S$ must contain at least one more component which touches both the inner square and the outer boundary of $Q_a$. If $a \geq \frac{1}{3}$, then by Lemma \ref{lem:boundedBy1MinusA} we have $P(S) \leq 1-a$. However, since $\partial S$ consists of at least two components as described earlier, we have $P(S) \geq 2\cdot \frac{1}{2}(1-a) = 1-a$. This implies that $\partial S$ consists of exactly two components, each having length $\frac{1}{2}(1-a)$. 
    
    If on the other hand $a < \frac{1}{3}$, then by Lemma \ref{lem:boundedBy1} we have $P(S) \leq 1$. If $\partial S$ contains a third component which touches both the inner and outer square, then we have
    \[
        P(S) \geq 3 \cdot \frac{1}{2}(1-a) > \frac{3}{2}\cdot \frac{2}{3} = 1,
    \]
    a contradiction. Thus, if $\partial S$ has a third component, it must either only touch the inner square or only touch the outer square. If this third component is a line segment, then it must touch opposite sides of the outer square. In this case we have
    \[
        P(S) \geq 1 + 2\cdot \frac{1}{2}(1-a) > 1,
    \]
    a contradiction. Hence, we can assume that a third component of $\partial S$ is the arc of a circle. By Theorem \ref{thm:backgroundProperties}, this implies all components are arcs of circles with the same radius. In order for the arc of a circle to touch both the inner and outer squares, we must have that it's radius obeys the inequality $R > \frac{1}{2}(1-a)$. Furthermore, if the arc of a circle touches either only the inner square or only the outer square, then it must by at least a quarter circle. Thus, we have
    \[
        P(S) \geq \frac{1}{2}\pi R + 2\cdot \frac{1}{2}(1-a) > \left(\frac{\pi}{4} + 1\right)(1-a) > \frac{\pi + 4}{6} > 1,
    \]
    a contradiction. This concludes the proof of the first bullet point.
    
    For the second bullet point, we assume that all components of $\partial S$ either only touch the inner square or only touch the outer square. If one of the components is a line segment, then it must connect opposite sides of the outer square, and we have $P(S) \geq 1$. By Lemma \ref{lem:boundedBy1} we have $P(S) \leq 1$. Hence, in this case $\partial S$ must consist of exactly that single line segment.

    Next we will assume that $\partial S$ consists of several arcs of circles. Then by Lemma \ref{lem:quarterCircleBound}, we have
    \[
        P(S) = \sum P(S_i) \geq \sum \sqrt{\pi |S_i|} \geq \sqrt{\pi \sum |S_i|} = \sqrt{\pi |S|}.
    \]
    If $|S| \leq \frac{\pi}{8}(1-a)^2$, then we have $P(S) \leq \sqrt{\pi|S|}$. Hence, $\partial S$ would consist of exactly one quarter-circle. Thus, we will assume that $|S| > \frac{\pi}{8}(1-a)^2$. Substituting this into the inequality above, we have
    \[
        P(S) \geq \sqrt{\pi |S|} > \frac{\pi}{2\sqrt{2}}(1-a) > 1-a.
    \]
    By Lemma \ref{lem:boundedBy1MinusA}, if $a \geq \frac{1}{3}$ we have a contradiction.

    It remains to show that the lemma statement holds in the case where $|S| > \frac{\pi}{8}(1-a)^2$, $a < \frac{1}{3}$, and $\partial S$ consists of arcs of the same radius which touch either only the inner square or only the outer square. If any component of $\partial S$ bounds an area larger than $\frac{\pi}{8}(1-a)^2$, then $P(S) > 1$ by Corollary \ref{cor:bigArcArea}. Thus, we will assume that every component of $\partial S$ bounds an area of at most $\frac{\pi}{8}(1-a)^2$. If $a \leq \frac{\pi -2}{\pi+2}$, then by Lemma \ref{lem:replaceArcRegions} we have a two, same sized quarter-circle region $S'$ such that $|S|=|S'|$ and $P(S) \geq P(S')$. Thus,
    \[
        P(S) \geq P(S') = \sqrt{2 \pi |S|} > \frac{\pi}{2}(1-a) > \frac{\pi}{3} > 1.
    \]
    On the other hand, if $a > \frac{\pi-2}{\pi+2}$, then by Lemma \ref{lem:replaceArcRegions} we have a three, same-sized quarter-circle region $S''$ such that $|S| = |S''|$ and $P(S) \geq P(S'')$. Thus,
    \[
        P(S) \geq P(S'') = \sqrt{3 \pi |S|} > \frac{\pi\sqrt{3}}{2\sqrt{2}}(1-a) > \frac{\pi}{\sqrt{6}} > 1.
    \]
    This concludes the proof.
\end{proof}
    
\section{Minimizing Regions}\label{sec:MinRegions}
Utilizing Lemmas \ref{lemma:SPE}, \ref{lemma:DARE}, and \ref{lem:numberOfBoundaryComponents}, we have eliminated all but six regions from being minimizers. We now use this section to establish some properties of these remaining regions.
\begin{figure}[ht]
    \centering
    \begin{tikzpicture}[scale=3]
    \begin{scope}
        \draw[ultra thick] (0,0) rectangle (1,1);
        \filldraw[ultra thick,fill=gray] (0.4,0.4) rectangle (0.6,0.6);
        \draw[thick] (0.4,0) arc (0:90:0.4);
        \draw (0.5,-0.05) node[anchor=north] {Quarter Circle (QC)};
    \end{scope}
    \begin{scope}[shift={(1.3,0)}]
        \draw[ultra thick] (0,0) rectangle (1,1);
        \filldraw[ultra thick,fill=gray] (0.4,0.4) rectangle (0.6,0.6);
        \draw[thick] (0.5,0) -- (0.5,0.4);
        \draw[thick] (0,0.5) -- (0.4,0.5);
        \draw (0.5,-0.05) node[anchor=north] {Adjacent Lines (AL)};
    \end{scope}
    \begin{scope}[shift={(2.6,0)}]
        \draw[ultra thick] (0,0) rectangle (1,1);
        \filldraw[ultra thick,fill=gray] (0.4,0.4) rectangle (0.6,0.6);
        \draw[thick] (0,0.8) arc (90:35:0.5);
        \draw[thick] (0.8,0) arc (0:55:0.5);
        \draw (0.5,-0.05) node[anchor=north] {Opposite Arcs (OA)};
    \end{scope}
    \begin{scope}[shift={(0,-1.5)}]
        \draw[ultra thick] (0,0) rectangle (1,1);
        \filldraw[ultra thick,fill=gray] (0.4,0.4) rectangle (0.6,0.6);
        \draw[thick] (0.25,0) -- (0.25,1);
        \draw (0.5,-0.05) node[anchor=north] {Full Line (FL)};
    \end{scope}
    \begin{scope}[shift={(1.3,-1.5)}]
        \draw[ultra thick] (0,0) rectangle (1,1);
        \filldraw[ultra thick,fill=gray] (0.4,0.4) rectangle (0.6,0.6);
        \draw[thick] (0.2,1) arc (180:235:0.5);
        \draw[thick] (0.2,0) arc (180:125:0.5);
        \draw (0.5,-0.05) node[anchor=north] {Adjacent Arcs (AA)};
    \end{scope}
    \begin{scope}[shift={(2.6,-1.5)}]
        \draw[ultra thick] (0,0) rectangle (1,1);
        \filldraw[ultra thick,fill=gray] (0.4,0.4) rectangle (0.6,0.6);
        \draw[thick] (0.45,0) -- (0.45,0.4);
        \draw[thick] (0.45,0.6) -- (0.45,1);
        \draw (0.5,-0.05) node[anchor=north] {Opposite Lines (OL)};
    \end{scope}
    \end{tikzpicture}
    \caption{The six types of regions which minimize perimeter.}
    \label{fig:sixMinRegions}
\end{figure}

\begin{center}
    \small
    \begin{tabular}{c|l}
        Region & Area Domain\\[5pt]
        \hline\\[-5pt]
        Quarter Circle & $\ds\mathcal{D}_{QC}
        = \left[0,\ \frac{\pi}{8}(1-a)^2\right]$\\[10pt]
        Adjacent Lines & $\ds\mathcal{D}_{AL}
        = \left[\left(\frac{1-a}{2}\right)^2,
        \ \left(\frac{1-a}{2}\right)^2+a(1-a)\right]$\\[10pt]
        Opposite Arcs & $\ds\mathcal{D}_{OA}
        = \left(\left(\frac{1-a}{2}\right)^2+a(1-a),
        \ \left(1+\frac{\pi}{2}\right)\left(\frac{1-a}{2}\right)^2+a(1-a)\right)$\\[10pt]
        Full Line & $\ds\mathcal{D}_{FL}
        = \left[0,\ \frac{1-a}{2}\right)$\\[10pt]
        Adjacent Arcs & $\ds\mathcal{D}_{AA}
        = \left(\frac{1-a}{2}-\frac{\pi}{8}(1-a)^2,
        \ \frac{1-a}{2}\right)$\\[10pt]
        Opposite Lines & $\ds\mathcal{D}_{OL}
        = \left[\frac{1-a}{2},\ \frac{1-a^2}{2}\right]$
        \label{area_domains}
    \end{tabular}
\end{center}

\subsection{Double Arc Regions}
We begin our discussion with the properties of regions bound by two arcs of circles. We refer to these regions as \textit{opposite arc (OA)} and \textit{adjacent arc (AA)} regions. Unfortunately, we cannot express the perimeter of either of these regions explicitly as a function of the area they contain. Hence, we must utilize another variable and implicitly define our perimeter function.
\begin{figure}[ht]
    \centering
    \begin{tikzpicture}[scale=4]
        \begin{scope}[shift={(0,0)}]
            \draw[ultra thick] (0,0) rectangle (1,1);
            \filldraw[ultra thick,fill=gray] (0.4,0.4) rectangle (0.6,0.6);
            \draw[thick] (0.2,1) arc (180:234:0.5);
            \draw[thick] (1,0.2) arc (-90:-144:0.5);
            \draw[thick,dashed] (0.4,0.6) -- (0.7,1);
            \draw[thick,dashed] (0.6,0.4) -- (1,0.7);
            \draw (0.55,1) arc (180:235:0.15);
            \draw (1,0.55) arc (-90:-145:0.15);
            \draw (0.13,1) -- (0.13,0.925) -- (0.205,0.925);
            \draw (1,0.13) -- (0.925,0.13) -- (0.925,0.205);
            \draw (0.55,0.95) node[anchor=north east] {$\theta$};
            \draw (0.95,0.55) node[anchor=north east] {$\theta$};
            \draw (0.5,-0.05) node[anchor=north] {Opposite Arcs (OA)};
        \end{scope}
        \begin{scope}[shift={(1.5,0)}]
            \draw[ultra thick] (0,0) rectangle (1,1);
            \filldraw[ultra thick,fill=gray] (0.4,0.4) rectangle (0.6,0.6);
            \draw[thick] (0.2,1) arc (180:234:0.5);
            \draw[thick] (0.2,0) arc (180:126:0.5);
            \draw[thick,dashed] (0.4,0.6) -- (0.7,1);
            \draw[thick,dashed] (0.4,0.4) -- (0.7,0);
            \draw (0.55,1) arc (180:235:0.15);
            \draw (0.55,0) arc (180:125:0.15);
            \draw (0.13,1) -- (0.13,0.925) -- (0.205,0.925);
            \draw (0.13,0) -- (0.13,0.075) -- (0.205,0.075);
            \draw (0.55,0.95) node[anchor=north east] {$\theta$};
            \draw (0.55,0.05) node[anchor=south east] {$\theta$};
            \draw (0.5,-0.05) node[anchor=north] {Adjacent Arcs (AA)};
        \end{scope}
    \end{tikzpicture}
    \caption{The two types of regions whose boundary consists of two arcs.}
    \label{fig:doubleArcRegions}
\end{figure}

For these regions we introduce the angle $\theta$, and we use the notation $A_{OA}(\theta,a)$ and $A_{AA}(\theta,a)$ to be the area of an opposite arc region and an adjacent arc region, respectively. Similarly, we use $P_{OA}(\theta,a)$ and $P_{AA}(\theta,a)$ to be the perimeter of an opposite arc region and adjacent arc region, respectively. See Figure \ref{fig:doubleArcRegions} for an explanation of which angle $\theta$ is referring to. Now, in terms of $\theta$, we have the following:
\begin{align*}
    A_{OA}(\theta,a) &=  a(1-a) + \left(\frac{1-a}{2}\right)^2[\theta\csc^2\theta - \cot\theta + 1],\\[0.5em]
    A_{AA}(\theta,a) &= \frac{1-a}{2}
    - \left(\frac{1-a}{2}\right)^2[\theta\csc^2\theta - \cot\theta],\\[1em]
    P_{OA}(\theta,a) &= (1 - a)\theta\csc\theta,\\[1em]
    P_{AA}(\theta,a) &= (1 - a)\theta\csc\theta.
\end{align*}

Note that from the geometry, we have that $\theta \in (0,\frac{\pi}{2})$. Now if $A_{OA}$ and $A_{AA}$ are injective, then $P_{OA}$ and $P_{AA}$ are functions of the area they contain. To show this we have the following lemma.

\begin{lemma}\label{lem:Area_Functions_Injective}
    For all fixed $a \in (0,1)$, the functions $A_{OA}(\theta,a)$ and $A_{AA}(\theta,a)$ are injective.
\end{lemma}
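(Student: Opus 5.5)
The plan is to show that, for each fixed $a\in(0,1)$, both functions are strictly monotone in $\theta$ on $(0,\frac{\pi}{2})$; injectivity then follows at once. Both area formulas have the form (constant in $\theta$) $\pm\left(\frac{1-a}{2}\right)^2 g(\theta)$, with
\[
g(\theta)=\theta\csc^2\theta-\cot\theta .
\]
For the opposite arc region, the extra summand $+1$ inside the bracket and the term $a(1-a)$ do not depend on $\theta$. So it is enough to prove that $g$ is strictly monotone on $(0,\frac{\pi}{2})$.

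This is exactly the computation already done in the proof of Lemma~\ref{lem:no_horns}, where $A_1(\theta,a)$ is the same expression $g$ up to an additive constant. I will recompute the derivative directly. Using $\frac{d}{d\theta}\csc^2\theta=-2\csc^2\theta\cot\theta$ and $\frac{d}{d\theta}\cot\theta=-\csc^2\theta$, we get
\[
g'(\theta)=\csc^2\theta-2\theta\csc^2\theta\cot\theta+\csc^2\theta=2\csc^2\theta\,(1-\theta\cot\theta).
\]
Hence
\[
\frac{\partial A_{OA}}{\partial\theta}=\frac{(1-a)^2}{2}\csc^2\theta\,(1-\theta\cot\theta),\qquad \frac{\partial A_{AA}}{\partial\theta}=\frac{(1-a)^2}{2}\csc^2\theta\,(\theta\cot\theta-1).
\]
The second expression matches the formula for $\frac{dA_1}{d\theta}$ in Lemma~\ref{lem:no_horns}, as expected.

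For the sign, recall that $\tan\theta>\theta>0$ on $(0,\frac{\pi}{2})$. This gives $\theta\cot\theta<1$, and in addition $\csc^2\theta>0$ and $(1-a)^2>0$. Therefore $A_{OA}$ is strictly increasing in $\theta$ and $A_{AA}$ is strictly decreasing in $\theta$ on the open interval. A strictly monotone function on an interval is injective, which finishes the proof.

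There is no real obstacle here; the only step needing care is the derivative computation. As a sanity check, I will compare the limits with the area domains in the table. As $\theta\to0^+$ we have $g\to0$, because $\theta\csc^2\theta-\cot\theta\approx \frac{1}{\theta}+\frac{\theta}{3}-\frac{1}{\theta}+\frac{\theta}{3}\to 0$. At $\theta=\frac{\pi}{2}$ we have $g=\frac{\pi}{2}$. These values reproduce the endpoints of $\mathcal{D}_{OA}$ and $\mathcal{D}_{AA}$, which confirms both the formulas and the directions of monotonicity.
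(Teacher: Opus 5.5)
Your proof is correct and is essentially the paper's argument: you differentiate in $\theta$, obtain $\pm\frac{(1-a)^2}{2}\csc^2\theta\,(1-\theta\cot\theta)$, and use $\tan\theta>\theta$ on $(0,\frac{\pi}{2})$ to get strict monotonicity. One harmless slip in your aside: since $A_1=\left(\frac{1-a}{2}\right)^2 g(\theta)+\text{const}$, its derivative should equal $\partial A_{OA}/\partial\theta$ (which is positive), not $\partial A_{AA}/\partial\theta$, so your agreement with the formula printed in Lemma~\ref{lem:no_horns} comes from a sign error there and is not a confirmation, though this plays no role in your proof.
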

\begin{proof}
    Let $a \in (0,1)$ be fixed. Then $A_{OA}(\theta,a)$ and $A_{AA}(\theta,a)$ are only functions of $\theta$. Note that it suffices to show that these functions are monotonic. Taking their derivatives we have
    \[
        \frac{dA_{OA}}{d\theta} = 2\left(\frac{1-a}{2}\right)^2\csc^2\theta\,(1-\theta\cot\theta)
    \]
    and
    \[
        \frac{dA_{AA}}{d\theta} = -2\left(\frac{1-a}{2}\right)^2\csc^2\theta\,(1-\theta\cot\theta).
    \]
    For all $\theta \in (0,\frac{\pi}{2})$, we have $\tan\theta > \theta$. Hence, $1-\theta\cot\theta > 0$. Thus, $A_{OA}$ and $A_{AA}$ are monotonic.
\end{proof}

Thus, for a fixed $a \in (0,1)$, $A_{OA}$ and $A_{AA}$ have inverses. Therefore, the perimeters are in fact functions of the area they enclose. To refer to this we will sometimes write $P_{OA}(A)$ as shorthand for $P_{OA}(A_{OA}^{-1}(A))$, and similarly with $P_{AA}(A)$. Notice that the two area functions have the same derivative with opposite signs. In the next lemma we show that in fact the perimeter functions are reflections of each other.

\begin{lemma}\label{lem:IFL-Symmetry1}
    For all fixed $a \in (0,1)$, the functions $P_{OA}(A)$ and $P_{AA}(A)$ are symmetric about the vertical line, $A=\frac{3}{8}(1-a^2)$.
\end{lemma}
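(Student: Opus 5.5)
The plan is to use the common parametrization by $\theta$ and show that a given $\theta$ produces areas in the two families that are mirror images about $A=\frac{3}{8}(1-a^2)$, while producing the same perimeter. Since $P_{OA}(\theta,a)=P_{AA}(\theta,a)=(1-a)\theta\csc\theta$, the claim then follows at once.

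The first step is to add the two area formulas. Set $g(\theta)=\theta\csc^2\theta-\cot\theta$. Then
\[
A_{OA}(\theta,a)+A_{AA}(\theta,a) = a(1-a)+\left(\frac{1-a}{2}\right)^2\bigl[g(\theta)+1\bigr] + \frac{1-a}{2}-\left(\frac{1-a}{2}\right)^2 g(\theta),
\]
and the $g(\theta)$ terms cancel. What is left is $a(1-a)+\frac{(1-a)^2}{4}+\frac{1-a}{2}$. Factoring out $(1-a)$ gives $(1-a)\bigl(a+\frac{1-a}{4}+\frac12\bigr)=(1-a)\cdot\frac{3(1+a)}{4}=\frac34(1-a^2)$. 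Hence, for every $\theta\in(0,\frac{\pi}{2})$,
\[
A_{AA}(\theta,a)=\tfrac34(1-a^2)-A_{OA}(\theta,a),
\]
so the two area values for a given $\theta$ are reflections of each other through $\frac38(1-a^2)$.

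The second step converts this into a statement about the perimeter functions of $A$. By Lemma \ref{lem:Area_Functions_Injective}, both area maps are injective, so $P_{OA}(A)$ and $P_{AA}(A)$ are well defined. Take $A$ in the domain of $P_{OA}$ and let $\theta=A_{OA}^{-1}(A)$. The identity above gives $A_{AA}(\theta,a)=\frac34(1-a^2)-A$, which means $A_{AA}^{-1}\bigl(\frac34(1-a^2)-A\bigr)=\theta$. Therefore
\[
P_{AA}\bigl(\tfrac34(1-a^2)-A\bigr)=(1-a)\theta\csc\theta=P_{OA}(A).
\]
This is exactly the reflection symmetry about the vertical line $A=\frac38(1-a^2)$.

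The third step checks that the reflection maps one domain onto the other. I would verify that the endpoints of $\mathcal{D}_{OA}$ and $\mathcal{D}_{AA}$ in the table correspond under $A\mapsto\frac34(1-a^2)-A$. This also follows from the identity by letting $\theta\to0^+$ and $\theta\to\frac{\pi}{2}^-$, using $g\to0$ and $g\to\frac{\pi}{2}$ respectively. The only potential obstacle is this bookkeeping, and it is routine.
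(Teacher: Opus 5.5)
Your proposal is correct and takes the same approach as the paper. Both show that $A_{OA}(\theta,a)+A_{AA}(\theta,a)=\frac34(1-a^2)$ for every $\theta$, then combine this with $P_{OA}(\theta,a)=P_{AA}(\theta,a)$. Your explicit passage through $A_{OA}^{-1}$ and $A_{AA}^{-1}$, and your endpoint check that the reflection carries $\mathcal{D}_{OA}$ onto $\mathcal{D}_{AA}$, just make rigorous what the paper leaves implicit.
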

\begin{proof}
    To prove the result, we need
    \[
        P_{OA}(A) = P_{AA}\left(2\cdot \frac{3}{8}(1-a^2) - A\right).
    \] 
    Since $P_{OA} = P_{AA}$, it suffices to show
    \[A_{OA}=2\cdot \frac{3}{8}(1-a^2)-A_{AA}.\]
    So, 
    \begin{align*}
        2\cdot\frac{3}{8}(1-a^2)-A_{AA} &= 2\cdot \frac{3}{8}(1-a^2)
        -\frac{1-a}{2}
        +\left(\frac{1-a}{2}\right)^2(\csc^2\theta-\cot\theta)\\
        &= \frac{3}{4}-\frac{3a^2}{4}-\frac{1}{2}+\frac{a}{2}
        +\frac{1}{4} \left[ (1-a)^2(\theta\csc^2\theta
        -\cot\theta)\right]\\
        &= \frac{1}{4}(1-a)^2+a(1-a)
        +\frac{1}{4}(1-a)^2(\theta\csc^2\theta-\cot\theta)\\
        &= \frac{1}{4}(1-a)^2(\theta\csc^2-\cot\theta+1)+a(1-a)\\
        &= A_{OA}\text{.}
    \end{align*}
    This concludes the proof.
\end{proof}

We now continue determining properties of $P_{OA}(A)$ and $P_{AA}(A)$. The next lemma establishes that $P_{OA}(A)$ as a strictly increasing function, and thus $P_{AA}(A)$ is a strictly decreasing function by Lemma \ref{lem:IFL-Symmetry1}. Afterwards, we determine the limiting behavior of these perimeter functions.

\begin{lemma}\label{lem:IFL-OHincreasing}
    For all fixed $a \in (0,1)$, the function $P_{OA}(A)$ is strictly increasing, and the function $P_{AA}(A)$ is strictly decreasing.
\end{lemma}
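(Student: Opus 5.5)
We treat $P_{OA}(A)$ as the composition $P_{OA}(\theta,a)\circ A_{OA}^{-1}(A)$, which is well defined by Lemma \ref{lem:Area_Functions_Injective}, and apply the chain rule. Fix $a\in(0,1)$. Since both $A_{OA}$ and $P_{OA}$ are differentiable in $\theta$ on $(0,\frac{\pi}{2})$, and $\frac{dA_{OA}}{d\theta}>0$ there (computed in the proof of Lemma \ref{lem:Area_Functions_Injective}), the inverse function theorem makes $A_{OA}^{-1}$ differentiable. We then have
\[
\frac{dP_{OA}}{dA} = \frac{dP_{OA}/d\theta}{dA_{OA}/d\theta}.
\]
The whole proof thus reduces to determining the sign of $\frac{dP_{OA}}{d\theta}$ on $(0,\frac{\pi}{2})$.

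Next we compute this derivative directly:
\[
\frac{dP_{OA}}{d\theta} = (1-a)\bigl(\csc\theta - \theta\csc\theta\cot\theta\bigr) = (1-a)\csc\theta\,(1-\theta\cot\theta).
\]
As in Lemma \ref{lem:Area_Functions_Injective}, the inequality $\tan\theta>\theta$ on $(0,\frac{\pi}{2})$ gives $1-\theta\cot\theta>0$. Also $\csc\theta>0$ and $1-a>0$, so the numerator is positive. The denominator is $2\left(\frac{1-a}{2}\right)^2\csc^2\theta\,(1-\theta\cot\theta)>0$. The common factor cancels, leaving
\[
\frac{dP_{OA}}{dA} = \frac{2\sin\theta}{1-a} > 0.
\]
Hence $P_{OA}$ is strictly increasing in $A$ on $\mathcal{D}_{OA}$.

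For $P_{AA}$ we can either repeat this argument or appeal to symmetry. Directly, $P_{AA}(\theta,a)=P_{OA}(\theta,a)$ and $\frac{dA_{AA}}{d\theta}=-\frac{dA_{OA}}{d\theta}$, so $\frac{dP_{AA}}{dA} = -\frac{2\sin\theta}{1-a}<0$. Alternatively, Lemma \ref{lem:IFL-Symmetry1} gives $P_{AA}(A)=P_{OA}(\tfrac34(1-a^2)-A)$, and composing a strictly increasing function with the decreasing map $A\mapsto \tfrac34(1-a^2)-A$ yields a strictly decreasing function. I would present both routes briefly.

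There is no real obstacle in this argument. The only care needed is to justify differentiability of the inverse, which follows from the nonvanishing derivative, and to note that the factor $1-\theta\cot\theta$ cancels legitimately because it is nonzero on the open interval.
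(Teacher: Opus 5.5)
Your proof is correct and follows the paper's argument: the paper also computes $\frac{dP_{OA}}{dA}=\frac{dP_{OA}/d\theta}{dA_{OA}/d\theta}=\frac{2\sin\theta}{1-a}>0$ and then obtains that $P_{AA}$ is strictly decreasing from the reflection symmetry of Lemma \ref{lem:IFL-Symmetry1}. Your direct route for $P_{AA}$ via $\frac{dA_{AA}}{d\theta}=-\frac{dA_{OA}}{d\theta}$, and your remark on why the inverse $A_{OA}^{-1}$ is differentiable, are valid small additions.
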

\begin{proof}
    Note that the function $P_{OA}$ is only defined when $\theta \in (0,\frac{\pi}{2})$.
    Now by taking the derivative implicitly with respect to area, we have:
    \[
        \frac{dP_{OA}}{dA_{OA}} = \frac{\frac{dP_{OA}}{d\theta}}{\frac{dA_{OA}}{d\theta}} = \frac{(1-a)\csc\theta(1 - \theta\cot\theta)}{\frac{1}{2}(1-a)^2\csc^2\theta(1-\theta\cot\theta)} = \frac{2\sin\theta}{1-a}.
    \]
    Since $\theta \in (0,\frac{\pi}{2})$ and $a \in (0,1)$, we have that $\frac{dP_{OA}}{dA_{OA}}> 0$. By Lemma \ref{lem:IFL-Symmetry1}, this implies that $P_{AA}(A)$ has a strictly negative derivative.
\end{proof}

\begin{lemma}\label{lem:perimeter_limits}
    Let $U_a = \left(\frac{1-a}{2}\right)^2+a(1-a)$ and $V_a = \frac{1-a}{2}$. For all fixed $a \in (0,1)$,
    \[
        \lim_{A \to U_a^+}P_{OA}(A) = 1-a \hspace{2em}\textit{and}\hspace{2em} \lim_{A \to V_a^-}P_{AA}(A) = 1-a.
    \]
\end{lemma}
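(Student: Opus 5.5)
We reduce the statement to one-variable limits in $\theta$, using the parametrizations $A_{OA}(\theta,a)$, $A_{AA}(\theta,a)$ and $P_{OA}=P_{AA}=(1-a)\theta\csc\theta$. By Lemma \ref{lem:Area_Functions_Injective}, for fixed $a$ the map $\theta\mapsto A_{OA}(\theta,a)$ is a continuous, strictly monotone bijection from $(0,\frac{\pi}{2})$ onto its image. The derivative computation there shows it is increasing, and $A_{AA}$ is decreasing. Hence $A\to U_a^+$ along the graph of $P_{OA}$ exactly when $\theta\to 0^+$, provided the limit of $A_{OA}$ at $\theta=0^+$ equals $U_a$. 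Similarly, $A\to V_a^-$ along $P_{AA}$ exactly when $\theta\to0^+$, provided the limit of $A_{AA}$ at $\theta=0^+$ equals $V_a$.

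The first step is to compute
\[
\lim_{\theta\to0^+}\left(\theta\csc^2\theta-\cot\theta\right)=\lim_{\theta\to0^+}\frac{\theta-\sin\theta\cos\theta}{\sin^2\theta}.
\]
The numerator is $\theta-\frac12\sin 2\theta = \frac{2}{3}\theta^3+O(\theta^5)$ and the denominator is $\theta^2+O(\theta^4)$, so the limit is $0$; L'H\^opital's rule works equally well. Consequently $A_{OA}(\theta,a)\to a(1-a)+\left(\frac{1-a}{2}\right)^2=U_a$ and $A_{AA}(\theta,a)\to\frac{1-a}{2}=V_a$ as $\theta\to0^+$. Since $A_{OA}$ is increasing in $\theta$, every value $A$ in its image exceeds $U_a$, and $A\downarrow U_a$ forces $\theta=A_{OA}^{-1}(A)\to0^+$, because the inverse of a continuous strictly monotone function is continuous and monotone. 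The same argument applies to $A_{AA}$ as $A\uparrow V_a$.

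The second step is the perimeter limit: $\theta\csc\theta=\theta/\sin\theta\to1$, so $P_{OA}(\theta,a)=P_{AA}(\theta,a)\to 1-a$. Composing with $\theta=A^{-1}(A)\to0^+$ gives both stated limits. Alternatively, once the first limit is established, Lemma \ref{lem:IFL-Symmetry1} yields the second by reflection, since $2\cdot\frac38(1-a^2)-U_a=V_a$.

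The only delicate point is justifying that the one-sided limit in $A$ corresponds to $\theta\to0^+$, rather than to the other endpoint $\theta\to\frac{\pi}{2}^-$. This follows from the monotonicity direction established in Lemma \ref{lem:Area_Functions_Injective} together with the computed endpoint values. At $\theta=\frac{\pi}{2}$ one gets $\left(1+\frac{\pi}{2}\right)\left(\frac{1-a}{2}\right)^2+a(1-a)$ for $A_{OA}$, which is larger than $U_a$, consistent with the area domains in the table.
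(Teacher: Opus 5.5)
Your proposal is correct and follows essentially the same route as the paper: use the monotonicity from Lemma \ref{lem:Area_Functions_Injective} to identify $A\to U_a^+$ (resp.\ $A\to V_a^-$) with $\theta\to0^+$, then evaluate $\lim_{\theta\to0^+}(1-a)\theta\csc\theta=1-a$. You also verify explicitly that $\lim_{\theta\to0^+}(\theta\csc^2\theta-\cot\theta)=0$ and that the inverse area map is continuous, which the paper only asserts.
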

\begin{proof}
  From Lemma \ref{lem:Area_Functions_Injective}, $A_{OA}(\theta)$ is injective. Further, as $\theta \to 0^+$, we have that $A_{OA}(\theta) \to U_a^+$. Hence,
  \[\lim_{A\to U_a^+}P_{OA}(A) = \lim_{\theta \to 0^+}P_{OA}(\theta) = \lim_{\theta \to 0^+}(1-a)\frac{\theta}{\sin\theta}  = 1-a.\]
  Similarly, as $\theta \to 0^+$, $A_{AA}(\theta) \to V_a^-$. Thus,
  \[
    \lim_{A \to V_a^-}P_{AA}(A) = \lim_{\theta \to 0^+}P_{AA}(\theta) = \lim_{\theta \to 0^+}P_{OA}(\theta) = 1-a.
  \]
\end{proof}

\subsection{Minimizing Region Interactions }
Now that we have established the behavior of the opposite arc and adjacent arc regions, we will make use of this section to analyze the interactions between our six minimizing regions.
Importantly, many of our remaining minimizing regions and their behaviors with one another are dependent on the value of $a$.
We note here that we define the following values $a_3=\frac{4-\pi}{4+3\pi}$ and $a_4=\frac{1}{3}$. We now must work to establish our remaining values of $a$.

\begin{lemma}\label{lemma:IFL-Bisection}
    For all fixed $a \in (0,1)$, the functions $P_{OA}(\theta,a)$ and $P_{AA}(\theta,a)$ are bijective.
\end{lemma}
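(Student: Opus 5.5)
Since $P_{OA}(\theta,a)=P_{AA}(\theta,a)=(1-a)\theta\csc\theta$, a single argument handles both functions. I read the statement as follows: for fixed $a\in(0,1)$, the map $\theta\mapsto(1-a)\theta\csc\theta$ is a bijection from $(0,\frac{\pi}{2})$ onto the interval $\left(1-a,\ \frac{\pi}{2}(1-a)\right)$. I would prove this by showing the map is continuous and strictly monotone, and then identifying its image from the endpoint limits.

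\emph{Step 1 (injectivity).} Differentiate in $\theta$:
\[
\frac{dP_{OA}}{d\theta}=(1-a)\csc\theta\,(1-\theta\cot\theta).
\]
This is the same factor already used in Lemma~\ref{lem:Area_Functions_Injective} and Lemma~\ref{lem:IFL-OHincreasing}. Since $\tan\theta>\theta$ on $(0,\frac{\pi}{2})$, we have $1-\theta\cot\theta>0$, and $\csc\theta>0$ there. So the derivative is strictly positive, $P_{OA}$ is strictly increasing in $\theta$, and it is therefore injective.

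\emph{Step 2 (image).} As $\theta\to0^+$, $\theta\csc\theta\to1$, so $P_{OA}\to1-a$; this is the same limit computed in Lemma~\ref{lem:perimeter_limits}. As $\theta\to\frac{\pi}{2}^-$, $\theta\csc\theta\to\frac{\pi}{2}$, so $P_{OA}\to\frac{\pi}{2}(1-a)$. The function is continuous and strictly increasing on the open interval, so the intermediate value theorem gives that its image is exactly $\left(1-a,\frac{\pi}{2}(1-a)\right)$. Hence it is a bijection onto that interval, and the identical formula gives the same conclusion for $P_{AA}$.

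\emph{Step 3 (bijection as a function of area).} If the intended meaning is instead that $P_{OA}(A)$ and $P_{AA}(A)$ are bijections from their area domains $\mathcal{D}_{OA}$, $\mathcal{D}_{AA}$ onto this perimeter interval, I would compose with the inverse area maps. Lemma~\ref{lem:Area_Functions_Injective} shows $A_{OA}$ and $A_{AA}$ are continuous and strictly monotone on $(0,\frac{\pi}{2})$. A check of the endpoint values shows their images are exactly the open intervals $\mathcal{D}_{OA}$ and $\mathcal{D}_{AA}$ in the table; for example, at $\theta\to\frac{\pi}{2}$ we get $\theta\csc^2\theta-\cot\theta\to\frac{\pi}{2}$. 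Composing two bijections then gives the result.

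The only step needing care is interpreting the domain and codomain correctly and verifying these endpoint limits. The monotonicity itself follows directly from the elementary inequality $\tan\theta>\theta$.
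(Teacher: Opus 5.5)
Your proof is correct and matches the paper's argument. Strict monotonicity in $\theta$ follows from $\tan\theta>\theta$, which gives injectivity, and the endpoint limits $1-a$ and $\frac{\pi}{2}(1-a)$ together with continuity give surjectivity onto $\left(1-a,\frac{\pi}{2}(1-a)\right)$; your optional Step 3 goes beyond what the paper proves here, and its endpoint checks against $\mathcal{D}_{OA}$ and $\mathcal{D}_{AA}$ are also correct.
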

\begin{proof}
    For a fixed $a$ we have,
    \[P_{OA}(\theta)=P_{AA}(\theta)=(1-a)\theta\csc\theta\]
    then
    \[\frac{dP_{OA}}{d\theta}=\frac{dP_{AA}}{d\theta}
        =(1-a)(\csc\theta-\theta\csc\theta\cot\theta)\text{.}\]
    Since $\tan\theta-\theta>0$ for all $\theta\in(0,\frac{\pi}{2})$, then
    \begin{align*}
        \tan\theta-\theta>0 &\implies \tan\theta>\theta\\
        &\implies 1>\theta\cot\theta\\
        &\implies \csc\theta>\theta\csc\theta\cot\theta\\
        &\implies \csc\theta-\theta\csc\theta\cot\theta>0\\
        &\implies (1-a)(\csc\theta-\theta\csc\theta\cot\theta)>0\\
        &\implies \frac{dP_{OA}}{d\theta}=\frac{dP_{AA}}{d\theta}>0\text{.}
    \end{align*}
    So for all $\theta\in\left(0,\dfrac{\pi}{2}\right)$,
    $P_{OA}$ and $P_{AA}$ are injective.
    Now to show surjectivity, we consider the following limits,
    \[\lim_{\theta\to\frac{\pi}{2}^-}P_{OA}(\theta)
        =\lim_{\theta\to\frac{\pi}{2}^-}P_{AA}(\theta)
        =(1-a)\cdot\lim_{\theta\to\frac{\pi}{2}^-}\theta\csc\theta
        =\dfrac{\pi}{2}(1-a)\text{.}\]
    and from Lemma~\ref{lem:perimeter_limits} we have,
    \[\lim_{\theta\to 0^+}P_{OA}(\theta)
        =\lim_{\theta\to 0^+}P_{AA}(\theta)
        =1-a.\]
    Then by the continuity of $P_{OA}$ and $P_{AA}$ we have that $P_{OA}$ and $P_{AA}$ are surjective.
\end{proof}

The following lemma establishes the existence of a unique intersection point of the profiles of Quarter Circle regions and Opposite Arc regions.

\begin{lemma}\label{lem:mu_exists}
    For all fixed $a \in (0,\ \frac{4-\pi}{4+3\pi})$ there exist unique values $\theta_a \in (0,\ \frac{\pi}{4})$ and $\mu_a\in\left[0,\frac{1}{2}|Q_a|\right]$ such that
    \[
        P_{OA}(\theta_a,a)=\sqrt{\pi\mu_a} \hspace{2em}\textit{and}\hspace{2em}
        \mu_a = A_{OA}(\theta_a,a).
    \]
    Furthermore,
    \begin{itemize}
        \item for all $A \in \left((\frac{1-a}{2})^2+a(1-a),\ \mu_a\right)$, we have $P_{OA}(A) > \sqrt{\pi A}$,
        \item for all $ A \in \left(\mu_a, \frac{\pi}{8}(1-a)^2\right]$, we have $P_{OA}(A) < \sqrt{\pi A}$.
    \end{itemize}
\end{lemma}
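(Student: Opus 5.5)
The plan is to repeat the argument used for Type 1 regions in Lemma \ref{lem:no_horns}, now applied to the opposite arc parametrization. For fixed $a$, define
\[
G(\theta) := P_{OA}(\theta,a)^2 - \pi A_{OA}(\theta,a), \qquad \theta \in \left(0,\tfrac{\pi}{2}\right).
\]
Zeros of $G$ are exactly the pairs $(\theta_a,\mu_a)$ in the statement, with $\mu_a = A_{OA}(\theta_a,a)$. Since $P_{OA}>0$, the sign of $G(\theta)$ is the sign of $P_{OA}(A)-\sqrt{\pi A}$ at $A = A_{OA}(\theta,a)$. By Lemma \ref{lem:Area_Functions_Injective}, $A_{OA}$ is strictly increasing in $\theta$, so statements about the sign of $G$ on $\theta$-intervals translate directly into statements on the corresponding area intervals.

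\textbf{Monotonicity of $G$.} Using $\frac{dP_{OA}}{dA_{OA}} = \frac{2\sin\theta}{1-a}$ from the proof of Lemma \ref{lem:IFL-OHincreasing}, together with $P_{OA} = (1-a)\theta\csc\theta$, we get
\[
G'(\theta) = \frac{dA_{OA}}{d\theta}\left(2P_{OA}\cdot\frac{2\sin\theta}{1-a} - \pi\right) = \frac{dA_{OA}}{d\theta}\,(4\theta - \pi).
\]
Because $\frac{dA_{OA}}{d\theta}>0$, $G$ is strictly decreasing on $(0,\frac{\pi}{4})$ and strictly increasing on $(\frac{\pi}{4},\frac{\pi}{2})$.

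\textbf{Endpoint values.} Writing $U_a = \left(\frac{1-a}{2}\right)^2 + a(1-a)$, Lemma \ref{lem:perimeter_limits} gives
\[
G(0^+) = (1-a)^2 - \pi U_a = \tfrac{1}{4}(1-a)\left[(4-\pi)(1-a) - 4\pi a\right].
\]
This is positive exactly when $a(4+3\pi) < 4-\pi$, that is, when $a < \frac{4-\pi}{4+3\pi}$. This is where the hypothesis on $a$ enters. At $\theta=\frac{\pi}{4}$ we have $\theta\csc^2\theta - \cot\theta + 1 = \frac{\pi}{2}$, so $A_{OA} = a(1-a) + \frac{\pi}{8}(1-a)^2$ and $P_{OA}^2 = \frac{\pi^2}{8}(1-a)^2$. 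Hence
\[
G\left(\tfrac{\pi}{4}\right) = -\pi a(1-a) < 0.
\]
By continuity and strict monotonicity on $(0,\frac{\pi}{4}]$, there is a unique $\theta_a \in (0,\frac{\pi}{4})$ with $G(\theta_a)=0$. Moreover $G>0$ on $(0,\theta_a)$ and $G<0$ on $(\theta_a,\frac{\pi}{4}]$. No zero can occur on $(\frac{\pi}{4},\frac{\pi}{2})$ below area $\frac{\pi}{8}(1-a)^2$, and none is needed: since $A_{OA}(\frac{\pi}{4}) > \frac{\pi}{8}(1-a)^2$, every area $A \le \frac{\pi}{8}(1-a)^2$ in $\mathcal{D}_{OA}$ corresponds to some $\theta<\frac{\pi}{4}$.

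\textbf{Conclusion.} Transporting the signs of $G$ through the increasing map $A_{OA}$ gives both bullets. On $(U_a,\mu_a)$ we have $P_{OA}(A) > \sqrt{\pi A}$. On $(\mu_a, \frac{\pi}{8}(1-a)^2]$ we have $P_{OA}(A) < \sqrt{\pi A}$, with this interval taken as empty if $\mu_a \ge \frac{\pi}{8}(1-a)^2$. It remains to check $\mu_a \le \frac12|Q_a|$. We have
\[
\mu_a < A_{OA}\left(\tfrac{\pi}{4}\right) = (1-a)\left(a + \tfrac{\pi}{8}(1-a)\right) \le \tfrac{1}{2}(1-a)(1+a),
\]
where the last inequality reduces to $\frac{\pi}{8} \le \frac{1}{2}$. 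I expect the main obstacle to be the derivative simplification $G' = A_{OA}'(4\theta-\pi)$ together with evaluating $G(0^+)$ so that the threshold $\frac{4-\pi}{4+3\pi}$ appears exactly.
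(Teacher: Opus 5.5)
Your proposal is correct and follows essentially the same route as the paper. The paper also studies $D(\theta)=P_{OA}(\theta)^2-\pi A_{OA}(\theta)$, shows it is decreasing on $(0,\frac{\pi}{4})$ with a positive limit at $0^+$ (exactly when $a<\frac{4-\pi}{4+3\pi}$) and value $-\pi a(1-a)$ at $\frac{\pi}{4}$, and transfers the signs to areas through the increasing map $A_{OA}$. Your chain-rule form $G'(\theta)=\frac{dA_{OA}}{d\theta}(4\theta-\pi)$ is tidier than the paper's expanded factorization of $D'$, which contains a small algebra slip that does not affect the sign; your explicit checks that $\frac{\pi}{8}(1-a)^2<A_{OA}(\frac{\pi}{4})$ and $\mu_a\le\frac12|Q_a|$ fill in points the paper leaves implicit.
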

\begin{proof}
    Let $a< \frac{4-\pi}{4+3\pi} $. Note that the perimeter of a quarter-circle region is given by $\sqrt{\pi A}$. Thus, for a quarter-circle region and an opposite arc region to have the same the area and perimeter, we must have that
    \[
        P_{OA}(\theta )=\sqrt{\pi A_{OA}(\theta)}.
    \]
    This is true if and only if
    \[
        P_{OA}(\theta)^2 - \pi A_{OA}(\theta) = 0.
    \]
    Now define $D(\theta) := P_{OA}(\theta)^2 - \pi A_{OA}(\theta)$. We will show that $D$ has a unique root in the interval $(0,\frac{\pi}{4})$. To see it has a root within the interval, note that
    \begin{align*}
        \lim_{\theta \to 0^+}D(\theta) &= (1-a)^2 - \pi\left[a(1-a) + \left(\frac{1-a}{2}\right)^2\right] \\
        &= \frac{1}{4}(1-a)\left[4(1-a)-4\pi a - \pi(1-a)\right] \\
        &= \frac{1}{4}(1-a)\left[(4-\pi) - (4+3\pi)a\right] \\
        &> 0,
    \end{align*}
    and
    \begin{align*}
        \lim_{\theta \to \pi/4^-}D(\theta) &= \frac{\pi^2}{8}(1-a)^2 - \pi\left[a(1-a) + \frac{\pi}{8}(1-a)^2\right]\\
        &= -\pi a (1-a)\\
        &< 0.
    \end{align*}
    Now we will show that $D(\theta)$ is monotonic over $(0,\frac{\pi}{4})$. For this, consider
    \begin{align*}
        D'(\theta) &= 2P_{OA}(\theta)P'_{OA}(\theta) - \pi A'_{OA}(\theta)\\
        &= 2(1-a)^2\theta\csc\theta(\csc\theta - \theta\csc\theta\cot\theta) - \frac{\pi}{2}(1-a)^2\csc^2\theta(1-\theta\cot\theta)\\
        &= (1-a)^2\csc\theta\left(2\theta - \frac{\pi}{2}\right)(\csc\theta - \theta\cot\theta).
    \end{align*}
    The first two factors are positive, and the third factor is negative. To see that the final factor is positive, note that
    \[\csc\theta - \theta\cot\theta > 0 \iff \theta\cos\theta < 1.\]
    This is true since $\theta\cos\theta < \frac{\pi}{4}\cos\theta < 1.$ Thus, $D(\theta)$ is monotonically decreasing, and hence it has a unique root $\theta_a \in (0,\frac{\pi}{4})$. This uniquely defines $\mu_a = A_{OA}(\theta_a,a).$

    Let $A \in \left( \left(\frac{1-a}{2}\right)^2+a(1-a), \mu_a\right)$. Since $A_{OA}(\theta)$ is an increasing function, an opposite arc region with area $A$ will have $\theta \in (0,\theta_a)$. Since $D(\theta)$ is monotonically decreasing, we have that within the interval $\theta \in (0,\theta_a)$,
    \begin{align*}
        D(\theta) &> 0 \\
        P_{OA}(\theta)^2 - \pi A_{OA}(\theta) &> 0 \\
        P_{OA}(\theta) &> \sqrt{\pi A_{OA}(\theta)} \\
        P_{OA}(A) &> \sqrt{\pi A}.
    \end{align*}
    Let $A \in \left(\mu_a,\frac{\pi}{8}(1-a)^2\right]$. An opposite arc region with area $A$ corresponds to $\theta \in (\theta_a,\frac{\pi}{4})$. Then we have
    \begin{align*}
        D(\theta) &< 0 \\
        P_{OA}(\theta)^2 - \pi A_{OA}(\theta) &< 0 \\
        P_{OA}(\theta) &< \sqrt{\pi A_{OA}(\theta)} \\
        P_{OA}(A) &< \sqrt{\pi A}.
    \end{align*}
    This concludes the proof.
\end{proof}

Now that we have the uniqueness and existence of $\mu_a$, we must establish a value of $a$ such that this intersection occurs at a perimeter of 1. This shows that at a particular value of $a$, we have a triple intersection between Quarter Circle regions, Opposite Arc regions, and Full Line regions.

\begin{lemma}\label{lem:a1_exists} 
    There exists a unique $a_1 \in (0,\frac{4-\pi}{4+3\pi})$ such that \[\sqrt{\pi \mu_{a_1}} = 1.\]
    Furthermore,
    \begin{itemize}
        \item for all $a \in (0,a_1)$, we have $\sqrt{\pi \mu_a}>1$,
        \item for all $a \in (a_1,\frac{4-\pi}{4+3\pi})$, we have $\sqrt{\pi \mu_a} < 1$.
    \end{itemize}
\end{lemma}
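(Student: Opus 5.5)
The plan is to treat $\mu_a$ as a function of $a$ on $I=(0,\frac{4-\pi}{4+3\pi})$ and show that $g(a):=\sqrt{\pi\mu_a}-1$ has exactly one zero there. The cleanest route is through the angle: by Lemma~\ref{lem:mu_exists}, $\sqrt{\pi\mu_a}=P_{OA}(\theta_a,a)=(1-a)\theta_a\csc\theta_a$. Hence $\sqrt{\pi\mu_a}=1$ if and only if $\theta_a\csc\theta_a=\frac{1}{1-a}$.

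The first step is continuity of $a\mapsto\theta_a$. Write $D(\theta,a)=P_{OA}(\theta,a)^2-\pi A_{OA}(\theta,a)$. This is smooth in both variables, and $\partial_\theta D<0$ on $(0,\frac{\pi}{4})$ by the computation of $D'$ in Lemma~\ref{lem:mu_exists}, which was done for arbitrary $a$. The implicit function theorem then makes $\theta_a$ a $C^1$ function of $a$, and so $g$ is continuous.

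For monotonicity, divide $D$ by $(1-a)^2$. This gives the root equation
\[
\theta^2\csc^2\theta-\frac{\pi}{4}\left[\theta\csc^2\theta-\cot\theta+1\right]=\pi\frac{a}{1-a}.
\]
The left side depends only on $\theta$ and is strictly decreasing on $(0,\frac{\pi}{4})$. The right side is strictly increasing in $a$. So $\theta_a$ is strictly decreasing in $a$, and since $\theta\csc\theta$ is increasing, $\theta_a\csc\theta_a$ is strictly decreasing in $a$. Meanwhile $\frac{1}{1-a}$ is strictly increasing. Their difference $h(a)=\theta_a\csc\theta_a-\frac{1}{1-a}$ is therefore strictly decreasing, which gives uniqueness of the zero. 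Moreover $g>0$ exactly where $h>0$ (that is, for $a$ below the zero) and $g<0$ exactly where $h<0$, which yields both bullet points.

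For existence, examine the endpoints. As $a\to 0^+$, the right side of the root equation tends to $0$. The left side is positive at $\theta=0^+$ (it equals $\frac{4-\pi}{4}$) and negative at $\frac{\pi}{4}$, so $\theta_a\to\theta_0$, the root of $\theta^2\csc^2\theta=\frac{\pi}{4}[\theta\csc^2\theta-\cot\theta+1]$ in $(0,\frac{\pi}{4})$. Since $\theta_0>0$, we get $\theta_0\csc\theta_0>1$, so $h>0$ near $0$. As $a\to\frac{4-\pi}{4+3\pi}^-$, the right side tends to $\frac{4-\pi}{4}$, which is the limit of the left side at $\theta\to0^+$. Hence $\theta_a\to0^+$, $\theta_a\csc\theta_a\to1$, and $\frac{1}{1-a}>1$, so $h<0$ near the right endpoint. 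The intermediate value theorem then gives the zero $a_1$.

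The main obstacle is showing that the left side of the root equation is strictly decreasing on $(0,\frac{\pi}{4})$. Its derivative is $\frac{D'(\theta)}{(1-a)^2}=\csc\theta(2\theta-\frac{\pi}{2})(\csc\theta-\theta\cot\theta)$, which is exactly the factorization already established in Lemma~\ref{lem:mu_exists}. That lemma shows this is negative on $(0,\frac{\pi}{4})$, so the obstacle reduces to verifying that this factorization is correct and independent of $a$.
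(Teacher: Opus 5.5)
Your proof is correct and is essentially the paper's: both get $\theta_a$ strictly decreasing from $F(\theta_a)=\frac{\pi a}{1-a}$ with $F$ decreasing on $(0,\frac{\pi}{4})$, conclude that $\sqrt{\pi\mu_a}=(1-a)\theta_a\csc\theta_a$ is strictly decreasing (the paper by differentiating $\mu_a$, you by comparing $\theta_a\csc\theta_a$ with $\frac{1}{1-a}$), and finish with the endpoint limits and the intermediate value theorem.

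There are two harmless slips. First, the left side of your root equation is not negative at $\theta=\frac{\pi}{4}$; it equals $\frac{\pi^2}{8}-\frac{\pi^2}{8}=0$ there, so in fact $\theta_a\to\frac{\pi}{4}$ as $a\to0^+$, and $\frac{\pi}{4}\csc\frac{\pi}{4}=\frac{\pi}{2\sqrt{2}}>1$ still gives $h>0$ near $0$. Second, the derivative of that left side is $\csc^2\theta\,(2\theta-\frac{\pi}{2})(1-\theta\cot\theta)$, not the expression you copied from Lemma~\ref{lem:mu_exists}, though it has the same negative sign since $\tan\theta>\theta$.
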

\begin{proof}
    To begin we will first calculate $\mu_a$ at the end points. By \ref{lem:mu_exists}, each $\mu_a$ value uniquely corresponds to a value $\theta_a \in (0,\frac{\pi}{4})$. Since $\mu_a = A_{OA}(\theta_a,a)$ and $P_{OA}(\theta_a,a) = \sqrt{\pi \mu_a}$, we have that
    \begin{equation}\label{eq:a1_exists_1}
        P_{OA}(\theta_a,a)^2 - \pi A_{OA}(\theta_a,a) = 0.
    \end{equation}
    Thus, to find $\mu_a$ as $a$ goes to 0, we need to take the limit of \eqref{eq:a1_exists_1} with $a$ approaching $0$. Doing this, we have the equation
    \[
        \theta_a^2\csc^2\theta_a - \frac{\pi}{4}(\theta_a\csc^2\theta_a - \cot\theta_a + 1) = 0.
    \]
    This equation has the unique solution $\theta_a = \frac{\pi}{4}$. Thus,
    \begin{align*}
        \lim_{a \to 0^+} \mu_a &= \lim_{a \to 0^+}A_{OA}\left(\frac{\pi}{4},a\right)\\
        &= \lim_{a \to 0^+} \frac{(1-a)\left[\pi - a(\pi-8)\right]}{8}\\
        &= \frac{\pi}{8}.
    \end{align*}

    For the other end point, we will again take the limit of \eqref{eq:a1_exists_1}, but now with $a$ approaching $\frac{4-\pi}{4+3\pi}$. Doing this, we have the equation
    \[
        \frac{4\pi^2\left[4(\theta_a^2\csc^2\theta_a-1) - \pi(\theta_a\csc^2\theta_a-\cot\theta_a)\right]}{(4+3\pi)^2} = 0.
    \]
    This equation has no solutions within $(0,\frac{\pi}{4})$. However, taking the limit of the left-hand side as $\theta_a$ approaches $0$ yields $0$. Thus,
    \begin{align*}
        \lim_{a \to \left(\frac{4-\pi}{4+3\pi}\right)^-}\mu_a &= \lim_{\theta_a \to 0^+}A_{OA}\left(\theta_a,\frac{4-\pi}{4+3\pi}\right) \\
        &= \lim_{\theta_a \to 0^+} \frac{4\pi\left[\pi(\theta_a\csc^2\theta_a - \cot\theta_a)+4\right]}{(4+3\pi)^2} \\
        &= \frac{16\pi}{(4+3\pi)^2}.
    \end{align*}

    Now we must calculate the derivative $\frac{d\mu_a}{da}$. For this, note that
    \[
        \mu_a = \frac{1}{\pi}P_{OA}(\theta_a,a)^2 = \frac{1}{\pi}(1-a)^2\theta_a^2\csc^2\theta_a.
    \]
    Taking the derivative with respect to $a$ gives us
    \begin{equation}\label{eq:a1_exists_derivative}
        \frac{d\mu_a}{da} = \frac{2}{\pi}(1-a)\theta_a\csc^2\theta_a\left[(1-\theta_a\csc\theta_a)(1-a)\frac{d\theta_a}{da} - \theta_a\right].
    \end{equation}
    To determine the sign of \eqref{eq:a1_exists_derivative}, we must determine the sign of $\frac{d\theta_a}{da}$. Going back to \eqref{eq:a1_exists_1}, we have
    \[
        (1-a)^2\theta_a^2\csc^2\theta_a - \pi a(1-a) - \frac{\pi}{4}(1-a)^2\left[\theta_a\csc^2\theta_a - \cot\theta_a + 1\right] = 0.
    \]
    Rearranging this equation gives us
    \[
        \theta_a^2\csc^2\theta_a  - \frac{\pi}{4}\left[\theta_a\csc^2\theta_a - \cot\theta_a + 1\right] = \frac{\pi a}{1-a}.
    \]
    Define the left-hand side to be $F(\theta_a)$, and we have
    \[
        F(\theta_a) = \frac{\pi a}{1-a}.
    \]
    Differentiating both sides of this equation with respect to $a$ gives us
    \[
        \frac{dF}{d\theta_a}\cdot \frac{d\theta_a}{d a} = \frac{\pi}{(1-a)^2} > 0.
    \]
    Furthermore,
    \[
        \frac{dF}{d\theta_a} = -\, \frac{(\pi-4\theta_a)\csc^2\theta_a(1-\theta_a\cot\theta_a)}{2}.
    \]
    With $0 < \theta_a < \frac{\pi}{4}$, we have $1-\theta_a\cot\theta_a > 0$. Thus, $\frac{dF}{d\theta_a} < 0$. Therefore, $\frac{d\theta_a}{da} < 0$. Now plugging this back into \eqref{eq:a1_exists_derivative}, we see that $\frac{d \mu_a}{da} < 0$.
    
    Note that $\left(\frac{\pi + 3\pi}{4+3\pi}\right)^2 < 1 < \frac{\pi^2}{8}$. Then dividing by $\pi$, we have
    \[
        \frac{16\pi}{(4+3\pi)^2} < \frac{1}{\pi} < \frac{\pi}{8}.
    \]
    This, along with the fact that $\frac{d\mu_a}{da}<0$, implies that there exists a unique value $a_1 \in (0,\frac{4-\pi}{4+3\pi})$ such that $\mu_{a_1} = \frac{1}{\pi}$. This gives us our first result. Furthermore, the fact that $\mu_a$ is monotonically decreasing gives us the rest of our result.
\end{proof}

We now move on to establishing the intersection between Opposite Arc regions and Full Line regions.

\begin{lemma}\label{lem:gamma_exists}
    For all $a\in(0,\frac{1}{3})$, there exist unique values $\gamma_a\in\mathcal{D}_{OA}$ and $\theta\in(0,\frac{\pi}{2})$ such that $A_{OA}(\theta,a) = \gamma_a$ and $P_{OA}(\theta,a)= 1$.
\end{lemma}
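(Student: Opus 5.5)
We solve the equation $P_{OA}(\theta,a)=1$ in $\theta$ first, and only afterwards pass to areas. Fix $a\in(0,\frac13)$. By Lemma~\ref{lemma:IFL-Bisection}, the map $\theta\mapsto P_{OA}(\theta,a)=(1-a)\theta\csc\theta$ is a continuous, strictly increasing bijection from $(0,\frac{\pi}{2})$ onto the open interval $\left(1-a,\ \frac{\pi}{2}(1-a)\right)$. So the first step is to check that $1$ lies in this interval.

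\begin{itemize}
\item The lower inequality $1-a<1$ holds because $a>0$.
\item The upper inequality $1<\frac{\pi}{2}(1-a)$ is equivalent to $a<1-\frac{2}{\pi}$.
\end{itemize}

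Since $1-\frac{2}{\pi}\approx 0.363>\frac13$, every $a\in(0,\frac13)$ satisfies the upper inequality. Hence there is exactly one $\theta\in(0,\frac{\pi}{2})$ with $P_{OA}(\theta,a)=1$. Uniqueness comes from the injectivity part of Lemma~\ref{lemma:IFL-Bisection}, and existence from its surjectivity part (equivalently, from the intermediate value theorem together with the limits computed there). Call this value $\theta$.

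Next set $\gamma_a:=A_{OA}(\theta,a)$. We must show $\gamma_a\in\mathcal{D}_{OA}$. The function $A_{OA}(\cdot,a)$ is continuous on $(0,\frac{\pi}{2})$ and strictly increasing, as the derivative computed in Lemma~\ref{lem:Area_Functions_Injective} is positive. Its limits at the endpoints are:
\begin{itemize}
\item as $\theta\to0^+$, the bracket $\theta\csc^2\theta-\cot\theta\to 0$, so the limit is $\left(\frac{1-a}{2}\right)^2+a(1-a)$;
\item as $\theta\to\frac{\pi}{2}^-$, the bracket tends to $\frac{\pi}{2}$, so the limit is $\left(1+\frac{\pi}{2}\right)\left(\frac{1-a}{2}\right)^2+a(1-a)$.
\end{itemize}
Thus $A_{OA}(\cdot,a)$ maps $(0,\frac{\pi}{2})$ onto exactly the open interval $\mathcal{D}_{OA}$. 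In particular $\gamma_a\in\mathcal{D}_{OA}$.

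Uniqueness of the pair follows from injectivity. If $\gamma\in\mathcal{D}_{OA}$ and $\theta'$ also satisfy $A_{OA}(\theta',a)=\gamma$ and $P_{OA}(\theta',a)=1$, then injectivity of $P_{OA}(\cdot,a)$ gives $\theta'=\theta$, and then $\gamma=\gamma_a$. Equivalently, by Lemma~\ref{lem:IFL-OHincreasing}, $P_{OA}(A)$ is strictly increasing in $A$ on $\mathcal{D}_{OA}$ and so takes the value $1$ at most once.

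The only point needing care is the upper bound $1<\frac{\pi}{2}(1-a)$. This is where the hypothesis $a<\frac13$ is used, and it holds with some room to spare since $\frac13<1-\frac{2}{\pi}$. Everything else is a direct application of the monotonicity and limit lemmas already established.
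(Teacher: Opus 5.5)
Your proposal is correct and follows the same route as the paper: check that $1-a<1<\frac{\pi}{2}(1-a)$ for $a<\frac13$, use the bijectivity of $\theta\mapsto P_{OA}(\theta,a)$ from Lemma~\ref{lemma:IFL-Bisection} to obtain a unique $\theta$, and set $\gamma_a=A_{OA}(\theta,a)$. Your extra check, computing the endpoint limits of $A_{OA}(\cdot,a)$ to show it maps $(0,\frac{\pi}{2})$ onto exactly $\mathcal{D}_{OA}$, fills in the membership $\gamma_a\in\mathcal{D}_{OA}$ that the paper only asserts.
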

\begin{proof}
    Let $a\in\left(0,\frac{1}{3}\right)$. 
    Recall that
    \[P_{OA}\in\left(1-a,\frac{\pi}{2}(1-a)\right)\text{.}\]
    Since $1-a<1<\frac{\pi}{3}<\frac{\pi}{2}(1-a)$
    then from Lemma~\ref{lemma:IFL-Bisection},
    we have that there exists a unique
    $\theta\in\left(0,\frac{\pi}{2}\right)$
    such that $P_{OA}(\theta,a)=1$.
    Thus we can uniquely define
    $\gamma_a=A_{OA}(\theta,a)\in\mathcal{D}_{OA}$.
\end{proof}

Similarly, we have a unique intersection between Adjacent Arc regions and Full Line regions.

\begin{lemma}\label{lemma:IFL-FLR2IAHIntersection}
    For all $a \in (0,\frac{1}{3})$, there exist unique values $\lambda_a \in \mathcal{D}_{AA}$ and $\theta \in (0,\frac{\pi}{2})$ such that $A_{AA}(\theta,a) = \lambda_a$ and $P_{AA}(\theta,a)= 1$.
\end{lemma}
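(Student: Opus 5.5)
The argument runs parallel to the proof of Lemma~\ref{lem:gamma_exists}, with Adjacent Arc regions in place of Opposite Arc regions. Fix $a\in(0,\frac{1}{3})$. By the formulas above, $P_{AA}(\theta,a)=(1-a)\theta\csc\theta = P_{OA}(\theta,a)$. Lemma~\ref{lemma:IFL-Bisection} shows that $\theta\mapsto P_{AA}(\theta,a)$ is a continuous, strictly increasing bijection from $(0,\frac{\pi}{2})$ onto $\left(1-a,\frac{\pi}{2}(1-a)\right)$; its limits are $1-a$ as $\theta\to0^+$ (Lemma~\ref{lem:perimeter_limits}) and $\frac{\pi}{2}(1-a)$ as $\theta\to\frac{\pi}{2}^-$.

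The first step is to place the value $1$ in this range. Since $a>0$, we have $1-a<1$. Since $a<\frac{1}{3}$, we have $\frac{\pi}{2}(1-a)>\frac{\pi}{2}\cdot\frac{2}{3}=\frac{\pi}{3}>1$. So $1\in\left(1-a,\frac{\pi}{2}(1-a)\right)$, and bijectivity gives a unique $\theta\in(0,\frac{\pi}{2})$ with $P_{AA}(\theta,a)=1$.

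We then define $\lambda_a:=A_{AA}(\theta,a)$. It remains to check that $\lambda_a\in\mathcal{D}_{AA}$ and that the pair is unique. By Lemma~\ref{lem:Area_Functions_Injective}, $A_{AA}$ is strictly decreasing in $\theta$, so its image of $(0,\frac{\pi}{2})$ is the open interval between its two endpoint limits:
\begin{itemize}
    \item as $\theta\to0^+$, $\theta\csc^2\theta-\cot\theta\to0$, so $A_{AA}\to\frac{1-a}{2}$;
    \item as $\theta\to\frac{\pi}{2}^-$, $\theta\csc^2\theta-\cot\theta\to\frac{\pi}{2}$, so $A_{AA}\to\frac{1-a}{2}-\frac{\pi}{8}(1-a)^2$.
\end{itemize}
This image is exactly $\mathcal{D}_{AA}$, so $\lambda_a\in\mathcal{D}_{AA}$.

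For uniqueness, the condition $P_{AA}(\theta,a)=1$ determines $\theta$ uniquely, and $\theta$ determines $\lambda_a$. Conversely, injectivity of $A_{AA}$ means $\lambda_a$ determines $\theta$, so the pair $(\lambda_a,\theta)$ is unique.

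As an alternative, one could deduce the result directly from Lemma~\ref{lem:gamma_exists} using the reflection symmetry of Lemma~\ref{lem:IFL-Symmetry1}, giving $\lambda_a=\frac{3}{4}(1-a^2)-\gamma_a$. I would still present the direct argument. There is no real obstacle here; the only place needing care is the endpoint computation showing the image of $A_{AA}$ is precisely $\mathcal{D}_{AA}$.
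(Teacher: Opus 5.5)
Your proposal is correct and follows the paper's proof: the bound $1-a<1<\frac{\pi}{3}<\frac{\pi}{2}(1-a)$ combined with the bijectivity of $P_{AA}$ from Lemma~\ref{lemma:IFL-Bisection} gives the unique $\theta$, and $\lambda_a$ is defined as $A_{AA}(\theta,a)$. You also check, via the monotonicity of $A_{AA}$ and its endpoint limits, that $\lambda_a$ really lies in $\mathcal{D}_{AA}$; the paper asserts this without argument.
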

\begin{proof}
    Let $a\in\left(0,\frac{1}{3}\right)$.
    Recall that
    \[P_{AA}\in\left(1-a,\frac{\pi}{2}(1-a)\right)\text{.}\]
    Since $1-a<1<\frac{\pi}{3}<\frac{\pi}{2}(1-a)$,
    then from Lemma~\ref{lemma:IFL-Bisection},
    we have that there exists a unique
    $\theta\in\left(0,\frac{\pi}{2}\right)$
    such that $P_{AA}(\theta,a)=1$.
    Thus, we can uniquely define
    $\lambda_a=A_{AA}(\theta,a)\in\mathcal{D}_{AA}$.
\end{proof}
\noindent For reference, we used numerical approximations to show the following graphs of $\gamma_a$ and $\lambda_a$, where each grid line represents $0.1$ units.
\begin{figure}[ht]
    \centering
    \begin{tikzpicture}[scale=6]
        \begin{scope}
            \foreach \ind in {0.1,0.2,0.3,0.4} {
                \draw[dotted] (\ind,0.5) -- (\ind,-0.05);
                \draw[dotted] (-0.05,\ind) -- (0.45,\ind);
            };
            \draw[very thick,->] (-0.05,0) -- (0.45,0) node[anchor=north west] {$a$};
            \draw[very thick,->] (0,-0.05) -- (0,0.55) node[anchor=south east]{$\gamma_a$};
            \draw[ultra thick,domain=0:0.375] plot(\x,0.270514027454345 + 2.61094408390567*\x - 22.4231408139832*\x*\x + 147.945860228386*\x*\x*\x - 577.995296353385*\x*\x*\x*\x + 1158.46040891728*\x*\x*\x*\x*\x - 924.331691421317*\x*\x*\x*\x*\x*\x);
        \end{scope}
        \begin{scope}[shift={(0.8,0)}]
            \foreach \ind in {0.1,0.2,0.3,0.4} {
                \draw[dotted] (\ind,0.5) -- (\ind,-0.05);
                \draw[dotted] (-0.05,\ind) -- (0.45,\ind);
            };
            \draw[very thick,->] (-0.05,0) -- (0.45,0) node[anchor=north west] {$a$};
            \draw[very thick,->] (0,-0.05) -- (0,0.55) node[anchor=south east]{$\lambda_a$};
            \draw[ultra thick,domain=0:0.375] plot(\x,0.4795-2.6109*\x+21.67314*\x*\x - 147.94586*\x*\x*\x + 577.9953*\x*\x*\x*\x - 1158.4604*\x*\x*\x*\x*\x + 924.3317*\x*\x*\x*\x*\x*\x);
        \end{scope}
    \end{tikzpicture}
    \caption{The constants $\gamma_a$ and $\lambda_a$}
    \label{fig:gamma_and_lambda}
\end{figure}
Now that these intersection areas $\gamma_a$ and $\lambda_a$ have been established, we must explore some of their properties. In Lemma \ref{lemma:IFL-Change}, we show how $\gamma_a$ grows, and in Lemma \ref{lem:Lambda_Greater_1overPi} we provide a lower bound for $\lambda_a$.

\begin{lemma}\label{lemma:IFL-Change}
    For all $a \in (0,\frac{1}{3})$, we have that
    $\frac{d\gamma_a}{da} > 0$.
\end{lemma}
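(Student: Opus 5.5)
We treat $\gamma_a$ as a composite function of $a$ through the angle determined by the constraint $P_{OA}(\theta,a)=1$. Write $\theta = \theta(a)$ for the unique angle from Lemma~\ref{lem:gamma_exists}, so that
\[
(1-a)\,\theta\csc\theta = 1, \qquad \gamma_a = A_{OA}(\theta(a),a).
\]
The function $g(\theta)=\theta\csc\theta$ is smooth and, by Lemma~\ref{lemma:IFL-Bisection}, has $g'(\theta)=\csc\theta(1-\theta\cot\theta)>0$ on $(0,\frac{\pi}{2})$. The implicit function theorem therefore makes $\theta(a)$ differentiable. Differentiating $g(\theta)=\frac{1}{1-a}$ gives
\[
\frac{d\theta}{da} = \frac{1}{(1-a)^2 g'(\theta)} > 0 .
\]
So as $a$ increases, the angle increases.

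Next we apply the chain rule:
\[
\frac{d\gamma_a}{da} = \frac{\partial A_{OA}}{\partial \theta}\frac{d\theta}{da} + \frac{\partial A_{OA}}{\partial a}.
\]
The first term is positive, since $\frac{\partial A_{OA}}{\partial\theta} = \frac{(1-a)^2}{2}\csc^2\theta(1-\theta\cot\theta) > 0$ by Lemma~\ref{lem:Area_Functions_Injective}. It simplifies nicely: dividing by $(1-a)^2 g'(\theta)$ gives
\[
\frac{\partial A_{OA}}{\partial\theta}\frac{d\theta}{da} = \frac{\csc\theta}{2} = \frac{1}{2\sin\theta}.
\]
This matches the computation $\frac{dP_{OA}}{dA_{OA}} = \frac{2\sin\theta}{1-a}$ in Lemma~\ref{lem:IFL-OHincreasing}.

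The second term is
\[
\frac{\partial A_{OA}}{\partial a} = 1-2a - \frac{1-a}{2}\,[\theta\csc^2\theta - \cot\theta + 1],
\]
which can be negative. The total derivative is therefore
\[
\frac{d\gamma_a}{da} = \frac{1}{2\sin\theta} + 1 - 2a - \frac{1-a}{2}\,h(\theta), \qquad h(\theta)=\theta\csc^2\theta-\cot\theta+1 .
\]
The main step is to show this is positive. To do so, we use the constraint to eliminate $a$: $1-a = \sin\theta/\theta$, so $1-2a = 2\sin\theta/\theta - 1$. Substituting, positivity reduces to a one-variable inequality in $\theta$:
\[
\frac{1}{2\sin\theta} + \frac{2\sin\theta}{\theta} - 1 - \frac{\sin\theta}{2\theta}\Big(\theta\csc^2\theta - \cot\theta + 1\Big) > 0 .
\]
This must hold on the range of $\theta$ corresponding to $a\in(0,\frac13)$, namely $\theta\in(0,\theta^*)$ with $\theta^*\csc\theta^* = \frac32$. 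The $\frac{\sin\theta}{2\theta}\cdot\theta\csc^2\theta$ piece cancels exactly against $\frac{1}{2\sin\theta}$, leaving
\[
\frac{2\sin\theta}{\theta} - 1 + \frac{\cos\theta}{2\theta} - \frac{\sin\theta}{2\theta} > 0,
\]
or equivalently
\[
3\sin\theta + \cos\theta > 2\theta .
\]

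This trigonometric inequality is the expected obstacle. On $[0,\frac{\pi}{2}]$, the left side is concave: its second derivative is $-(3\sin\theta+\cos\theta)<0$. The right side is linear. It therefore suffices to check both endpoints of the relevant $\theta$-interval:
\begin{itemize}
\item At $\theta=0$ the inequality reads $1>0$.
\item At $\theta^*$ we have $\theta^*<\frac{\pi}{2}$ (in fact $\theta^*\approx 1.5$), and we need $3\sin\theta^*+\cos\theta^*>2\theta^*$. Since $\sin\theta^* = \frac{2}{3}\theta^*$, this becomes $2\theta^* + \cos\theta^* > 2\theta^*$, i.e.\ $\cos\theta^*>0$, which holds because $\theta^*<\frac{\pi}{2}$.
\end{itemize}
Concavity then gives the inequality on the whole interval, so $\frac{d\gamma_a}{da}>0$ for all $a\in(0,\frac13)$.
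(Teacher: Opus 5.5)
Your proof is correct and follows the paper's route. The paper parametrizes by $\theta$ and takes $\frac{d\gamma_a/d\theta}{da/d\theta}$, while you use the implicit-function chain rule; both reach $\frac{d\gamma_a}{da}=\frac{3\sin\theta+\cos\theta-2\theta}{2\theta}$ and then use $\theta\csc\theta=\frac{1}{1-a}<\frac{3}{2}$. Your concavity-plus-endpoints step is unnecessary, though: that constraint already gives $3\sin\theta>2\theta$ at every $\theta$ in the range, and adding $\cos\theta>0$ finishes, which is exactly what the paper does.
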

\begin{proof}
    Let $a\in (0\ ,\frac{1}{3})$.
    Recall that $A_{OA}$ is only defined for
    $\theta \in (0,\frac{\pi}{2})$.
    Then, by Lemma~\ref{lem:gamma_exists} we have that $\gamma_a=A_{OA}(\theta,a)\in\mathcal{D}_{OA}$,
    where $P_{OA}(\theta,a)=(1-a)\theta\csc\theta=1$. Solving the perimeter equation for $a$ and substituting it into $A_{OA}$ gives us $\gamma_a$ as a function of $\theta$. Now by differentiating we have
    \[\frac{d\gamma_a}{da}=\frac{\frac{d\gamma_a}{d\theta}}{\frac{da}{d\theta}}=\frac{3\sin\theta+\cos\theta - 2\theta}{2\theta}.\]
    We can see that $\frac{d\gamma_a}{da} > 0$ with the following argument. When $a < \frac{1}{3}$ and $0<\theta < \frac{\pi}{2}$, we have:
    \begin{align*}
        (1-a)\theta\csc\theta=1
        &\implies \theta\csc\theta=\frac{1}{1-a} < \frac{3}{2}\\
        &\implies 0<3\sin\theta-2\theta < 3\sin\theta + \cos\theta - 2\theta = \frac{d\gamma_a}{da}.
    \end{align*}
\end{proof}

\begin{lemma}\label{lem:Lambda_Greater_1overPi}
    For all $a \in (0,1-\frac{2}{\pi})$, $\lambda_a > \frac{1}{\pi}$.
\end{lemma}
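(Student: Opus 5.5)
The plan is to reparametrize by the angle, as in the proof of Lemma \ref{lemma:IFL-Change}, and then reduce the inequality to a check at the endpoint of the interval. By Lemma \ref{lemma:IFL-FLR2IAHIntersection}, $\lambda_a = A_{AA}(\theta,a)$, where $\theta$ is determined by $P_{AA}(\theta,a) = (1-a)\theta\csc\theta = 1$, so $a = 1 - \frac{\sin\theta}{\theta}$. By Lemma \ref{lemma:IFL-Bisection}, the map $\theta \mapsto a$ is a strictly increasing bijection from $(0,\frac{\pi}{2})$ onto $(0, 1-\frac{2}{\pi})$. This explains the stated range: it is exactly the set of $a$ for which $P_{AA}$ attains the value $1$.

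Substituting $\frac{1-a}{2} = \frac{\sin\theta}{2\theta}$ into $A_{AA}$ gives an explicit one-variable function
\[
    L(\theta) = \frac{\sin\theta}{2\theta} - \frac{\sin^2\theta}{4\theta^2}\left[\theta\csc^2\theta - \cot\theta\right] = \frac{\sin\theta}{2\theta} - \frac{1}{4\theta} + \frac{\sin\theta\cos\theta}{4\theta^2}.
\]
The lemma then becomes the claim $L(\theta) > \frac{1}{\pi}$ for all $\theta \in (0,\frac{\pi}{2})$.

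First, I will show that $\lambda_a$ is decreasing in $a$. The cleanest route is Lemma \ref{lem:IFL-Symmetry1}: both $\gamma_a$ and $\lambda_a$ correspond to perimeter $1$, so
\[
    \lambda_a = \tfrac{3}{4}(1-a^2) - \gamma_a .
\]
Lemma \ref{lemma:IFL-Change} already gives $\frac{d\gamma_a}{da} > 0$, and its derivative computation extends verbatim to all $\theta \in (0,\frac{\pi}{2})$. Hence $\frac{d\lambda_a}{da} = -\frac{3}{2}a - \frac{d\gamma_a}{da} < 0$. Equivalently, one can differentiate $L$ directly and show $L'(\theta) < 0$ using $\tan\theta > \theta$ and $\sin\theta < \theta$. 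This reduces the lemma to the single endpoint inequality
\[
    \lim_{\theta\to\pi/2^-} L(\theta) \geq \frac{1}{\pi}.
\]
As a sanity check at the other end, $L(\theta) \to \frac{1}{2}$ as $\theta \to 0^+$, which matches $\lambda_a \to \frac{1}{2}$ as $a \to 0$ and agrees with the plot in Figure \ref{fig:gamma_and_lambda}.

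The main obstacle is this endpoint evaluation, and I would do it before anything else. Plugging $\theta = \frac{\pi}{2}$ into the closed form gives
\[
    L\!\left(\tfrac{\pi}{2}\right) = \frac{1}{\pi} - \frac{1}{2\pi} = \frac{1}{2\pi} < \frac{1}{\pi}.
\]
A spot check at $a = \frac{1}{3}$ also gives a value near $0.175 < \frac{1}{\pi}$. So the monotone reduction does not close on the full stated interval. The first thing to verify is the intended definition of $\lambda_a$ and which side of the domain $\mathcal{D}_{AA}$ is meant, in particular whether the authors intend the area of the complementary region. If the definitions stand as written, the argument above proves $\lambda_a > \frac{1}{\pi}$ exactly for $a < a^\ast$, where $a^\ast$ is the unique solution of $L(\theta) = \frac{1}{\pi}$. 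The remaining work would then be to locate $a^\ast$ and reconcile the stated range $(0, 1-\frac{2}{\pi})$ with it.
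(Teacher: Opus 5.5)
Your computation is correct, and it shows more than you claim: the lemma is false on the stated interval, so no proof can exist. There is also nothing to reconcile about the ``intended'' definition.

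\begin{itemize}
\item The formula for $A_{AA}$ checks out geometrically. It is the strip $[0,\frac{1-a}{2}]\times[0,1]$ minus two pieces, each a sector minus a right triangle, of area $\frac12 r^2(\theta-\sin\theta\cos\theta)$ with $r=\frac{1-a}{2}\csc\theta$.
\item $\lambda_a$ is defined and used as an area in $\mathcal{D}_{AA}\subseteq[0,\frac12|Q_a|]$, so a complement reading is not available.
\item $L$ is continuous and $L(\theta)\to\frac{1}{2\pi}<\frac{1}{\pi}$ as $\theta\to\frac{\pi}{2}^{-}$, so $\lambda_a<\frac{1}{\pi}$ for all $a$ near $1-\frac{2}{\pi}$.
\item Your value $\lambda_{1/3}\approx 0.175$ is an explicit counterexample. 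So is $\lambda_{1/5}\approx 0.254$, which lies well inside the range $(0,\frac13)$ of Lemma~\ref{lemma:IFL-FLR2IAHIntersection}.
\end{itemize}

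The paper's proof fails exactly where you looked. It evaluates $A_{AA}(\frac{\pi}{2},1-\frac{2}{\pi})$ as $\frac{3}{2\pi}$, whereas the value is $\frac{1}{\pi}-\frac{1}{\pi^2}\cdot\frac{\pi}{2}=\frac{1}{2\pi}$. Its concavity step is also wrong. One has $\frac{d\lambda_a}{da}=-\frac{\theta+\cos\theta}{2\theta}$ and $\frac{d^2\lambda_a}{da^2}=\frac{\theta\sin\theta+\cos\theta}{2(\sin\theta-\theta\cos\theta)}>0$. So $\lambda_a$ is strictly decreasing and convex, falling from $\frac12$ to $\frac{1}{2\pi}$. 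The concavity error alone could be repaired using monotonicity; the endpoint error cannot.

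The correct statement is the one your argument delivers: $\lambda_a>\frac{1}{\pi}$ if and only if $a<a^\ast=1-\frac{\sin\theta^\ast}{\theta^\ast}\approx 0.117$, where $L(\theta^\ast)=\frac{1}{\pi}$ and $\theta^\ast\approx 0.855$. This suffices for the paper, because the lemma is invoked only in Theorems~\ref{theorem:Region_5} and~\ref{theorem:Region_4}, where $a<a_2\approx 0.0623$. A rigorous version needs only monotonicity plus one evaluation, e.g.\ $L(0.8)\approx 0.331>\frac{1}{\pi}$ at $a=1-\frac{\sin 0.8}{0.8}\approx 0.103$.

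Two small fixes to your write-up:
\begin{itemize}
\item Positivity of $\frac{d\gamma_a}{da}$ does not extend to all of $(0,\frac{\pi}{2})$. The expression $3\sin\theta+\cos\theta-2\theta$ equals $3-\pi<0$ at $\theta=\frac{\pi}{2}$, and Lemma~\ref{lemma:IFL-Change} really needs $a<\frac13$. Your conclusion still holds, because $-\frac32 a-\frac{d\gamma_a}{da}$ simplifies to $-\frac{\theta+\cos\theta}{2\theta}<0$. Your direct formula $L'(\theta)=\frac{(\theta\cos\theta-\sin\theta)(\theta+\cos\theta)}{2\theta^3}<0$ also works throughout.
\item Monotonicity of $\theta\mapsto 1-\frac{\sin\theta}{\theta}$ follows directly from $\tan\theta>\theta$, not from Lemma~\ref{lemma:IFL-Bisection}.
\end{itemize}
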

\begin{proof}
    By definition, $\lambda_a$ occurs when we have $(1-a)\theta\csc\theta = 1$. Solving this for $a$ and substituting into $A_{AA}(\theta,a)$ gives us $\lambda_a$ as a function of $\theta$. We will calculate the second derivative of $\lambda_a$ with respect to $a$.
    \[
        \frac{d^2\lambda_a}{da^2} = \frac{\frac{d^2\lambda_a}{d\theta^2}\frac{da}{d\theta} - \frac{d\lambda_a}{d\theta}\frac{d^2a}{d\theta^2}}{\left(\frac{da}{d\theta}\right)^3} = \frac{\theta\sin\theta + \cos\theta}{\theta\cos\theta - \sin\theta}.
    \]
    Since $\theta \in (0,\frac{\pi}{2})$, we have that the numerator is always positive, and the denominator is always negative. Hence, $\lambda_a$ is concave down. Thus, its minimum occurs at an end point. Also, we have that
    \[
        \lim_{a \to 0^+} \lambda_a = \lim_{\theta\to 0^+}A_{AA}(\theta,0) = \frac{1}{2} > \frac{1}{\pi},
    \]
    and
    \[
        \lim_{a \to (1-\frac{2}{\pi})^-} \lambda_a = A_{AA}\left(\frac{\pi}{2},1-\frac{2}{\pi}\right) = \frac{3}{2\pi} > \frac{1}{\pi}.
    \]
    This concludes the proof.
\end{proof}

Now that we know more about these intersection points, we can establish a value of $a$ for which another triple intersection occurs. This time Opposite Arc regions, Adjacent Arc regions, and Full Line regions all intersect.

\begin{lemma}\label{lemma:IFL-Unique_a2}
    There exist unique values $a_2 \in (0,1)$
    and $\theta \in (0,\frac{\pi}{2})$
    such that $A_{OA}(\theta,a_2) = A_{AA}(\theta,a_2)$,
    and $P_{OA}(\theta,a_2)=P_{AA}(\theta,a_2) = 1$.
\end{lemma}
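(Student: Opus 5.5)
The plan is to reduce the statement to a one-variable root-finding problem along the curve of perimeter $1$. The key observation is that $P_{OA}(\theta,a)=P_{AA}(\theta,a)=(1-a)\theta\csc\theta$, so the two perimeter conditions collapse to the single constraint $(1-a)\theta\csc\theta=1$. By Lemma \ref{lemma:IFL-Bisection}, for fixed $a$ this constraint has a solution $\theta\in(0,\frac{\pi}{2})$ exactly when $1-a<1<\frac{\pi}{2}(1-a)$, that is, when $a\in(0,1-\frac{2}{\pi})$, and the solution is then unique. Equivalently, the map $\theta\mapsto a(\theta)=1-\frac{\sin\theta}{\theta}$ is a strictly increasing bijection from $(0,\frac{\pi}{2})$ onto $(0,1-\frac{2}{\pi})$. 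So it suffices to show that the one-variable function
\[
H(\theta):=A_{OA}(\theta,a(\theta))-A_{AA}(\theta,a(\theta))
\]
has exactly one zero on $(0,\frac{\pi}{2})$. This zero gives $\theta$, and then $a_2=a(\theta)$; uniqueness of the pair follows from the bijection. On $(0,\frac13)$ the function $H$ is just $\gamma_a-\lambda_a$.

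\textbf{Existence.} I will substitute $s=1-a=\frac{\sin\theta}{\theta}$ and divide $H$ by $s$, using the simplifications $s\,\theta\csc^2\theta=\csc\theta$ and $s\cot\theta=\frac{\cos\theta}{\theta}$. This should give
\[
\frac{H(\theta)}{1-a}=\frac12-\frac{3\sin\theta}{4\theta}+\frac{1}{2\sin\theta}-\frac{\cos\theta}{2\theta}.
\]
Expanding near $0$, the two singular terms cancel to $O(\theta)$, so the limit as $\theta\to0^+$ is $\frac12-\frac34=-\frac14<0$. At $\theta=\frac{\pi}{2}$ the value is $1-\frac{3}{2\pi}>0$. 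The intermediate value theorem then gives a root.

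\textbf{Uniqueness.} Rather than differentiate this expression directly, I will use the identity $A_{OA}+A_{AA}=\frac34(1-a^2)$ from Lemma \ref{lem:IFL-Symmetry1}. With it,
\[
H=2A_{OA}(\theta,a(\theta))-\tfrac34(1-a^2),
\qquad
\frac{dH}{da}=2\frac{d\gamma}{da}+\frac{3a}{2},
\]
where $\gamma$ here means $A_{OA}$ along the perimeter-one curve. The computation in Lemma \ref{lemma:IFL-Change} gives
\[
\frac{d\gamma}{da}=\frac{3\sin\theta+\cos\theta-2\theta}{2\theta}.
\]
That derivation only used the constraint $(1-a)\theta\csc\theta=1$, so the formula should hold on the whole range $\theta\in(0,\frac{\pi}{2})$, not just for $a<\frac13$. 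Its positivity follows because $3\sin\theta>2\theta$ on $(0,\frac{\pi}{2}]$, by concavity of $\sin$ and the check $3>\pi$ at the endpoint. Hence $H$ is strictly increasing in $a$, and therefore in $\theta$, which gives uniqueness.

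The main obstacle is extending Lemma \ref{lemma:IFL-Change} past $a=\frac13$. I would re-derive the derivative formula along the parametrization $a(\theta)$ to confirm it does not depend on that hypothesis. If it does, I would instead differentiate the explicit expression for $H/(1-a)$ above. Multiplying through by $\sin\theta$ should make it more tractable, though this is more computational.
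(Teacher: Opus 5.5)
Your reduction to the curve $a(\theta)=1-\frac{\sin\theta}{\theta}$ is correct, and so is your closed form for $H/(1-a)$. The existence argument, with limits $-\frac14$ and $1-\frac{3}{2\pi}$, also holds; it agrees with the paper's $F$ up to the positive factor $\frac{\sin\theta}{\theta}$.

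The gap is in uniqueness. The inequality $3\sin\theta>2\theta$ does not hold on all of $(0,\frac{\pi}{2}]$: at the endpoint it reads $3>\pi$, which is false. In fact $3\sin\theta>2\theta$ is equivalent to $\frac{\sin\theta}{\theta}>\frac23$, i.e.\ to $a<\frac13$. That is exactly why Lemma~\ref{lemma:IFL-Change} carries that hypothesis. You are right that the derivative formula itself only uses the perimeter constraint, but its sign does not survive past $a=\frac13$. At $\theta=\frac{\pi}{2}$ it equals $\frac{3-\pi}{\pi}<0$, so $\frac{d\gamma}{da}<0$ for all $\theta$ near $\frac{\pi}{2}$. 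There the step ``$\frac{dH}{da}>0$ because $\frac{d\gamma}{da}>0$'' fails. Your proposed check (re-deriving the formula without the hypothesis on $a$) would not detect this, because the formula is fine; only its sign is not.

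The repair is to keep the term $\frac{3a}{2}$ rather than treat it as merely nonnegative. Substituting $a=\frac{\theta-\sin\theta}{\theta}$ gives
\[
\frac{dH}{da}=\frac{3\sin\theta+\cos\theta-2\theta}{\theta}+\frac{3(\theta-\sin\theta)}{2\theta}=\frac{3\sin\theta+2\cos\theta-\theta}{2\theta}.
\]
On $(0,\frac{\pi}{2})$ we have $3\sin\theta+2\cos\theta\ge 2(\sin\theta+\cos\theta)\ge 2>\theta$, so $H$ is strictly increasing and the zero is unique. Multiplying by $\frac{da}{d\theta}=\frac{\sin\theta-\theta\cos\theta}{\theta^2}>0$ recovers exactly the paper's $F'(\theta)$. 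Its factor $\theta-2\cos\theta-3\sin\theta$ is the negative of your numerator. So, once patched, your argument is the paper's proof. The identity $A_{OA}+A_{AA}=\frac34(1-a^2)$ serves as a shortcut to the same derivative, letting you reuse the computation of Lemma~\ref{lemma:IFL-Change} instead of differentiating $F$ from scratch.
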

\begin{proof}
    Since
    \begin{align*}
        P_{OA}(\theta,a_2)=P_{AA}(\theta,a_2)=1 &\implies
        (1-a_2)\theta\csc\theta=1\\
        &\implies a_2 = 1-\frac{\sin\theta}{\theta}
    \end{align*}
    then $a_2$ is unique if and only if there is a unique $\theta \in (0,\frac{\pi}{2})$ which solves $A_{OA}(\theta,1-\frac{\sin\theta}{\theta}) = A_{AA}(\theta,1-\frac{\sin\theta}{\theta})$. By substitution we reach a solution to the equation when $\theta$ is a zero of the following function.
    Let
    \begin{align*}
        F(\theta) &= A_{OA}\left(\theta,1-\frac{\sin\theta}{\theta}\right)
        -A_{AA}\left(\theta,1-\frac{\sin\theta}{\theta}\right)\\
        &=\frac{2\theta + 2\theta\sin\theta
        -2\sin\theta\cos\theta-3\sin^2\theta}{4\theta^2}.
    \end{align*}
    Differentiating $F$ gives us
    \[F'(\theta) = \frac{(\theta -2\cos\theta - 3\sin\theta)(\theta\cos\theta-\sin\theta)}{2\theta^3}.\]
    We now need to show that $F'(\theta)$ is always positive. Note that the denominator of $F'$ is positive.
    We can see that $\theta\cos\theta - \sin\theta < 0$ since
    \[\lim_{\theta \to 0^+}(\theta\cos\theta - \sin\theta) = 0\]
    and
    \[\frac{d}{d\theta}(\theta\cos\theta - \sin\theta) = -\theta\sin\theta<0    \text{ for all }\theta \in \left(0,\frac{\pi}{2}\right).\]
    Since for all $\theta \in (0,\frac{\pi}{2})$,
    \[\frac{d^2}{d\theta^2}(\theta - 2\cos\theta - 3\sin\theta) =               2\cos\theta + 3\sin\theta>0\]
    and $\theta - 2\cos\theta - 3\sin\theta<0$ at $\theta = 0$ and $\theta = \frac{\pi}{2}$. 
    Then $\theta - 2\cos\theta - 3\sin\theta<0$ for all $\theta \in (0,\frac{\pi}{2})$.
    Thus $F'(\theta)>0$.
    Now notice that
    \[\lim_{\theta \to 0^+}F(\theta) = -\frac{1}{4} < 0\]
    and
    \[\lim_{\theta\to\frac{\pi}{2}^-}F(\theta) = \frac{2\pi-3}{\pi^2}>0.\]
    Since $F$ has opposite signs on its end points and is always increasing.
    Then $F$ has a unique zero in the interval $(0,\frac{\pi}{2})$.
\end{proof}

These values where triple intersections occur will be useful for us in the next section. Each time we encounter one of these values, our isoperimetric profile will change. Thus, we will have five different profile types separated by the four values of $a$:
\[
    a_1 \approx 0.0212, \hspace{1em} a_2 \approx0.0623, \hspace{1em} a_3=\frac{4-\pi}{4+3\pi}\approx0.06394, \hspace{1em} a_4 = \frac{1}{3}.
\]

\section{Proof of main theorem}\label{sec:MainTheorem}
Before providing a proof of the main theorem, we need a few more tools. The next four lemmas will make the proofs of the theorems simpler.

\begin{lemma}\label{lemma:PCL-QC_First}
   For all $a \in (0,1)$ and for all
   $A \in \left[0,\ \frac{(1-a)^2}{\pi}\right]$,
   $P_{QC}$ is a minimizing shape.
\end{lemma}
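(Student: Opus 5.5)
The plan is to compare the quarter-circle perimeter against every competitor that survives the elimination of Section~\ref{sec:region_elimination}. First I check that the quarter-circle region is actually available for every $A$ in the stated range. Since $\frac{1}{\pi} < \frac{\pi}{8}$, we have $\frac{(1-a)^2}{\pi} < \frac{\pi}{8}(1-a)^2$, so $\left[0,\frac{(1-a)^2}{\pi}\right] \subseteq \mathcal{D}_{QC}$ and a quarter circle of area $A$ fits in a corner of $Q_a$. Its relative perimeter is $P_{QC}(A)=\sqrt{\pi A}$, and the upper bound on $A$ gives
\[
\sqrt{\pi A}\le \sqrt{\pi\cdot \tfrac{1}{\pi}(1-a)^2}=1-a .
\]
I also note that $\frac{(1-a)^2}{\pi}\le \frac{1-a^2}{2}=\frac12|Q_a|$, since $\frac{1-a}{\pi}\le\frac{1+a}{2}$. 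So we are on the half of the domain where the profile is considered.

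Next I use the standard existence of a minimizer for each area, together with Theorem~\ref{thm:backgroundProperties}. Lemmas~\ref{lem:QCirclesMinimizers}, \ref{lemma:SPE}, \ref{lemma:DARE}, \ref{lem:no_horns} and \ref{lem:numberOfBoundaryComponents} then say that any minimizer of area $A$ is, up to taking complements, one of the six regions in Figure~\ref{fig:sixMinRegions}. Taking complements does not change the relative perimeter, so it suffices to bound from below the perimeter of each of the other five types, or of a complement of a quarter circle having area $A$.

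I then go through the types one at a time.
\begin{itemize}
\item Adjacent Lines and Opposite Lines regions both have perimeter exactly $1-a\ge\sqrt{\pi A}$.
\item A Full Line region has perimeter $1>1-a$.
\item Opposite Arc and Adjacent Arc regions have perimeter $(1-a)\theta\csc\theta$ with $\theta\in(0,\frac{\pi}{2})$. By Lemma~\ref{lemma:IFL-Bisection} (or simply $\theta\csc\theta>1$), this perimeter is strictly greater than $1-a$.
\item If the region of area $A$ is the complement of a quarter circle, that quarter circle has area $|Q_a|-A\ge A$. Its perimeter is therefore $\sqrt{\pi(|Q_a|-A)}\ge\sqrt{\pi A}$, when such a quarter circle exists at all.
\end{itemize}
Hence every candidate has perimeter at least $\sqrt{\pi A}$, so $f_a(A)=\sqrt{\pi A}$ is attained by the quarter-circle region.

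The main obstacle I expect is bookkeeping rather than estimates. The elimination lemmas are partly stated only for regions at most half the area, or only for particular ranges of $a$. I must make sure that the six-type classification really covers every minimizer of area $A$ for all $a\in(0,1)$, including the complements. I would handle this by arguing through perimeters only: every region type, and every complement of one, has perimeter either at least $1-a$ or of quarter-circle form $\sqrt{\pi B}$ with $B\ge A$. That reduces the whole lemma to the inequality $\sqrt{\pi A}\le 1-a$ above.
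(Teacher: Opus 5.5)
Your proof is correct and follows the paper's argument. The bound $A \le (1-a)^2/\pi$ gives $\sqrt{\pi A} \le 1-a$, and every other surviving region type has perimeter at least $1-a$. Your extra checks make explicit what the paper's one-line proof leaves implicit: that a quarter circle of area $A$ actually fits in a corner (since $\frac{1}{\pi} < \frac{\pi}{8}$), and that complements cause no trouble.
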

\begin{proof}
    Since $A \in \left[0,\ \frac{(1-a)^2}{\pi}\right]$ then
    \begin{align*}
        A\leq\dfrac{(1-a)^2}{\pi}
        &\implies\sqrt{\pi A}\leq1-a\\
        &\implies P_{QC}(A)\leq1-a
    \end{align*}
    and since the perimeters of all other minimizing shapes are bounded below by $1-a$ then $P_{QC}$ is a minimizing shape.
\end{proof}

\begin{lemma}\label{lemma:PCL-QC<ASP}
    For all $a\in\left(0,\ \frac{4-\pi}{4+3\pi}\right]$,
    $P_{QC}(A)\leq P_{AL}(A)$.
\end{lemma}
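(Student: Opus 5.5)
The plan is to compare the two perimeters directly on the area domain of the Adjacent Lines region. This is the only place where $P_{AL}$ is defined, and there $P_{AL}(A)=1-a$ is constant. The Quarter Circle perimeter is $P_{QC}(A)=\sqrt{\pi A}$, which is increasing in $A$. So it suffices to show that at the largest adjacent-lines area, $U_a=\left(\frac{1-a}{2}\right)^2+a(1-a)$, we have $\sqrt{\pi U_a}\le 1-a$. We also check that this area is actually realizable by a quarter circle, i.e.\ $U_a\le \frac{\pi}{8}(1-a)^2$. Both facts should follow from the single inequality $U_a\le \frac{(1-a)^2}{\pi}$, since $\frac{1}{\pi}<\frac{\pi}{8}$. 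This is essentially the situation of Lemma~\ref{lemma:PCL-QC_First}, and the whole proof reduces to showing $\mathcal{D}_{AL}\subseteq\left[0,\frac{(1-a)^2}{\pi}\right]$.

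The key step is the algebra. The inequality
\[
\frac{(1-a)^2}{4}+a(1-a)\le\frac{(1-a)^2}{\pi}
\]
becomes, after dividing by $1-a>0$,
\[
a\le (1-a)\left(\frac{1}{\pi}-\frac14\right)=(1-a)\frac{4-\pi}{4\pi}.
\]
Clearing denominators gives $4\pi a\le (4-\pi)(1-a)$. This rearranges to $a(4+3\pi)\le 4-\pi$, i.e.\ $a\le\frac{4-\pi}{4+3\pi}$, which is exactly the hypothesis. It is worth noting that this is the same threshold $a_3$ that appeared as the sign condition on $\lim_{\theta\to0^+}D(\theta)$ in Lemma~\ref{lem:mu_exists}. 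That is no coincidence: the OA profile starts at $U_a$ with perimeter $1-a$ by Lemma~\ref{lem:perimeter_limits}.

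Given this inclusion, for every $A\in\mathcal{D}_{AL}$ we get $P_{QC}(A)=\sqrt{\pi A}\le\sqrt{\pi U_a}\le 1-a=P_{AL}(A)$, with equality possible only at the endpoint $a=a_3$, $A=U_a$. Alternatively, we can invoke Lemma~\ref{lemma:PCL-QC_First} directly once the domain inclusion is established.

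I do not expect a real obstacle. The only care needed is interpretive: the statement compares functions that live on different domains. I would state explicitly that the comparison is on $\mathcal{D}_{AL}$ and verify that $\mathcal{D}_{AL}\subseteq\mathcal{D}_{QC}$ there, so that $P_{QC}$ is defined at every relevant area.
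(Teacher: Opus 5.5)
Your proposal is correct and matches the paper's proof: the paper derives $\frac{1}{4}(1-a)^2+a(1-a)\le\frac{1}{\pi}(1-a)^2$ from $a(4+3\pi)\le 4-\pi$ by the same chain of equivalences, and then concludes $\sqrt{\pi A}\le 1-a$ for every $A\in\mathcal{D}_{AL}$. Your extra check that $\mathcal{D}_{AL}\subseteq\mathcal{D}_{QC}$, using $\frac{1}{\pi}<\frac{\pi}{8}$, is a small point the paper leaves implicit.
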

\begin{proof}
  When $a\leq\frac{4-\pi}{4+3\pi}$ then we have,
  \begin{align*}
    a(4+3\pi) &\leq 4-\pi\\
    4a+3\pi a &\leq 4-\pi\\
    4a+4\pi a-\pi a &\leq 4-\pi\\
    4\pi a+a(4-\pi) &\leq 4-\pi\\
    4\pi a &\leq 4-\pi-a(4-\pi)\\
    4\pi a &\leq (4-\pi)(1-a)\\
    \frac{a}{1-a} &\leq \frac{4-\pi}{4\pi}\\
    \frac{a}{1-a} &\leq \frac{1}{\pi}-\frac{1}{4}\\
    \frac{1}{4}+\frac{a}{1-a} &\leq \frac{1}{\pi}\\
    \frac{1}{4}(1-a)^2+a(1-a) &\leq \frac{1}{\pi}(1-a)^2\text{.}
  \end{align*}
  Let
  $A\in\left[\frac{(1-a)^2}{4},
  \frac{(1-a)^2}{4}+a(1-a)\right]=\mathcal{D}_{AL}$
  then $A\leq\frac{(1-a)^2}{\pi}$.
  Thus
  \[\sqrt{\pi A}\leq\sqrt{\pi\cdot\dfrac{(1-a)^2}{\pi}}=1-a\]
  and so $P_{QC}(A)\leq P_{AL}(A)$.
\end{proof}

\begin{lemma}\label{lemma:PCL-ASP_Minimizer}
  For all $a\in\left[\frac{4-\pi}{4+3\pi},1\right)$ and for all
  \[A\in\left[\dfrac{(1-a)^2}{\pi},\dfrac{(1-a)^2}{4}+a(1-a)\right],\]
  $P_{AL}$ is a minimizing shape.
\end{lemma}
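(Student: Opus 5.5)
The plan mirrors Lemma \ref{lemma:PCL-QC_First}, using $1-a$ as the universal lower bound. By the reduction in Section \ref{sec:region_elimination} (Lemmas \ref{lemma:SPE}, \ref{lemma:DARE}, \ref{lem:no_horns}, \ref{lem:numberOfBoundaryComponents}), every minimizer is one of the six regions of Figure \ref{fig:sixMinRegions}. It therefore suffices to show two things for each $A$ in the stated interval: an Adjacent Lines region of area $A$ exists, and no other of the six types attains perimeter below $P_{AL}=1-a$.

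\textbf{Step 1: Adjacent Lines is admissible.} The interval must lie inside $\mathcal{D}_{AL}$, i.e.\ $\frac{(1-a)^2}{4}\le\frac{(1-a)^2}{\pi}$, which holds since $\pi<4$. The upper endpoint coincides with that of $\mathcal{D}_{AL}$. The interval is nonempty exactly when $\frac{(1-a)^2}{\pi}\le \frac{(1-a)^2}{4}+a(1-a)$, and reversing the chain of equivalences in Lemma \ref{lemma:PCL-QC<ASP} shows this is equivalent to $a\ge \frac{4-\pi}{4+3\pi}$. This is where the hypothesis on $a$ is used.

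\textbf{Step 2: compare with the other five types.} Opposite Lines has perimeter exactly $1-a$, so it ties. For Opposite Arcs and Adjacent Arcs, Lemma \ref{lemma:IFL-Bisection} shows the perimeter ranges over $(1-a,\frac{\pi}{2}(1-a))$, so it is strictly larger than $1-a$. Full Line has perimeter $1>1-a$. For Quarter Circle, the perimeter is $\sqrt{\pi A}$, and $A\ge \frac{(1-a)^2}{\pi}$ gives $\sqrt{\pi A}\ge 1-a$. Hence $P_{AL}(A)=1-a\le P(S)$ for every candidate $S$ with $|S|=A$, so $P_{AL}$ realizes $f_a(A)$.

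The main obstacle is the appeal to the reduction theorem. One must check that the six-region classification applies for every $a$ in this range: several elimination lemmas split at $a=\frac13$, and Lemma \ref{lemma:SPE} leaves a gap for same-side lines when $\frac{2}{2+\pi}\le a\le\frac13$. The cleanest route avoids this entirely by using Theorem \ref{thm:backgroundProperties} directly. Any competing boundary must contain either a full line of length $1$, two segments or arcs each spanning a gap of width $\frac{1-a}{2}$, or closed or corner arcs, which by Lemma \ref{lem:quarterCircleBound} have perimeter at least $\sqrt{\pi A}\ge 1-a$. In every case the perimeter is at least $1-a$, so the lower bound holds without relying on the full classification.
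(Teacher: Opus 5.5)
Your Steps 1 and 2 are correct and follow the paper's proof essentially line for line: the interval is nonempty exactly when $a\ge\frac{4-\pi}{4+3\pi}$, and then $\sqrt{\pi A}\ge 1-a$, $P_{FL}=1>1-a$, both arc perimeters exceed $1-a$, and Opposite Lines ties (your extra Opposite Arcs comparison is harmless; the paper omits it because the interval's right endpoint is the excluded left endpoint of $\mathcal{D}_{OA}$). One correction to your closing remark: Lemma~\ref{lemma:SPE} has no gap, since $\frac{2}{2+\pi}\approx 0.389>\frac13$, so the range $\frac{2}{2+\pi}\le a\le\frac13$ is empty and the lemma's two cases $a<\frac{2}{2+\pi}$ and $a>\frac13$ already cover all of $(0,1)$.
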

\begin{proof}
  Let $a\geq\frac{4-\pi}{4+3\pi}$ then we have,
  \[\frac{(1-a)^2}{\pi}\leq\frac{(1-a)^2}{4}+a(1-a)\text{.}\]
  Since $\left[\frac{(1-a)^2}{\pi},\frac{(1-a)^2}{4}+a(1-a)\right]$ is a non-empty subset of
  \[\left[\dfrac{(1-a)^2}{4},
  \dfrac{(1-a)^2}{4}+a(1-a)\right]=\mathcal{D}_{AL},\]
  we then let $A\in\left[\frac{(1-a)^2}{\pi},\frac{(1-a)^2}{4}+a(1-a)\right]$.
  Then
  \[A\geq\dfrac{(1-a)^2}{\pi} \implies \sqrt{\pi A}\geq1-a\]
  so $P_{QC}(A)\geq P_{AL}(A)$ and since for all $a\in(0,1)$ we have $P_{AL}=1-a$, we then compare
  \begin{itemize}
    \item $P_{FL}=1>1-a$
    \item $P_{AA}>1-a$
    \item $P_{OL}=1-a$.
  \end{itemize}
  Therefore $P_{AL}$ is a minimizing shape.
\end{proof}

\begin{lemma}\label{lemma:PCL-OSP_Final}
    For all $a\in (0,1)$ and for all $A\in\left[\frac{1-a}{2},\frac{1-a^2}{2}\right]$, $P_{OL}$ is the minimizing shape.
\end{lemma}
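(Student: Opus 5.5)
The plan is to compare $P_{OL}=1-a$ against each of the other five candidate region types of Figure~\ref{fig:sixMinRegions}, restricted to the area window $A\in\left[\frac{1-a}{2},\frac{1-a^2}{2}\right]=\mathcal{D}_{OL}$. By Lemmas~\ref{lemma:SPE}, \ref{lemma:DARE}, \ref{lem:no_horns}, \ref{lem:QCirclesMinimizers} and \ref{lem:numberOfBoundaryComponents}, any minimizer of area $A$ must be one of these six types. Note first that every $A$ in this interval is realized by an opposite lines region: the two segments of length $\frac{1-a}{2}$ can be slid from the central square toward the outer boundary, giving every area between $\frac{1-a}{2}$ and half of $|Q_a|$. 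So $P_{OL}(A)=1-a$ is attained throughout the interval. It then suffices to show that each other candidate either does not realize area $A$ or has perimeter at least $1-a$.

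\textbf{Step 1 (line-type regions).} Full Line regions have perimeter $1>1-a$. Adjacent Lines regions have perimeter exactly $1-a$, so they can only tie and cannot beat $P_{OL}$.

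\textbf{Step 2 (arc regions).} By Lemma~\ref{lemma:IFL-Bisection}, together with the limit in Lemma~\ref{lem:perimeter_limits}, $P_{OA}$ and $P_{AA}$ take values in $\left(1-a,\frac{\pi}{2}(1-a)\right)$. Hence they are strictly larger than $1-a$ wherever they are defined, in particular on any part of $\mathcal{D}_{OA}$ or $\mathcal{D}_{AA}$ meeting $\mathcal{D}_{OL}$.

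\textbf{Step 3 (quarter circles).} This is the only candidate that could possibly do better. For $A\in\mathcal{D}_{QC}$ we have $P_{QC}(A)=\sqrt{\pi A}$. The inequality $\sqrt{\pi A}\ge 1-a$ holds exactly when $A\ge\frac{(1-a)^2}{\pi}$. Since $A\ge\frac{1-a}{2}$, it is enough to check $\frac{1-a}{2}\ge\frac{(1-a)^2}{\pi}$, i.e. $\pi\ge 2(1-a)$, which holds for all $a\in(0,1)$. So on $\mathcal{D}_{OL}$ quarter circles never beat $1-a$. Combining Steps 1--3, no candidate has perimeter below $1-a$, so $P_{OL}$ is minimizing.

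The main obstacle is justifying that the six-type classification really exhausts all possibilities for every $a$. Lemma~\ref{lemma:SPE} only handles same-side lines for $a<\frac{2}{2+\pi}$ or $a>\frac13$, so I would lean on Lemma~\ref{lemma:PCL-QC_First}-style reasoning here. Specifically, a same-side lines region has perimeter $1-a$ anyway, so it also cannot beat $P_{OL}$. With that observation the argument is uniform in $a$, and the remaining checks are the elementary inequalities above.
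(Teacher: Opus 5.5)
Your proof is correct and follows the paper's argument: $P_{OA},P_{AA}>1-a$, adjacent lines only tie, and quarter circles lose because $\frac{(1-a)^2}{\pi}<\frac{1-a}{2}$. The paper discards full lines and adjacent arcs by their area domains rather than by perimeter, which makes no real difference. Your worry about same-side lines is unnecessary, since $\frac{2}{2+\pi}>\frac13$ means Lemma~\ref{lemma:SPE} already covers every $a\in(0,1)$; your direct observation that such regions have perimeter $1-a$ settles it anyway.
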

\begin{proof}
    Let $a\in (0,1)$ and
    $A\in\left[\frac{1-a}{2},\frac{1-a^2}{2}\right]$.
    Then $A\not\in\mathcal{D}_{FL}$ and $A\not\in\mathcal{D}_{AA}$.
    Since $P_{OL}=P_{AL}<P_{OA}$ then $P_{OA}$ is not a minimizing shape and for any area bounded by $P_{AL}$, we can bound the same area with $P_{OL}$ without increasing perimeter.
    Now for $P_{QC}(A)\leq P_{OL}(A)$ then $A\leq\frac{(1-a)^2}{\pi}$
    and for
    \[\frac{(1-a)^2}{\pi}\in\left[\frac{1-a}{2},\frac{1-a^2}{2}\right]\]
    then
    \[\frac{1-a}{2}\leq\frac{(1-a)^2}{\pi}
        \implies a\leq1-\frac{\pi}{2}<0.\]
    Thus $P_{QC}$ is not a minimizing shape.
    Therefore, $P_{OL}$ is a minimizing shape.
\end{proof}

From here we will prove the main theorem. As previously stated, our main theorem provides the relative isoperimetric profile $f_a:[0,\frac{1}{2}|Q_a|] \to \mathbb{R}$ for all possible $a \in (0,1)$. Theorems \ref{theorem:Region_5} through \ref{theorem:Region_1} show that the profiles provides are indeed minimal within a particular range of $a$-values.

\begin{theorem}\label{theorem:Region_5}
    For all $a \in (0,a_1)$, the relative isoperimetric profile function, $f_a:\left[0,\frac{1}{2}|Q_a|\right] \to \mathbb{R}$, is given by
    \begin{equation*}
        f_a(A)=
        \left\{
        \begin{array}{c@{\,,\quad}c@{\;}c@{\;}c@{\;}c@{\;}c@{\quad-\quad}l}
            \sqrt{\pi A}
            & 0 & \leq & A & \leq & \dfrac{1}{\pi}
            & \textnormal{Quarter Circle}\\[0.6em]

            1
            & \dfrac{1}{\pi} & \leq & A & \leq & \lambda_a
            & \textnormal{Full Line}\\[0.6em]

            P_{AA}(A)
            & \lambda_a & \leq & A & < & \dfrac{1-a}{2}
            & \textnormal{Adjacent Arcs}\\[0.6em]

            1-a
            & \dfrac{1-a}{2} & \leq & A & \leq & \dfrac{1}{2}|Q_a|
            & \textnormal{Opposite Lines.}
        \end{array}
        \right.
    \end{equation*}
\end{theorem}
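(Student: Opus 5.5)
By Section~\ref{sec:region_elimination}, any minimizer is one of the six region types of Figure~\ref{fig:sixMinRegions}. So for each $A\in[0,\frac12|Q_a|]$ the plan is to compute $\min\{P_X(A): A\in\mathcal{D}_X\}$ over $X\in\{QC,AL,OA,FL,AA,OL\}$ and show it equals the stated piecewise formula when $a<a_1$. The comparison splits along the breakpoints $\frac1\pi,\lambda_a,\frac{1-a}{2}$.

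\textbf{Quarter Circle and Opposite Lines pieces.} On $[0,\frac1\pi]$ I first use $\frac1\pi\le\frac{\pi}{8}(1-a)^2$, which follows from $a<a_1$ since $\mu_a\le\frac{\pi}{8}(1-a)^2$ and $\mu_a>\frac1\pi$ by Lemma~\ref{lem:a1_exists}. This makes QC available, with $P=\sqrt{\pi A}\le 1$. Every competitor must be beaten:
\begin{itemize}
\item FL has perimeter $1$, so it never wins here.
\item AL has perimeter $1-a$, but by Lemma~\ref{lemma:PCL-QC<ASP} QC does at least as well on $\mathcal{D}_{AL}$, since $a_1<a_3$.
\item On $(U_a,\mu_a)$, Lemma~\ref{lem:mu_exists} gives $P_{OA}>\sqrt{\pi A}$. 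Since $\frac1\pi<\mu_a$, this covers all of $\mathcal{D}_{OA}\cap[0,\frac1\pi]$.
\item AA and OL have perimeter at least $1-a$, and one checks their domains start at or above where $\sqrt{\pi A}$ can exceed $1-a$.
\end{itemize}
More precisely, for $A\le\frac{(1-a)^2}{\pi}$, Lemma~\ref{lemma:PCL-QC_First} handles everything. For $A\in(\frac{(1-a)^2}{\pi},\frac1\pi]$, only AA and OA can have perimeter below $\sqrt{\pi A}\le1$. Lemma~\ref{lem:Lambda_Greater_1overPi} gives $\lambda_a>\frac1\pi$, and $P_{AA}$ is decreasing with $P_{AA}(\lambda_a)=1$, so $P_{AA}>1\ge\sqrt{\pi A}$ there. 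The last piece, $[\frac{1-a}{2},\frac12|Q_a|]$, is exactly Lemma~\ref{lemma:PCL-OSP_Final}.

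\textbf{Full Line piece, $[\frac1\pi,\lambda_a]$.} The candidate perimeter is $1$, and I rule out each competitor:
\begin{itemize}
\item QC is either unavailable or gives $\sqrt{\pi A}\ge1$.
\item For OA I need $P_{OA}\ge1$, i.e.\ $A\ge\gamma_a$ (Lemma~\ref{lem:gamma_exists}). The key point is that $\mu_a>\frac1\pi$ with $P_{OA}(\mu_a)=\sqrt{\pi\mu_a}>1$, and $P_{OA}$ is increasing (Lemma~\ref{lem:IFL-OHincreasing}). So $\gamma_a<\mu_a$, and on $(U_a,\gamma_a)$ the OA perimeter is below $1$. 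This could appear to threaten the FL piece, but on that range $A<\gamma_a<\mu_a$, so we are in the region where QC beats OA.
\end{itemize}
I will therefore need $\gamma_a\le\frac1\pi$ for $a<a_1$: otherwise some $A\in(\frac1\pi,\gamma_a)$ would have OA perimeter below $1$ with QC no longer better. I expect this to be the main obstacle. My approach is to observe that at $a=a_1$ we have $P_{OA}(\mu_{a_1})=1$, so $\gamma_{a_1}=\mu_{a_1}=\frac1\pi$. Lemma~\ref{lemma:IFL-Change} ($\gamma_a$ increasing in $a$) then gives $\gamma_a<\frac1\pi$ for $a<a_1$. Hence for $A\ge\frac1\pi$ we get $P_{OA}(A)>1$.

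The remaining competitors on this piece:
\begin{itemize}
\item AL's domain ends at $U_a<\gamma_a$, so at these areas it is unavailable (and QC beats it anyway).
\item AA has $P_{AA}(A)\ge1$ exactly when $A\le\lambda_a$, by monotonicity.
\item OL is unavailable, since $\lambda_a<\frac{1-a}{2}$.
\end{itemize}

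\textbf{Adjacent Arcs piece, $[\lambda_a,\frac{1-a}{2})$.} Here $P_{AA}\in(1-a,1]$, and:
\begin{itemize}
\item FL gives $1$.
\item QC gives $\sqrt{\pi A}\ge1$, since $\lambda_a>\frac1\pi$.
\item OA: if $A>\gamma_a$ then $P_{OA}>1$. Also $\lambda_a>\gamma_a$ for $a<a_2$ by Lemma~\ref{lemma:IFL-Unique_a2} and continuity, since the sign of $\lambda_a-\gamma_a$ is fixed on $(0,a_2)$ and $\lambda_0=\frac12>\gamma_0$.
\item AL requires $A\le U_a<\gamma_a$, so it is unavailable.
\item OL requires $A\ge\frac{1-a}{2}$, so it is unavailable.
\end{itemize}
Finally, I will check continuity at the breakpoints to confirm the inequalities at the endpoints.
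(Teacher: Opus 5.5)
Your proposal is correct. Its skeleton matches the paper's: restrict to the six surviving region types and compare them on the intervals cut out by $\frac1\pi$, $\lambda_a$ and $\frac{1-a}{2}$. You use the same lemmas as the paper for the Quarter Circle, Adjacent Lines, Adjacent Arcs and Opposite Lines comparisons.

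Where you differ is in disposing of the Opposite Arc regions. The paper does this once, globally, by splitting at $\mu_a$. For $A\le\mu_a$, Lemma~\ref{lem:mu_exists} gives $P_{OA}(A)\ge\sqrt{\pi A}$. For $A>\mu_a$, monotonicity gives $P_{OA}(A)>P_{OA}(\mu_a)=\sqrt{\pi\mu_a}>1$.

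You instead prove $\gamma_a<\frac1\pi$ by noting $\gamma_{a_1}=\mu_{a_1}=\frac1\pi$ and invoking Lemma~\ref{lemma:IFL-Change}. That is valid, but the step you call the main obstacle follows at once from what you had just written. Since $U_a<\gamma_a<\mu_a$, Lemma~\ref{lem:mu_exists} yields $1=P_{OA}(\gamma_a)>\sqrt{\pi\gamma_a}$, that is, $\gamma_a<\frac1\pi$, with no need to vary $a$.

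Likewise, your continuity argument through Lemma~\ref{lemma:IFL-Unique_a2} for $\lambda_a>\gamma_a$ is unnecessary. It also relies on $a_1<a_2$, which the paper supports only numerically. The inequality $\gamma_a<\frac1\pi<\lambda_a$ already follows from the previous step and Lemma~\ref{lem:Lambda_Greater_1overPi}.

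In exchange, your write-up is more careful than the paper's at two points. First, you verify that quarter circles of area up to $\frac1\pi$ actually fit in a corner, via $\frac1\pi<\mu_a<\frac{\pi}{8}(1-a)^2$; the paper's proof never checks this. Second, you compare $P_{OA}$ itself against $P_{FL}=1$ for large areas, whereas the paper's displayed inequality for $A>\mu_a$ literally bounds $\sqrt{\pi A}$ instead.
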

\begin{proof}
    By Lemma \ref{lemma:PCL-QC<ASP}, no Adjacent Line regions will appear. Next we will eliminate the possibility of Opposite Arc regions. Let $A \in \mathcal{D}_{OA}$. If $A \leq \mu_a$, then by Lemma \ref{lem:mu_exists}, $P_{OA}(A) \geq \sqrt{\pi A} = P_{QC}(A)$. If $A > \mu_a$, then by Lemma \ref{lem:a1_exists} we have
    \[
        \sqrt{\pi A} > \sqrt{\pi \mu_a} > 1 = P_{FL}(A).
    \]
    Thus, no Opposite Arc region will appear.
    
    It suffices to find the minimum of the Quarter Circle, Full Line, Adjacent Arcs, and Opposite Lines regions. Begin by assuming $A \in [0,\frac{1}{\pi}]$. Then $A$ is not in $\mathcal{D}_{OL}$, and thus Opposite Line regions will not be considered. By Lemma \ref{lem:Lambda_Greater_1overPi}, $\lambda_a > \frac{1}{\pi}$, and therefore by Lemma \ref{lem:IFL-OHincreasing}, $P_{AA}(A) > 1$. Thus, Adjacent Arc regions will not be considered. Finally, since $A \leq \frac{1}{\pi}$, we have
    \[
        \sqrt{\pi A} \leq 1 = P_{FL}(A).
    \]
    Hence, Quarter Circle regions are the only option.
    
    Now assume that $A \in [\frac{1}{\pi},\lambda_a]$. This means $\sqrt{\pi A} \geq 1 = P_{FL}(A)$. So Quarter Circle regions are not minimum. From Lemma \ref{lem:IFL-OHincreasing}, we have that $P_{AA}(A) \geq 1 = P_{FL}(A)$. So Adjacent Arc regions are not minimum. Finally, $\lambda_a < \frac{1-a}{2}$, and thus an area of $A$ cannot be bound by Opposite Lines. Hence, Full Line regions are the only option.
    
    Assume that $A \in [\lambda_a,\frac{1-a}{2})$. Again, for these $A$ values, $\sqrt{\pi A} > 1$, eliminating Quarter Circle regions. From Lemma \ref{lem:IFL-OHincreasing} and from the definition of $\lambda_a$, we have $P_{AA}(A) \leq 1 = P_{FL}(A)$. This eliminates Full Line regions. Finally, $A \notin \mathcal{D}_{OL}$. Hence, Adjacent Arc regions are the only option.
    
    Assume that $A \in [\frac{1-a}{2},\frac{1}{2}|Q_a|]$. By Lemma \ref{lemma:PCL-OSP_Final}, Opposite Line regions are minimum. This concludes the proof.
\end{proof}

\begin{theorem}\label{theorem:Region_4}
    For all $a \in [a_1,a_2)$, the relative isoperimetric profile function, $f_a:\left[0,\frac{1}{2}|Q_a|\right] \to \mathbb{R}$, is given by
    \begin{equation*}
        f_a(A)=
        \left\{
        \begin{array}{c@{\,,\quad}c@{\;}c@{\;}c@{\;}c@{\;}c@{\quad-\quad}l}
            \sqrt{\pi A}
            & 0 & \leq & A & \leq & \mu_a
            & \textnormal{Quarter Circle}\\[0.6em]
    
            P_{OA}(A)
            & \mu_a & \leq & A & \leq & \gamma_a
            & \textnormal{Opposite Arcs}\\[0.6em]
    
            1
            & \gamma_a & \leq & A & \leq & \lambda_a
            & \textnormal{Full Line}\\[0.6em]
    
            P_{AA}(A)
            & \lambda_a & \leq & A & < & \dfrac{1-a}{2}
            & \textnormal{Adjacent Arcs}\\[0.6em]
    
            1-a
            & \dfrac{1-a}{2} & \leq & A & \leq & \dfrac{1}{2}|Q_a|
            & \textnormal{Opposite Lines}.
        \end{array}
        \right.
    \end{equation*}
\end{theorem}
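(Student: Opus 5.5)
The plan follows the proof of Theorem \ref{theorem:Region_5}. By Lemmas \ref{lem:QCirclesMinimizers}, \ref{lemma:SPE}, \ref{lem:no_horns}, \ref{lemma:DARE} and \ref{lem:numberOfBoundaryComponents}, a minimizer must be one of the six region types of Figure \ref{fig:sixMinRegions}. For each area $A$ it therefore suffices to compare the six perimeters on their area domains $\mathcal{D}$. Since $a<a_2<a_3=\frac{4-\pi}{4+3\pi}$, Lemma \ref{lemma:PCL-QC<ASP} removes Adjacent Lines regions entirely. On $\left[\frac{1-a}{2},\frac12|Q_a|\right]$, Lemma \ref{lemma:PCL-OSP_Final} gives Opposite Lines directly. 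What remains is to order the breakpoints and check comparisons on each interval:
\[
    \left(\tfrac{1-a}{2}\right)^2+a(1-a) < \mu_a \le \gamma_a \le \lambda_a < \tfrac{1-a}{2}.
\]

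\emph{Ordering the breakpoints.} Because $a\ge a_1$, Lemma \ref{lem:a1_exists} gives $\sqrt{\pi\mu_a}\le 1=P_{OA}(\gamma_a)$. By Lemma \ref{lem:mu_exists}, $P_{OA}(\mu_a)=\sqrt{\pi\mu_a}$, and $P_{OA}$ is increasing by Lemma \ref{lem:IFL-OHincreasing}; together these yield $\mu_a\le\gamma_a$. For $\gamma_a\le\lambda_a$, both are defined by $\theta$ solving $(1-a)\theta\csc\theta=1$, so at the same $\theta$ we have $\lambda_a-\gamma_a=A_{AA}-A_{OA}=-F(\theta)$, where $F$ is as in the proof of Lemma \ref{lemma:IFL-Unique_a2}. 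Since $F$ is increasing in $\theta$, and the relation $a=1-\sin\theta/\theta$ makes $\theta$ increasing in $a$, we get $F<0$ exactly for $a<a_2$, hence $\gamma_a<\lambda_a$. Equivalently, by the symmetry in Lemma \ref{lem:IFL-Symmetry1}, $\gamma_a\le\lambda_a$ iff $\gamma_a\le\frac38(1-a^2)$, which should be monotone in $a$ by Lemma \ref{lemma:IFL-Change}. I expect this ordering step, in particular tying $a_2$ cleanly to the sign of $F$, to be the main obstacle.

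\emph{Interval comparisons.}
\begin{itemize}
    \item On $[0,\mu_a]$: Quarter Circle beats Opposite Arcs by Lemma \ref{lem:mu_exists}. It beats Full Line since $\sqrt{\pi A}\le\sqrt{\pi\mu_a}\le1$. It beats Adjacent Arcs because $A<\lambda_a$ forces $P_{AA}(A)>1$, using Lemma \ref{lem:IFL-OHincreasing}.
    \item On $[\mu_a,\gamma_a]$: Opposite Arcs beats Quarter Circle by Lemma \ref{lem:mu_exists}. Here one must also check $\gamma_a\le\frac{\pi}{8}(1-a)^2$, or else note that Quarter Circle regions do not exist beyond that area, and that $\sqrt{\pi A}$ stays $\ge P_{OA}$ past $\frac{\pi}{8}(1-a)^2$ regardless. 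Opposite Arcs beats Full Line since $P_{OA}(A)\le P_{OA}(\gamma_a)=1$, and it beats Adjacent Arcs since $P_{AA}(A)>1$ for $A<\lambda_a$.
    \item On $[\gamma_a,\lambda_a]$: Full Line has perimeter $1$. Opposite Arcs has $P_{OA}\ge1$ by monotonicity, Adjacent Arcs has $P_{AA}\ge1$, and Quarter Circle has $\sqrt{\pi A}\ge\sqrt{\pi\mu_a}$. This last bound alone is insufficient, since $\sqrt{\pi\mu_a}\le1$. Instead, I will use that $\sqrt{\pi A}>P_{OA}(A)\ge1$ wherever both are defined beyond $\mu_a$ (Lemma \ref{lem:mu_exists}). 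Where Opposite Arcs is not defined, $A$ exceeds the quarter-circle capacity anyway, or $\sqrt{\pi A}>1$ because $A>\frac1\pi$.
    \item On $[\lambda_a,\frac{1-a}{2})$: argue exactly as in Theorem \ref{theorem:Region_5}, using $P_{AA}\le1$ together with $P_{OA}>1$ for $A>\gamma_a$.
\end{itemize}
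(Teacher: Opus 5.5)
Your proposal is correct and follows the paper's proof. Both remove Adjacent Lines via Lemma \ref{lemma:PCL-QC<ASP}, settle $[\frac{1-a}{2},\frac12|Q_a|]$ by Lemma \ref{lemma:PCL-OSP_Final}, and on each remaining interval eliminate competitors with Lemmas \ref{lem:mu_exists}, \ref{lem:a1_exists}, \ref{lem:IFL-OHincreasing} and the defining properties of $\gamma_a,\lambda_a$; on $[\gamma_a,\lambda_a]$ the paper likewise excludes Quarter Circles via $\sqrt{\pi A}>P_{OA}(A)\ge 1$. Your explicit ordering $\mu_a\le\gamma_a<\lambda_a$ (via the function $F$ of Lemma \ref{lemma:IFL-Unique_a2}, or equivalently symmetry plus Lemma \ref{lemma:IFL-Change}) and your capacity caveat on $[\mu_a,\gamma_a]$ are both valid, and they spell out steps the paper asserts only in passing. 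The caveat is warranted: $\gamma_a$ can exceed the quarter-circle capacity $\frac{\pi}{8}(1-a)^2$.
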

\begin{proof}
    From Lemma \ref{lemma:PCL-QC<ASP}, no Adjacent Arc regions will appear. Thus, it suffices to find the minimum of the five remaining regions.

    Let $A \in [0,\mu_a]$. By Lemmas  \ref{lem:mu_exists} and \ref{lem:a1_exists}, we have that $\sqrt{\pi A} \leq P_{OA}(A)$. Thus, no Opposite Arc regions will appear. From Lemma \ref{lem:a1_exists}, we have that
    \[
        P_{QC}(A) = \sqrt{\pi A} \leq \sqrt{\pi \mu_a} < 1 = P_{FL}(A).
    \]
    Thus, Full Line regions are eliminated. Since $\mu_a < \lambda_a$, then by Lemma \ref{lem:IFL-OHincreasing} $P_{AA}(A) > 1$. So no Adjacent Arc regions need to be considered. Finally, $A \notin \mathcal{D}_{OL}$. Hence, Quarter Circle regions are minimum regions.

    Let $A \in [\mu_a, \gamma_a]$. By Lemma \ref{lem:mu_exists}, we have $P_{OA}(A) \leq \sqrt{\pi A} = P_{QC}(A)$, which eliminates Quarter Circle regions. By Lemmas \ref{lem:IFL-OHincreasing} and \ref{lem:gamma_exists}, we have $P_{OA}(A) \leq 1 = P_{FL}(A)$, which eliminates Full Line regions. By Lemma \ref{lemma:IFL-Change}, $A < \lambda_a$. Then by Lemma \ref{lem:IFL-OHincreasing}, $P_{AA}(A) > 1$. This eliminates Adjacent Arc regions. Finally, $A \notin \mathcal{D}_{OL}$. Hence, Opposite Arc regions are minimum.

    Let $A \in [\gamma_a,\lambda_a]$. From Lemmas \ref{lem:IFL-OHincreasing} and \ref{lem:gamma_exists}, we have that $P_{OA}(A) \geq 1$, which eliminates Opposite Arc regions. By Lemma \ref{lem:mu_exists}, we have that $P_{OA}(A) < P_{QC}(A)$. This also eliminates Quarter Circle regions. From Lemmas \ref{lem:IFL-OHincreasing} and \ref{lemma:IFL-FLR2IAHIntersection}, we have that $P_{AA}(A) \geq 1$, which eliminates Adjacent Arc regions. Finally, $A \notin \mathcal{D}_{OL}$. Hence, Full Line regions are minimum.

    Let $A \in [\lambda_a, \frac{1-a}{2})$. By Lemma \ref{lem:Lambda_Greater_1overPi}, $\lambda_a > \frac{1}{\pi}$. Thus,
    \[
        P_{QC}(A) = \sqrt{\pi A} \geq \sqrt{\pi \lambda_a} > \sqrt{\pi \cdot \frac{1}{\pi}} = 1.
    \]
    This eliminates Quarter Circle regions. Since $\lambda_a \geq \gamma_a$, then by Lemmas \ref{lem:IFL-OHincreasing} and \ref{lem:gamma_exists}, we have $P_{OA}(A) > 1$. This eliminates Opposite Arc regions. By Lemmas \ref{lem:IFL-OHincreasing} and \ref{lemma:IFL-FLR2IAHIntersection}, we have $P_{AA}(A) \leq 1 = P_{FL}(A)$. Thus, Full Line regions are eliminated. Finally, $A \notin \mathcal{D}_{OL}$. Hence, Adjacent Arc regions are minimum.

    Let $A \in [\frac{1-a}{2},\frac{1}{2}|Q_a|]$. By Lemma \ref{lemma:PCL-OSP_Final}, Opposite Line regions are minimum. This concludes the proof.
\end{proof}

\begin{theorem}\label{theorem:Region_3}
   For all $a \in [a_2,\frac{4-\pi}{4+3\pi})$, the relative isoperimetric profile function, $f_a:\left[0,\frac{1}{2}|Q_a|\right] \to \mathbb{R}$, is given by
    \begin{equation*}
        f_a(A)=
        \left\{
        \begin{array}{c@{\,,\quad}c@{\;}c@{\;}c@{\;}c@{\;}c@{\quad-\quad}l}
            \sqrt{\pi A}
            & 0 & \leq & A & \leq & \mu_a
            & \textnormal{Quarter Circle}\\[0.6em]
    
            P_{OA}(A)
            & \mu_a & \leq & A & \leq & \dfrac{3}{8}(1-a^2)
            & \textnormal{Opposite Arcs}\\[0.6em]
    
            P_{AA}(A)
            & \dfrac{3}{8}(1-a^2) & \leq & A & < & \dfrac{1-a}{2}
            & \textnormal{Adjacent Arcs}\\[0.6em]
    
            1-a
            & \dfrac{1-a}{2} & \leq & A & \leq & \dfrac{1}{2}|Q_a|
            & \textnormal{Opposite Lines}.
        \end{array}
        \right.
    \end{equation*}
\end{theorem}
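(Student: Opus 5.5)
We follow the template of Theorems \ref{theorem:Region_5} and \ref{theorem:Region_4}. First, since $a<\frac{4-\pi}{4+3\pi}$, Lemma \ref{lemma:PCL-QC<ASP} shows that Adjacent Lines regions never beat Quarter Circles, so only the Quarter Circle, Opposite Arcs, Full Line, Adjacent Arcs and Opposite Lines regions remain. The interval $[\frac{1-a}{2},\frac12|Q_a|]$ is handled directly by Lemma \ref{lemma:PCL-OSP_Final}. The new ingredient compared with Theorem \ref{theorem:Region_4} is that Full Line regions disappear: for $a\ge a_2$ we want $\lambda_a\le\gamma_a$.

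\textbf{Key step.} We prove $\lambda_a\le \gamma_a$ for all $a\in[a_2,\frac{4-\pi}{4+3\pi})$. By Lemma \ref{lem:IFL-Symmetry1}, $P_{OA}$ and $P_{AA}$ are reflections of each other about $A^*=\frac38(1-a^2)$. Since $P_{OA}(\gamma_a)=1=P_{AA}(\lambda_a)$, the symmetry gives $\gamma_a+\lambda_a=2A^*$, and the two arc profiles cross exactly at $A^*$.

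\begin{itemize}
\item At $a=a_2$, Lemma \ref{lemma:IFL-Unique_a2} gives $\gamma_{a_2}=\lambda_{a_2}=A^*$.
\item By Lemma \ref{lemma:IFL-Change}, $\gamma_a$ is increasing in $a$.
\item Since $\gamma_a-A^*=\frac{\gamma_a-\lambda_a}{2}$ and $A^*$ is decreasing in $a$, the quantity $\gamma_a-A^*$ is strictly increasing, so $\gamma_a\ge A^*\ge\lambda_a$ for $a\ge a_2$.
\end{itemize}

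Consequently, on $[\mu_a,A^*]$ we get $P_{OA}(A)\le P_{OA}(A^*)=P_{AA}(A^*)\le 1$, using Lemma \ref{lem:IFL-OHincreasing} and $A^*\le\gamma_a$. On $[A^*,\frac{1-a}{2})$, likewise $P_{AA}(A)\le 1$. So Full Line regions are never strictly better. We also need $\mu_a\le A^*$; this follows since $\mu_a<\frac{\pi}{8}(1-a)^2$ (by construction in Lemma \ref{lem:mu_exists}), which is below $A^*$ for small $a$.

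\textbf{Case analysis.}

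\begin{itemize}
\item \emph{$A\in[0,\mu_a]$.} Lemma \ref{lem:mu_exists} gives $\sqrt{\pi A}\le P_{OA}(A)$. By Lemma \ref{lem:a1_exists}, $\sqrt{\pi A}\le\sqrt{\pi\mu_a}<1$, which beats both Full Line and Adjacent Arcs (the latter have perimeter $>1-a$, and in fact $\ge1$ when $A\le\lambda_a$). So Quarter Circles win.
\item \emph{$A\in[\mu_a,A^*]$.} Opposite Arcs beat Quarter Circles: Lemma \ref{lem:mu_exists} covers $A\le\frac{\pi}{8}(1-a)^2$. Beyond that, $\sqrt{\pi A}$ exceeds $\sqrt{\pi\mu_a}=P_{OA}(\mu_a)$ while $P_{OA}$ stays at most $1$; we need $\sqrt{\pi A}\ge P_{OA}(A)$ there. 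Lemma \ref{lem:mu_exists} gives this only up to $\frac{\pi}{8}(1-a)^2$, and beyond that we compare $P_{OA}(A)\le1$ with $\sqrt{\pi A}$ when $A\ge 1/\pi$. Opposite Arcs beat Adjacent Arcs by the symmetry, since $P_{AA}(A)=P_{OA}(2A^*-A)\ge P_{OA}(A)$ when $A\le A^*$. Opposite Lines do not apply, since $A<\frac{1-a}{2}$.
\item \emph{$A\in[A^*,\frac{1-a}{2})$.} Symmetrically, Adjacent Arcs beat Opposite Arcs. Quarter Circles are excluded since $\sqrt{\pi A}\ge\sqrt{\pi\lambda_a}>1$ by Lemma \ref{lem:Lambda_Greater_1overPi}, using $A^*\ge\lambda_a$.
\end{itemize}

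\textbf{Main obstacle.} The hardest step is the Quarter Circle versus Opposite Arcs comparison on the window $(\frac{\pi}{8}(1-a)^2,1/\pi)$, if that window is nonempty inside $[\mu_a,A^*]$. There $\sqrt{\pi A}<1$, so bounding $P_{OA}(A)\le1$ does not suffice. We would first try the concavity argument from the proof of Lemma \ref{lem:mu_exists}: $D(\theta)=P_{OA}^2-\pi A_{OA}$ is decreasing on $(0,\frac{\pi}{4})$, and we would check that it stays negative up to the $\theta$ corresponding to $A^*$. Alternatively, a Quarter Circle of area above $\frac{\pi}{8}(1-a)^2$ does not fit in the corner, so it is not an admissible competitor and the comparison is moot. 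That second observation likely resolves the issue immediately.
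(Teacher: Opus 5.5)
Your proposal is correct and follows the paper's proof essentially step for step. You use the same partition at $\mu_a$, $\frac{3}{8}(1-a^2)$ and $\frac{1-a}{2}$, and the same key inequality $\gamma_a\ge\gamma_{a_2}=\frac{3}{8}(1-a_2^2)\ge\frac{3}{8}(1-a^2)$ from Lemmas \ref{lemma:IFL-Change} and \ref{lem:IFL-Symmetry1}, together with the same symmetry and monotonicity comparisons; you are even slightly more careful than the paper, for example in treating Adjacent Arcs on $[0,\mu_a]$. Your ``main obstacle'' dissolves exactly as you suspect, since Quarter Circle regions only exist on $\mathcal{D}_{QC}=[0,\frac{\pi}{8}(1-a)^2]$ (the paper's ``$A\notin\mathcal{D}_{QC}$''), and that window is in any case empty for $a<1-\frac{2\sqrt2}{\pi}$; just correct the slip that $\frac{\pi}{8}(1-a)^2<\frac{3}{8}(1-a^2)$ holds for $a>\frac{\pi-3}{\pi+3}$, which covers $a\ge a_2$, rather than ``for small $a$''.
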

\begin{proof}
    From Lemma \ref{lemma:PCL-QC<ASP}, we know that no Adjacent Line regions will appear. Assume that $A \in [0,\mu_a]$. Within this interval of areas, we must find the minimum of Quarter Circle regions, Full Line regions, and Opposite Arc regions. Then from Lemma \ref{lem:mu_exists}, we have that $\sqrt{\pi A} < P_{OA}(A)$, which eliminates Opposite Arc regions. From Lemma \ref{lem:a1_exists}, we have that $\sqrt{\pi A} \leq \sqrt{\pi \mu_a} < 1$, which eliminates Full Line regions. Hence, Quarter Circle regions must be minimum.

    Let $A \in [\mu_a, \frac{3}{8}(1-a)^2]$. From Lemma \ref{lem:mu_exists}, we have that $P_{OA}(A) < \sqrt{\pi A}$, which eliminates Quarter Circle regions. By the symmetry in Lemma \ref{lem:IFL-Symmetry1} and by Lemma \ref{lem:IFL-OHincreasing}, we have that $P_{OH}(A) \leq P_{AA}(A)$. This eliminates Adjacent Arc regions. Note that by Lemma \ref{lem:IFL-Symmetry1}$, \gamma_{a_2} = \frac{3}{8}(1-a^2)$. Then by Lemma \ref{lemma:IFL-Change}, for all $a \in [a_2,\frac{1}{3})$, we have that $\gamma_a \geq \frac{3}{8}(1-a^2)$. Since $P_{OA}(\gamma_a)=1$ by definition, and since $P_{OA}(A)$ is always increasing, we know that $P_{OA}(A) \leq 1$ in this area region. This eliminates the possibility of Full Line regions. Finally, $A \notin \mathcal{D}_{OL}$. Hence, Opposite Arc regions are minimum.
    
    Let $A \in [\frac{3}{8}(1-a^2), \frac{1-a}{2})$. From the symmetry established in Lemma \ref{lem:IFL-Symmetry1}, we have that $P_{AA}(A) \leq P_{OA}(A)$, eliminating Opposite Arc regions. This symmetry also gives us that $P_{AA}(A) \leq 1$, which eliminates Full Line regions. Finally, $A \notin \mathcal{D}_{QC}$ and $A \notin \mathcal{D}_{OL}$. Hence, Adjacent Arc regions are minimum.

    Let $A \in [\frac{1-a}{2},\frac{1}{2}|Q_a|]$. By Lemma \ref{lemma:PCL-OSP_Final}, Opposite Line regions are minimum. This concludes the proof.
\end{proof}

\begin{theorem}\label{theorem:Region_2} 
    For all $a \in [\frac{4-\pi}{4+3\pi},\frac{1}{3})$, the relative isoperimetric profile function, $f_a:\left[0,\frac{1}{2}|Q_a|\right] \to \mathbb{R}$, is given by
    \begin{equation*}
        f_a(A)=
        \left\{
        \begin{array}{c@{\,,\quad}c@{\;}c@{\;}c@{\;}c@{\;}c@{\quad-\quad}l}
            \sqrt{\pi A}
            & 0 & \leq & A & \leq & \dfrac{(1-a)^2}{\pi} &
            \textnormal{Quarter Circle}\\[0.6em]
            1-a
            & \dfrac{(1-a)^2}{\pi} & \leq & A & \leq & K &
            \textnormal{Adjacent Lines}\\[0.6em]
            P_{OA}(A)
            & K & \leq & A & \leq & \dfrac{3}{8}(1-a^2) &
            \textnormal{Opposite Arcs}\\[0.6em]
            P_{AA}(A)
            & \dfrac{3}{8}(1-a^2) & \leq & A & \leq & \dfrac{1-a}{2} &
            \textnormal{Adjacent Arcs}\\[0.6em]
            1-a
            & \dfrac{1-a}{2} & \leq & A & \leq & \dfrac{1}{2}|Q_a| &
            \textnormal{Opposite Lines},
        \end{array}
        \right.
    \end{equation*}
    where $K = \left(\frac{1-a}{2}\right)^2 + a(1-a)$.
\end{theorem}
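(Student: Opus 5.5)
The plan is to argue exactly as in the proofs of Theorems \ref{theorem:Region_3} and \ref{theorem:Region_4}. By Section \ref{sec:region_elimination} only the six region types of Figure \ref{fig:sixMinRegions} can occur, so on each sub-interval of $[0,\frac12|Q_a|]$ we compare the profiles of the types whose area domains contain $A$. The one new feature here is $a\ge a_3=\frac{4-\pi}{4+3\pi}$. Then Lemma \ref{lemma:PCL-QC<ASP} no longer applies, and Adjacent Lines regions appear instead of the $\mu_a$ crossover.

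The cases are as follows.

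\textbf{$A\in[0,\frac{(1-a)^2}{\pi}]$.} Lemma \ref{lemma:PCL-QC_First} gives Quarter Circles directly.

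\textbf{$A\in[\frac{(1-a)^2}{\pi},K]$.} This interval is nonempty because $a\ge a_3$, by reversing the chain of inequalities in Lemma \ref{lemma:PCL-QC<ASP}. Lemma \ref{lemma:PCL-ASP_Minimizer} shows Adjacent Lines are minimal. Its list of competitors omits Opposite Arcs, but $A\le K$ lies outside $\mathcal{D}_{OA}$, so nothing is lost.

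\textbf{$A\in[K,\frac38(1-a^2)]$.} Here the candidates are Quarter Circle (only if $A\le\frac{\pi}{8}(1-a)^2$), Full Line, Opposite Arcs and Adjacent Arcs.
\begin{itemize}
\item Quarter Circles are beaten because $A>\frac{(1-a)^2}{\pi}$ gives $\sqrt{\pi A}>1-a$, while by Lemmas \ref{lem:perimeter_limits} and \ref{lem:IFL-OHincreasing} $P_{OA}(A)$ starts at $1-a$. I still need $P_{OA}(A)\le\sqrt{\pi A}$. I would get it from the function $D(\theta)=P_{OA}^2-\pi A_{OA}$ in Lemma \ref{lem:mu_exists}: for $a\ge a_3$, $\lim_{\theta\to0^+}D\le0$ and $D$ is decreasing on $(0,\frac\pi4)$, so $D<0$ there. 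For $\theta\ge\frac\pi4$ we have $A\ge\frac\pi8(1-a)^2$, so Quarter Circles are unavailable.
\item Adjacent Arcs are beaten by the reflection symmetry of Lemma \ref{lem:IFL-Symmetry1} together with the monotonicity in Lemma \ref{lem:IFL-OHincreasing}, which give $P_{OA}(A)\le P_{AA}(A)$ for $A\le\frac38(1-a^2)$.
\item Full Line is beaten by $P_{OA}(A)\le1$. This follows from Lemma \ref{lemma:IFL-Change}: $\gamma_a\ge\gamma_{a_2}=\frac38(1-a_2^2)$, and I need this to dominate $\frac38(1-a^2)$. Lemma \ref{lemma:IFL-Unique_a2} puts the triple point at the symmetric area when $a=a_2$. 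I would show $\gamma_a\ge\frac38(1-a^2)$ for $a\ge a_2$ by noting that the right side decreases in $a$ while $\gamma_a$ increases.
\item Opposite Lines need not be considered, since $\frac38(1-a^2)<\frac{1-a}{2}$ exactly when $a>\frac13$ fails. More precisely, $3(1+a)<4\iff a<\frac13$, so the interval $[K,\frac38(1-a^2)]$ lies outside $\mathcal{D}_{OL}$.
\end{itemize}

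\textbf{$A\in[\frac38(1-a^2),\frac{1-a}{2})$.} Symmetry gives $P_{AA}\le P_{OA}$ and $P_{AA}\le1$. Quarter Circles are excluded because the area exceeds their domain: $\frac38(1-a^2)\ge\frac\pi8(1-a)^2$ holds since $3(1+a)\ge\pi(1-a)$. Adjacent Lines are excluded since $A>K$.

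\textbf{$A\ge\frac{1-a}{2}$.} Lemma \ref{lemma:PCL-OSP_Final} gives Opposite Lines.

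The main obstacle is the Opposite Arcs interval. There one must compare Opposite Arcs against Quarter Circles for $a\ge a_3$, which is outside the range of Lemma \ref{lem:mu_exists}, and against Full Line via the monotonicity of $\gamma_a$ relative to $\frac38(1-a^2)$. I expect both to reduce to the sign and monotonicity facts already computed for $D$ and $\gamma_a$.
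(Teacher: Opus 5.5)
Your proposal is correct and follows the paper's proof. It uses the same decomposition into five area intervals, with Lemmas \ref{lemma:PCL-QC_First}, \ref{lemma:PCL-ASP_Minimizer} and \ref{lemma:PCL-OSP_Final} at the ends, and the symmetry and $\gamma_a$-monotonicity arguments of Theorem \ref{theorem:Region_3} in the middle. You are in fact more careful than the paper on the Opposite Arcs interval. The paper asserts that only Opposite Arc, Adjacent Arc and Full Line regions need to be compared there, but for $a\in[a_3,\frac{\pi-2}{\pi+6})$ one has $K<\frac{\pi}{8}(1-a)^2$, so Quarter Circles are still available on part of that interval. Your observation that $D(\theta)<0$ on $(0,\frac{\pi}{4})$ whenever $a\ge a_3$ is exactly what is needed to rule them out.
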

\begin{proof}
    If $A \in [0,\frac{1}{\pi}(1-a)^2]$, then from Lemma \ref{lemma:PCL-QC_First}, we have that $P_{QC}(A)=\sqrt{\pi A}$ is a minimizer. If instead $A \in [\frac{1}{\pi}(1-a), K]$, then by Lemma \ref{lemma:PCL-ASP_Minimizer}, we have that $P_{AL}(A) = 1-a$ is a minimizer.

    Now suppose $A \in [K,\frac{3}{8}(1-a^2)]$. Within these area bounds we only need to consider Opposite Arc regions, Adjacent Arc regions, and Full Line regions. From the symmetry established in Lemma \ref{lem:IFL-Symmetry1}, and by utilizing the same arguments as in Theorem \ref{theorem:Region_3}, we have that Opposite Arc regions are minimal. Similarly, if $A \in [\frac{3}{8}(1-a^2),\frac{1}{2}(1-a)]$, then Adjacent Arc regions are minimal.

    Finally, suppose $A \in [\frac{1}{2}(1-a),\frac{1}{2}|Q_a|]$. Then by Lemma \ref{lemma:PCL-OSP_Final}, Opposite Line regions are minimum. This concludes the proof.
\end{proof}

\begin{theorem}\label{theorem:Region_1}
    For all $a \in [\frac{1}{3},1)$, the relative isoperimetric profile function, $f_a:\left[0,\frac{1}{2}|Q_a|\right] \to \mathbb{R}$, is given by
    \begin{equation*}
        f_a(A)=
        \left\{
        \begin{array}{c@{\,,\quad}c@{\;}c@{\;}c@{\;}c@{\;}c@{\quad-\quad}l}
            \sqrt{\pi A}
            & 0 & \leq & A & \leq & \dfrac{(1-a)^2}{\pi} &
            \textnormal{Quarter Circle}\\[0.6em]
            1-a
            & \dfrac{(1-a)^2}{\pi} & \leq & A & \leq & \frac{1}{2}|Q_a| &
            \textnormal{Adjacent/Opposite Lines}.
        \end{array}
        \right.
    \end{equation*}
\end{theorem}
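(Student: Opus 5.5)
The plan is to split the interval $[0,\frac12|Q_a|]$ into two pieces at $A=\frac{(1-a)^2}{\pi}$. On each piece I will compare the six candidate regions of Figure \ref{fig:sixMinRegions}, which by Lemmas \ref{lem:QCirclesMinimizers}, \ref{lemma:SPE}, \ref{lemma:DARE}, \ref{lem:no_horns} and \ref{lem:numberOfBoundaryComponents} are the only possible minimizers.

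On the first piece, $A\in[0,\frac{(1-a)^2}{\pi}]$, I will cite Lemma \ref{lemma:PCL-QC_First} directly. It gives $P_{QC}(A)=\sqrt{\pi A}\le 1-a$. Every other candidate has perimeter at least $1-a$:
\begin{itemize}
\item full line regions have perimeter $1$;
\item adjacent and opposite line regions have perimeter $1-a$;
\item opposite and adjacent arc regions have perimeter in $(1-a,\frac{\pi}{2}(1-a))$ by Lemma \ref{lemma:IFL-Bisection}.
\end{itemize}
So the quarter circle is minimal on this piece.

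On the second piece, $A\in[\frac{(1-a)^2}{\pi},\frac12|Q_a|]$, I will first show that perimeter $1-a$ is always attained. This is exactly the covering argument in the proof of Lemma \ref{lem:boundedBy1MinusA}, and it uses the hypothesis $a\ge\frac13$.
\begin{itemize}
\item For $A\le K=\left(\frac{1-a}{2}\right)^2+a(1-a)$: since $\frac{(1-a)^2}{\pi}>\frac{(1-a)^2}{4}$, the area $A$ lies in $\mathcal{D}_{AL}$. This is also the content of Lemma \ref{lemma:PCL-ASP_Minimizer}, which applies because $\frac13\ge\frac{4-\pi}{4+3\pi}$.
\item For $A\ge K$: the condition $a\ge\frac13$ gives $K\ge\frac{1-a}{2}$, so $A\in\mathcal{D}_{OL}$. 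Lemma \ref{lemma:PCL-OSP_Final} then applies on $[\frac{1-a}{2},\frac12|Q_a|]$.
\end{itemize}
Hence $\mathcal{D}_{AL}\cup\mathcal{D}_{OL}$ covers the whole piece, and $f_a(A)\le 1-a$ there.

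It remains to rule out anything smaller than $1-a$ on the second piece.
\begin{itemize}
\item Quarter circles give $\sqrt{\pi A}\ge 1-a$.
\item Full lines give $1>1-a$.
\item Opposite and adjacent arcs give strictly more than $1-a$, by Lemmas \ref{lem:IFL-OHincreasing} and \ref{lem:perimeter_limits}.
\end{itemize}
The profile value is therefore exactly $1-a$. At $A=\frac{(1-a)^2}{\pi}$ the two formulas agree, so the endpoint is consistent.

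The only delicate step is the coverage inequality $K\ge\frac{1-a}{2}\iff a\ge\frac13$. It is also what distinguishes this range from Theorem \ref{theorem:Region_2}: below $\frac13$, the arc regions fill the gap between $K$ and $\frac{1-a}{2}$. Everything else is a direct citation of earlier lemmas.
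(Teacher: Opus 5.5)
Your proposal is correct and follows the paper's proof: you split at $A=\frac{(1-a)^2}{\pi}$, apply Lemma \ref{lemma:PCL-QC_First} on the first piece, and on the second piece use that $a\ge\frac13$ gives $K\ge\frac{1-a}{2}$, so $\mathcal{D}_{AL}\cup\mathcal{D}_{OL}$ is a connected interval on which perimeter $1-a$ is attained. The only difference is that you spell out why no other candidate region drops below $1-a$, which the paper leaves implicit.
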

\begin{proof}   
    If $A\in [0\ ,\frac{1}{\pi}(1-a)^2]$, then by Lemma \ref{lemma:PCL-QC_First}, $P_{QC}(A) = \sqrt{\pi A}$ is minimal. If on the other hand $A \in [\frac{1}{\pi}(1-a)^2,\frac{1}{2}|Q_a|]$, then we have
    \[
        \frac{(1-a)^2}{4} < A \leq \frac{1}{2}|Q_a|.
    \]
    Furthermore, since $a \geq \frac{1}{3}$, we have that $\mathcal{D}_{AL} \cup \mathcal{D}_{OL}$ is a connected interval. Hence, $P_{AL}(A) = P_{OL}(A) = 1-a$ is minimal.
\end{proof}

\newpage

\section*{Data Availability}
No data was used for the research described in this article.

\section*{Declaration of Competing Interest}
The authors state that there are no conflicts of interest.

\section*{Acknowledgments}
This work was conducted as part of the Mathematical Association of America's National Research Experiences for Undergraduates Program (NREUP) at the University of Tennessee at Martin, supported by funding from the National Science Foundation under award DMS-2308688. The authors also thank the program director, Dr. Jason DeVito, the Department of Mathematics and Statistics, and the Office of Research and Sponsored Programs at the University of Tennessee at Martin for the resources provided during this research.

\bibliographystyle{plain}
\bibliography{biblio}
\end{document}